\theoremstyle{plain}
\newtheorem{prop}{Proposition}[section]
\newtheorem{lem}{Lemma}[section] \newtheorem{cor}{Corollary}[section]
\newtheorem{defi}{Definition}[section]
\theoremstyle{remark}
\newtheorem{rmk}{Remark}
\newcommand {\R} {\mathbb{R}}  \newcommand {\Q} {\mathbb{Q}}
 \newcommand {\N} {\mathbb{N}}
\newcommand {\C} {\mathbb{C}} 
\newcommand {\p} {\partial}
\newcommand {\dt} {\partial_t}
\newcommand {\drr} {\partial_r}
\newcommand {\va} {\varphi}
\newcommand {\D} {\Delta}
\newcommand {\supp} {\text{supp}}
\DeclareMathOperator{\vol}{vol}
\DeclarePairedDelimiter{\floor}{\lfloor}{\rfloor}
\begin{document}

\title[Quantitative Unique Continuation]{On Some Quantitative Unique Continuation Properties of Fractional Schrödinger Equations: Doubling, Vanishing Order and Nodal Domain Estimates}

\author{Angkana Rüland}
\address{Mathematical Institute of the University of Oxford, Andrew Wiles Building, Radcliffe Observatory Quarter, Woodstock Road, OX2 6GG Oxford, United Kingdom }
\email{ruland@maths.ox.ac.uk}

\begin{abstract}
In this article we determine bounds on the maximal order of vanishing for eigenfunctions of a generalized Dirichlet-to-Neumann map (which is associated with fractional Schrödinger equations) on a compact, smooth Riemannian manifold, $(M,g)$, without boundary. Moreover, with only slight modifications these results generalize to equations with $C^1$ potentials. Here Carleman estimates are a key tool. These yield a quantitative three balls inequality which implies quantitative bulk and boundary doubling estimates and hence leads to the control of the maximal order of vanishing. Using the boundary doubling property, we prove upper bounds on the $\mathcal{H}^{n-1}$-measure of nodal domains of eigenfunctions of the generalized Dirichlet-to-Neumann map on analytic manifolds.
\end{abstract}
\keywords{Quantitative unique continuation, fractional Schrödinger equations, Carleman estimates, eigenfunctions, nodal domains}
\subjclass[2010]{35R11, 35A02, 58J02}

\maketitle
\tableofcontents

\section{Introduction}

Motivated by trying to improve the understanding of the behavior of the fractional Laplacian, this article is dedicated to quantitative unique continuation results for generalized Dirichlet-to-Neumann maps on smooth, $n$-dimensional compact Riemannian manifolds, $(M,g)$, without boundary. Considering solutions of the (degenerate) elliptic Caffarelli-Silvestre extension \cite{CaS} 
\begin{equation}
\label{eq:CSE}
\begin{split}
\p_{n+1}y_{n+1}^{1-2s}\p_{n+1}\bar{w} + y_{n+1}^{1-2s}\D_{g}\bar{w} + y_{n+1}^{1-2s}b\cdot \nabla \bar{w} + y_{n+1}^{1-2s}c \bar{w} &= 0 \mbox{ on } M \times \R_+,\\
\bar{w} &= w \mbox{ on } M \times \{0\}, 
\end{split}
\end{equation} 
where $s\in (0,1)$, $b$ is a smooth vector field and $c$ a smooth function on $M\times \R_+$,
we are interested in the behavior of eigenfunctions of the associated generalized Dirichlet-Neumann-map, i.e. of functions which satisfy $\Lambda(w)= \lambda w$, where
\begin{equation}
\label{eq:GDN}
\begin{split}
\Lambda: H^{s}_g(M) &\rightarrow H^{-s}_g(M),\\
w & \mapsto -\lim\limits_{y_{n+1}\rightarrow 0} y_{n+1}^{1-2s} \p_{n+1} \bar{w}.
\end{split}
\end{equation}
Important examples, in which these Dirichlet-to-Neumann maps play a significant role, include nonlocal powers of the Laplacian in different guises, c.f. \cite{CaS}, \cite{BL14}. \\

Two very natural quantities associated with eigenfunctions of elliptic operators on manifolds are the eigenfunctions' vanishing order and nodal domains. We will study these for our class of non-local operators in the sequel.
Whereas in unbounded domains these eigenfunctions and, more generally, also solutions of $\Lambda(w)= V w$ with, for instance, $C^1$ potentials $V$ are expected to vanish of possibly arbitrarily high order, this is no longer true on compact, smooth Riemannian manifolds without boundary. Here the vanishing order is strongly influenced by the respective potential, e.g. the strength of the eigenvalue, the eigenfunctions are expected to display ``polynomial behavior in $\lambda$''. Similarly, the measure of the nodal domains is controlled by the size of the eigenvalue. \\ 

This behavior of our class of non-local eigenfunctions is in close analogy to the known results on eigenfunctions of the classical Laplacian on compact Riemannian manifolds first established by Donnelly and Fefferman in their seminal paper \cite{DoF}. In \cite{DoF} the authors prove that at any point $p \in M$ of a compact Riemannian manifold without boundary an eigenfunction of the Laplacian with eigenvalue $\lambda$, i.e. a function such that
\begin{align*}
(-\D_g)w = \lambda w \mbox{ on } M,
\end{align*}
satisfies an estimate of the type
\begin{align*}
\left\| w\right\|_{L^2(B_{2r}(p))} \lesssim e^{C(\lambda^{\frac{1}{2}}+1)} \left\| w\right\|_{L^2(B_{r}(p))},
\end{align*}
where all involved constants only depend on the manifold $(M,g)$. Moreover, they give a complete answer to a famous conjecture of Yau \cite{Y82}, which in the two-dimensional case had already previously been pioneered by Brüning \cite{B}, in the setting of compact analytic manifolds without boundary: For this class of manifolds the $\mathcal{H}^{n-1}$ measure of the nodal set of eigenfunctions of the classical Laplacian scales like $\lambda^{\frac{1}{2}}$:
\begin{align*}
\lambda^{\frac{1}{2}} \lesssim \mathcal{H}^{n-1}\left\{x \in M \big| \  w(x)=0  \right\} \lesssim \lambda^{\frac{1}{2}}.
\end{align*}
Both of these results have been subject of intensive research in various settings involving for example rougher manifolds and equations, c.f. \cite{JK}, \cite{KT1}, \cite{GL}, \cite{DoF}, \cite{KukaI}, \cite{Bakri}, \cite{Da}, \cite{HS}, \cite{SZ11}, \cite{CM11}, \cite{HS11} , \cite{HW12}, \cite{S13}.\\

In the sequel we prove an analogous bound on the vanishing order of eigenfunctions of the previously introduced generalized Dirichlet-to-Neumann map as well as similar upper bounds on the size of the nodal domains. In this context, we refer to the order of vanishing of a function $w$ in an $L^2$-sense (and will later show that this can equivalently be reformulated in an $L^{\infty}$-sense):
\begin{defi}
Let $(M,g)$ be a smooth manifold. We say that a function $w \in L^2_g(M)$ vanishes of order $\gamma$ at the a point $p\in M$ if
\begin{align*}
\limsup\limits_{r\rightarrow 0} \frac{\ln(\left\| w \right\|_{L^2_g(B_{r}(p))})}{\ln(r)} = \gamma.
\end{align*}
\end{defi}

\subsection{The Main Results}
While both the qualitative and quantitative unique continuation properties of classical Schrödinger equations have been intensively studied much less is known about the analogous properties of solutions of fractional Schrödinger equations. Only very recently, first results on the weak and strong unique continuation principles for these nonlocal equation appeared \cite{Seo}, \cite{Seo1}, \cite{FF}, \cite{Rue}, \cite{Rue14}. Our results can be regarded as a quantitative version of this line of thought on compact Riemannian manifolds without boundary. \\

The main result of this article states that on compact manifolds (i.e. $M$ being compact) eigenfunctions of the generalized Dirichlet-to-Neumann map (\ref{eq:CSE}), (\ref{eq:GDN}) behave in an analogous way as eigenfunctions of their classical counterpart, the Riemannian Laplacian:

\begin{prop}[Order of vanishing for eigenfunctions]
\label{prop:eigen}
Let $(M,g)$ be a smooth, compact Riemannian manifold without boundary. Let $s\in(0,1)$ and let $w\in H^s_g(M)$ be an eigenfunction of the generalized Dirichlet-to-Neumann map (\ref{eq:CSE}), (\ref{eq:GDN}) with a smooth, bounded vector field $b$ and a smooth and bounded function $c$ on $M \times \R_+$. Then the order of vanishing of $w$ does not exceed
\begin{align*}
C \lambda^{\frac{1}{2s}} + C,
\end{align*}
for some constant $C>0$ which only depends on the manifold $(M,g)$ and the constants $s$, $\left\| b \right\|_{L^{\infty}}$, $\left\| c \right\|_{L^{\infty}}$.
\end{prop}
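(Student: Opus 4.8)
The plan is to follow the by-now classical route of Donnelly--Fefferman and Garofalo--Lin, adapted to the degenerate Caffarelli--Silvestre extension \eqref{eq:CSE}: a Carleman inequality for the weighted operator $L\bar w := \p_{n+1}y_{n+1}^{1-2s}\p_{n+1}\bar w + y_{n+1}^{1-2s}\D_g\bar w$ yields a quantitative three balls inequality, which upgrades to a uniform boundary doubling estimate on $M$, which in turn bounds the order of vanishing. The only place the eigenvalue $\lambda$ enters is through the generalized Neumann boundary condition $-\lim_{y_{n+1}\to0}y_{n+1}^{1-2s}\p_{n+1}\bar w=\lambda w$, so the whole task reduces to tracking how $\lambda$ propagates through this chain of inequalities.

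First I would work locally: fix $p\in M$, pick geodesic normal coordinates on a ball $B_{r_0}(p)\subset M$ and consider $\bar w$ on the half-ball $B_{r_0}^+:=B_{r_0}(p)\times(0,r_0)\subset M\times\R_+$, where it solves $L\bar w=-y_{n+1}^{1-2s}(b\cdot\nabla\bar w+c\bar w)$ with the above boundary condition on $B_{r_0}(p)\times\{0\}$. I would then invoke the weighted Carleman estimate established earlier in the paper (of the form $\tau^{3}\|e^{\tau\phi}y_{n+1}^{(1-2s)/2}\bar w\|^2 + \tau\|e^{\tau\phi}y_{n+1}^{(1-2s)/2}\nabla\bar w\|^2 \lesssim \|e^{\tau\phi}y_{n+1}^{(1-2s)/2}L\bar w\|^2 + (\text{boundary terms})$), with $\phi$ a suitable radial weight (e.g.\ $\phi=-\log|x|$ or a convexified polynomial weight) and Carleman parameter $\tau$. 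The lower order terms $b\cdot\nabla\bar w+c\bar w$ are absorbed into the left-hand side once $\tau\gtrsim 1+\|b\|_{L^\infty}+\|c\|_{L^\infty}$. The delicate contribution is the boundary term produced by integrating $L$ by parts against $e^{2\tau\phi}$: it reads $\int_{B_{r_0}(p)\times\{0\}} \bigl(\lim_{y_{n+1}\to0}y_{n+1}^{1-2s}\p_{n+1}\bar w\bigr)\,e^{2\tau\phi}\bar w = -\lambda\int e^{2\tau\phi}|w|^2$, and this must be dominated by the bulk left-hand side. Using a weighted trace inequality of the type $\|e^{\tau\phi}w\|_{L^2(M\times\{0\})}^2 \lesssim \tau^{2s-1}\|e^{\tau\phi}y_{n+1}^{(1-2s)/2}\nabla\bar w\|^2 + \tau^{2s+1}\|e^{\tau\phi}y_{n+1}^{(1-2s)/2}\bar w\|^2$ --- whose powers of $\tau$ reflect exactly the $2s$-homogeneity of the fractional operator --- the term $\lambda\int e^{2\tau\phi}|w|^2$ is absorbable precisely when $\tau^{2s}\gtrsim \lambda$, i.e.\ $\tau\gtrsim\lambda^{1/2s}$.

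Choosing $\tau\simeq 1+\lambda^{1/2s}$, the Carleman inequality applied with a cutoff localizing to the three half-balls $B_r^+$, $B_{2r}^+$, $B_{3r}^+$ and then optimized in $\tau$ over $[\lambda^{1/2s},\infty)$ gives a quantitative three balls inequality $\|\bar w\|_{L^2_a(B_{2r}^+)} \le e^{C(1+\lambda^{1/2s})}\,\|\bar w\|_{L^2_a(B_{r}^+)}^{\al}\,\|\bar w\|_{L^2_a(B_{3r}^+)}^{1-\al}$ (with $L^2_a$ the $y_{n+1}^{1-2s}$-weighted space and $\al\in(0,1)$ independent of all data), together with its boundary analogue for $\|w\|_{L^2(B_\cdot(p))}$ obtained via the trace estimate and the fact that a solution of the extension problem is controlled by its trace. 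A chaining argument along a fixed finite covering of the compact manifold $M$ --- propagation of smallness from one ball to a neighbouring one --- then converts the local three balls inequality into the global boundary doubling estimate $\|w\|_{L^2(B_{2r}(p))}\le e^{C(1+\lambda^{1/2s})}\|w\|_{L^2(B_r(p))}$ for all $p\in M$ and all $r\le r_0$, with $C=C(M,g,s,\|b\|_{L^\infty},\|c\|_{L^\infty})$ and $r_0$ depending only on the same data. Finally, iterating this doubling inequality down the dyadic scales $r=2^{-k}r_0$ yields $\|w\|_{L^2(B_r(p))}\ge c\,r^{\,C(1+\lambda^{1/2s})}\|w\|_{L^2(B_{r_0}(p))}$, whence $\limsup_{r\to0}\ln\|w\|_{L^2(B_r(p))}/\ln r \le C(1+\lambda^{1/2s})=C\lambda^{1/2s}+C$, which is the claimed bound.

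The main obstacle is the treatment of the boundary term and the resulting sharp threshold $\tau\gtrsim\lambda^{1/2s}$: one must have a Carleman estimate for the degenerate operator $L$ whose boundary contributions are not merely nonnegative (as in the homogeneous Neumann case) but can be paired with the inhomogeneity $\lambda w$ and reabsorbed, and the bookkeeping of the powers of $y_{n+1}$ and of $\tau$ in the weighted trace inequality is exactly what produces the exponent $1/2s$ rather than the classical $1/2$. A secondary technical point is that the weight $\phi$ must be chosen so that the bulk gradient term and the boundary trace term simultaneously carry enough powers of $\tau$ for the absorption, and so that the degeneracy $y_{n+1}^{1-2s}$ at $\{y_{n+1}=0\}$ does not destroy the pseudoconvexity needed for the estimate; controlling the interplay of this degeneracy, the geometry of $(M,g)$, and the lower order terms $b,c$ uniformly is where most of the work lies.
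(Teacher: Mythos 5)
Your high-level chain (Carleman $\Rightarrow$ three balls $\Rightarrow$ doubling $\Rightarrow$ vanishing order) matches the paper's strategy, and your diagnosis of where the exponent $1/2s$ comes from (absorption of the boundary term requires $\tau\gtrsim\lambda^{1/2s}$) is essentially correct, even though the specific powers of $\tau$ in the trace inequality you quote are off --- the paper uses $\|u\|^2_{L^2(\text{bdry})}\lesssim \tau^{2-2s}\|y_{n+1}^{(1-2s)/2}u\|^2+\tau^{-2s}\|y_{n+1}^{(1-2s)/2}\nabla u\|^2$, whose powers yield the threshold $\tau\gtrsim\lambda^{1/2s}$ after being paired with the $\tau^3$ and $\tau$ prefactors on the left of the Carleman inequality. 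That part is repairable bookkeeping. The genuine gap lies elsewhere: your step passing from the bulk three-balls/doubling to its \emph{boundary} analogue is asserted ``via the trace estimate and the fact that a solution of the extension problem is controlled by its trace,'' but this does not hold locally. The Hardy-trace inequality gives the direction $\|w\|_{L^2(B_r(p))}\lesssim \|y_{n+1}^{(1-2s)/2}\nabla\bar w\|_{L^2(B_r^+)}+\ldots$, i.e.\ boundary controlled by bulk. What you actually need to close the argument is the reverse inequality on a \emph{small} half-ball,
\begin{equation*}
\left\|y_{n+1}^{\frac{1-2s}{2}}\bar w\right\|_{L^2(B_r^+(p))}\ \lesssim\ \|w\|_{L^2(B_r(p))},
\end{equation*}
and this is false as a general statement: the restriction of the extension $\bar w$ to $B_r^+(p)$ is a nonlocal functional of the global trace, not of $w|_{B_r(p)}$ alone. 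This is precisely the obstruction the paper flags before its proof (``For the opposite bound we provide an indirect argument which --- via blow-up --- relates the desired boundary inequality to the weak unique continuation principle'').

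What the paper actually does in the proof of this proposition: it takes the bulk doubling estimate as a starting point, controls $\|w\|_{L^2(B_R)}$ from below by $\|y_{n+1}^{(1-2s)/2}\bar w\|_{L^2(B_R^+)}$ using the trace/interpolation and Caccioppoli estimates (the ``easy'' direction at scale $R\sim r_0$), and then proves by contradiction that for each fixed eigenfunction there exist $C,\bar r>0$ such that the displayed inequality above holds for $0<r\le\bar r$. The contradiction argument rescales along a sequence $r_k\to0$ violating it, uses Rellich compactness, and produces a nontrivial limit $w_0$ in $B_1^+$ with both $w_0=0$ and $\lim y_{n+1}^{1-2s}\partial_{n+1}w_0=0$ on $B_1\cap\{y_{n+1}=0\}$, contradicting weak unique continuation; uniformity of the constant then follows from finite multiplicity of the eigenvalue. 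The paper notes this argument is non-quantitative and, for the nodal domain estimates, replaces it in Section 5 with the Bellova--Lin route (a boundary Carleman-type smallness lemma plus a delicate interpolation between the boundary $L^2$, boundary $\dot H^1$ and bulk $H^2_a$ norms). Your proposal contains neither the blow-up/WUCP reduction nor the smallness/interpolation machinery, so as written the bulk-to-boundary step is unjustified; you would need to add one of the two arguments to make the proof go through.
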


We stress that in the limit $s\rightarrow 1$ this exactly reproduces the result of Donnelly and Fefferman \cite{DoF}. Moreover, our techniques also transfer to the situation of fractional Schrödinger equations with differentiable potentials:

\begin{prop}[Order of vanishing for $C^1$ potentials]
\label{prop:C1}
Let $(M,g)$ be a smooth, compact Riemannian manifold without boundary. Let $s\in(0,1)$ and let $w\in H^s_g(M)$ be a solution of the generalized Dirichlet-to-Neumann map with  $C^1$ potential $V$, i.e. let $\bar{w}\in H^{1}(y_{n+1}^{1-2s}d\vol_g d y _{n+1}, M \times \R_{+})$ solve (\ref{eq:CSE}) with a smooth, bounded vector field $b$ and a smooth and bounded function $c$ on $M \times \R_+$ and let 
\begin{equation}
\label{eq:GDNP}
\Lambda(w)= Vw \mbox{ on } M.
\end{equation} 
Then the order of vanishing of $w$ does not exceed
\begin{align*}
C \left\| V \right\|_ {C^1}^{\frac{1}{2s}} + C,
\end{align*}
for some constant $C>0$ which only depends on the manifold $(M,g)$ and the constants $s$, $\left\| b \right\|_{L^{\infty}}$, $\left\| c \right\|_{L^{\infty}}$.
\end{prop}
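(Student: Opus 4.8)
The plan is to reduce Proposition~\ref{prop:C1} to the eigenfunction case of Proposition~\ref{prop:eigen} by absorbing the $C^1$ potential into the zeroth-order coefficient of the bulk equation, so that the same Carleman-based machinery applies verbatim. Concretely, given $\bar w$ solving \eqref{eq:CSE} with $\Lambda(w) = Vw$, I would first transfer $V$, which lives only on $M = M\times\{0\}$, to a function on the bulk $M\times\R_+$. The natural device is an extension operator: pick $\tilde V \in C^1(M\times\R_+)$ with $\tilde V|_{M\times\{0\}} = V$, $\|\tilde V\|_{C^1(M\times\R_+)} \lesssim \|V\|_{C^1(M)}$, and with $\tilde V$ supported in a fixed collar $M\times[0,1]$ (a standard Seeley-type or reflection extension does this). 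Then set $\tilde w := e^{-y_{n+1}\tilde V}\,\bar w$, or more simply consider $\bar v := \bar w$ and look for a first-order perturbation $\bar u = \bar w + y_{n+1}^{2s}\,(\text{something})$ that trades the Neumann condition $-\lim y_{n+1}^{1-2s}\p_{n+1}\bar w = Vw$ for a homogeneous one; the cleanest route is the substitution $\bar u(x,y_{n+1}) := \bar w(x,y_{n+1}) - \tfrac{1}{2s}\, y_{n+1}^{2s}\, \tilde V(x,y_{n+1})\, \bar w(x,y_{n+1})$, chosen precisely so that the singular Neumann trace of the correction cancels $Vw$ while the correction itself vanishes at $y_{n+1}=0$.

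The second step is to compute the PDE satisfied by $\bar u$. Plugging the substitution into the operator $\p_{n+1} y_{n+1}^{1-2s}\p_{n+1} + y_{n+1}^{1-2s}\D_g + y_{n+1}^{1-2s} b\cdot\nabla + y_{n+1}^{1-2s} c$, every term produced by the correction is, after factoring out $y_{n+1}^{1-2s}$, again of the form $y_{n+1}^{1-2s}$ times (a smooth bounded vector field acting on $\bar w$) plus $y_{n+1}^{1-2s}$ times (a smooth bounded function times $\bar w$), with the new coefficients controlled by $\|\tilde V\|_{C^1} \lesssim \|V\|_{C^1}$ and by $\|b\|_{L^\infty}, \|c\|_{L^\infty}$. (The one point requiring care: the correction has a $y_{n+1}^{2s}$ prefactor, so differentiating it twice in $y_{n+1}$ produces a $y_{n+1}^{2s-2}$ term; but this is multiplied by $y_{n+1}^{1-2s}$ from the operator and by the $\tilde V \bar w$ factor, giving $y_{n+1}^{-1}\tilde V\bar w$, which combines with $\p_{n+1}\bar w$ terms to stay within the admissible class exactly because the coefficient of $y_{n+1}^{2s}$ was tuned to $\tfrac{1}{2s}$ — the same algebra that makes the Caffarelli–Silvestre weight natural.) Thus $\bar u$ solves an equation of the form \eqref{eq:CSE} with modified smooth bounded $b', c'$ satisfying $\|b'\|_{L^\infty} + \|c'\|_{L^\infty} \lesssim \|b\|_{L^\infty} + \|c\|_{L^\infty} + \|V\|_{C^1}$, together with the \emph{homogeneous} generalized Neumann condition $\Lambda(u) = 0$, i.e.\ $\bar u$ is an eigenfunction with eigenvalue $\lambda = 0$ — equivalently one may keep a genuine eigenvalue by not cancelling but rather rewriting $Vw = \lambda w + (V-\lambda)w$; either way one lands in the hypotheses of Proposition~\ref{prop:eigen}.

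The third step is to invoke Proposition~\ref{prop:eigen} (or, more precisely, the quantitative three-balls inequality, bulk doubling, and boundary doubling estimates established in its proof) for $\bar u$, which are stated to depend only on $(M,g)$, $s$, and the $L^\infty$-norms of the bulk coefficients — now $\lesssim \|V\|_{C^1}$. This yields a bound on the vanishing order of $u = \bar u|_{M\times\{0\}}$ of the form $C\|V\|_{C^1}^{1/2s} + C$. Finally I would check that passing back from $u$ to $w$ costs nothing: since $u = w - \tfrac{1}{2s}\,y_{n+1}^{2s}\tilde V w\big|_{y_{n+1}=0} = w$ on $M\times\{0\}$, the traces literally coincide, so the vanishing order of $w$ at any $p\in M$ equals that of $u$, and the proposition follows.

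The main obstacle I anticipate is \textbf{the algebra of the substitution at the boundary}: verifying that the correction term is tuned so that (i) it produces exactly the singular boundary contribution needed to kill $Vw$ in the generalized Neumann trace, and (ii) the extra interior terms it generates, despite involving negative powers of $y_{n+1}$, reorganize into the admissible class $y_{n+1}^{1-2s}(b'\cdot\nabla + c')$ with coefficients bounded by $\|V\|_{C^1}$ and not worse. This is where the degenerate weight $y_{n+1}^{1-2s}$ and the fractional exponent $2s$ must conspire, and it is essentially the only place where $C^1$ regularity of $V$ (rather than mere boundedness) is used — one derivative of $\tilde V$ appears when $\D_g$ or $b\cdot\nabla$ hits the correction, and without it the perturbed $b'$ would fail to be bounded. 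A secondary, more routine point is ensuring the extension $\tilde V$ can be taken smooth in $y_{n+1}$ (or at least $C^1$ with the weight) and compactly supported in the collar, so that the perturbed coefficients remain smooth and bounded on all of $M\times\R_+$ as required by Proposition~\ref{prop:eigen}.
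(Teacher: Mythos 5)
Your reduction of Proposition~\ref{prop:C1} to Proposition~\ref{prop:eigen} runs into two genuine obstructions, and it also inverts the logical order of the paper. In the paper everything (the Carleman estimate of Proposition~\ref{prop:Carl}, the three--balls inequality of Proposition~\ref{prop:3ba}, and the doubling estimates) is stated and proved for a general $C^1$ boundary potential $V$; the eigenfunction case is literally the specialization $V\equiv\lambda$, and the proof of Proposition~\ref{prop:C1} is the same blow-up argument of Section~\ref{sec:van} carried out with $V$ in place of $\lambda$. So one does not reduce the $C^1$-potential case to the eigenfunction case; it is the other way around.

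As for the substitution itself: write $\psi = \tfrac{1}{2s} y_{n+1}^{2s}\tilde V$ so that $\bar u = (1+\psi)\bar w$ (you need the $+$ sign to kill $Vw$ in the trace), and set $L_0 = \p_{n+1} y_{n+1}^{1-2s}\p_{n+1}$. A direct computation gives
\begin{align*}
L_0(\psi\bar w) = \psi\, L_0\bar w + 2\,y_{n+1}^{1-2s}(\p_{n+1}\psi)(\p_{n+1}\bar w) + (L_0\psi)\bar w,
\end{align*}
and since $y_{n+1}^{1-2s}\p_{n+1}\psi = \tilde V + \tfrac{1}{2s}y_{n+1}\p_{n+1}\tilde V$ as well as $L_0\psi = \tfrac{2s+1}{2s}\p_{n+1}\tilde V + \tfrac{1}{2s}y_{n+1}\p_{n+1}^2\tilde V$, the last two terms are $O(1)\cdot\p_{n+1}\bar w$ and $O(1)\cdot\bar w$, \emph{not} $y_{n+1}^{1-2s}\cdot(\text{bounded})\cdot\p_{n+1}\bar w$ and $y_{n+1}^{1-2s}\cdot(\text{bounded})\cdot\bar w$. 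Forcing them into the admissible form $y_{n+1}^{1-2s}b'\cdot\nabla + y_{n+1}^{1-2s}c'$ requires $b',c'\sim y_{n+1}^{2s-1}$, which is unbounded near $y_{n+1}=0$ for every $s<1/2$. The exponent $2s$ is pinned by the requirement that the trace $\lim y_{n+1}^{1-2s}\p_{n+1}(\psi\bar w) = 2s\,Vw$ be finite and nonzero, and no $C^1$ extension $\tilde V$ can help because $\tilde V|_{y_{n+1}=0}=V\neq 0$. Separately, even where the algebra does close ($s\geq 1/2$), the conclusion would not have the right exponent: $V$ lives on the boundary and scales like $(\text{length})^{-2s}$, while a bulk coefficient $c'$ scales like $(\text{length})^{-2}$ and $b'$ like $(\text{length})^{-1}$. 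If $\|V\|_{C^1}$ is pushed into $\|b'\|_{L^\infty}$ and $\|c'\|_{L^\infty}$, absorbing those error terms in the Carleman estimate forces $\tau \gtrsim \|b'\|^2 \sim \|V\|_{C^1}^2$ (not $\|V\|_{C^1}^{1/2s}$), and Proposition~\ref{prop:eigen} as stated does not quantify how its constant $C$ depends on $\|b\|_{L^\infty},\|c\|_{L^\infty}$ precisely because $b,c$ are treated there as fixed smooth coefficients of bounded size, not as large quantities. The paper keeps $V$ on the boundary throughout and controls it via the interpolation inequality~\eqref{eq:interpol}; that is what produces the threshold $\tau\gtrsim\|V\|_{C^1}^{1/2s}$ and hence the claimed vanishing order. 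Your reduction, as written, either fails outright ($s<1/2$) or yields a strictly weaker exponent ($s\geq 1/2$).
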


Again, we emphasize that this parallels the behavior of solutions of the classical Laplacian, as for instance described in \cite{Bakri}.\\

Moreover, we show that eigenfunctions of the generalized Dirichlet-to-Neumann map with analytic coefficients on analytic manifolds share further properties of their local analogue by proving an upper bound on the $\mathcal{H}^{n-1}$ Hausdorff-measure of their nodal domains:

\begin{prop}[Nodal domain estimate]
\label{prop:nodaldom}
Let $(M,g)$ be an analytic, $n$-dimensional compact Riemannian manifold without boundary. Let $s\in(0,1)$ and let $w\in H^s_g(M)$ be an eigenfunction of the generalized Dirichlet-to-Neumann operator (\ref{eq:CSE}), (\ref{eq:GDN}) in which all coefficients are real analytic, i.e. the functions $b$, $c$ in (\ref{eq:CSE}) are real analytic functions.
Then, 
\begin{align*}
\mathcal{H}^{n-1}(\{x\in M |  w(x)= 0 \}) \leq  C\lambda^{\frac{1}{2s}} + C
\end{align*}
for some constant $C>0$ which only depends on the manifold $(M,g)$ and the constants $s$, $\left\| b \right\|_{L^{\infty}}$, $\left\| c \right\|_{L^{\infty}}$.
\end{prop}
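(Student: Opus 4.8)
The plan is to deduce the nodal domain bound from the quantitative boundary doubling estimate (which, per the introduction, is a consequence of the three balls inequality and Carleman estimates established earlier in the article) following the Donnelly–Fefferman strategy for analytic manifolds, but adapted to the fact that $w$ lives on $M = \partial(M\times\R_+)$ as the trace of the Caffarelli–Silvestre extension $\bar w$. First I would record the boundary doubling index: by Proposition \ref{prop:eigen} (and its proof via boundary doubling), there is a constant $N \lesssim \lambda^{\frac{1}{2s}}+1$ such that for every $p\in M$ and every small $r$,
\begin{align*}
\|w\|_{L^2_g(B_{2r}(p))} \leq e^{CN} \|w\|_{L^2_g(B_r(p))}.
\end{align*}
The key point for the nodal estimate is that $N$ controls the "frequency" of $w$ uniformly at all scales down to a definite radius.

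The analyticity is used exactly as in Donnelly–Fefferman: since $M$, $g$, $b$, $c$ are real analytic, the coefficients of the degenerate elliptic equation \eqref{eq:CSE} are analytic in the tangential variables, and hence $\bar w$ — and in particular its trace $w$ and the Neumann data — extends holomorphically in the tangential directions to a complex neighborhood of $M$ of some fixed width $\rho_0>0$ depending only on $(M,g)$ and the coefficients (the degenerate weight $y_{n+1}^{1-2s}$ only affects the normal direction, where we are evaluating at $y_{n+1}=0$, so analyticity in the $x$-variables is unaffected — this is the one technical point that needs care, and I would invoke the analytic regularity theory for the extension, e.g. via reflection/even-extension in the $s=\frac12$ case and the general structure in \cite{CaS}, or a direct analytic-hypoellipticity argument in the tangential variables). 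Having a holomorphic extension, the standard complexification lemma converts the doubling index $N$ into a pointwise bound of the form: on a ball $B_r(p)$ with $r\leq \rho_0$, after rescaling to unit size, $w$ is comparable to a holomorphic function $F$ on the complex ball of radius $\tfrac12$ with $\sup_{|z|\leq 1/2}|F| \leq e^{CN}\, |F(0_{\mathrm{real-ball average}})|$, so that $F$ has at most $CN$ zeros in $B_{1/4}$ counted with multiplicity.

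From here the nodal measure bound is obtained by the classical covering/slicing argument: cover $M$ by $\sim \rho_0^{-n}$ balls $B_{\rho_0/2}(p_j)$; in each, integrate over lines in $\sim n$ coordinate directions; on each such line segment the restriction of $w$ is (comparable to) the restriction of the holomorphic function $F$, whose real zeros number at most $CN \lesssim \lambda^{\frac{1}{2s}}+1$ by the zero-counting bound above; an integral-geometric (Crofton-type) formula then bounds $\mathcal H^{n-1}(\{w=0\}\cap B_{\rho_0/2}(p_j))$ by $C N$, and summing over the $O(1)$ balls gives
\begin{align*}
\mathcal H^{n-1}(\{x\in M\mid w(x)=0\}) \leq C\lambda^{\frac{1}{2s}}+C.
\end{align*}
One must separately dispose of points where $w$ vanishes together with all its derivatives: by the strong unique continuation property for the extension (cited in the introduction, \cite{Rue}, \cite{FF}) the nodal set has no interior, and by analyticity it is $(n-1)$-rectifiable with locally finite $\mathcal H^{n-1}$-measure, so the covering argument captures all of it.

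The main obstacle I anticipate is establishing the quantitative holomorphic extension of $w$ in the tangential variables with a width $\rho_0$ and constants depending only on the allowed data — i.e. transferring the interior analyticity of solutions of the degenerate equation \eqref{eq:CSE} up to the boundary $\{y_{n+1}=0\}$ in a way compatible with the $L^2$-based doubling index. Everything downstream (complexification, zero counting, Crofton) is then a routine adaptation of \cite{DoF}; the genuinely new ingredient is checking that the Caffarelli–Silvestre weight does not obstruct boundary analyticity in $x$, and bookkeeping that the constant in the exponential extension estimate is the same $N\lesssim\lambda^{\frac1{2s}}+1$ produced by the boundary doubling inequality.
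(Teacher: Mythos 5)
Your proposal follows essentially the same route as the paper: boundary $L^2$ doubling, a quantitative holomorphic extension of $w$ in the tangential variables (Proposition \ref{prop:analyt}), an upgrade to an $L^\infty$ doubling estimate for the complex extension via the bulk doubling and Lemma \ref{lem:small} (Proposition \ref{prop:doubl2}), a one-variable zero-counting lemma for analytic functions (Lemma \ref{lem:compl}), and the integral-geometry estimate. The technical point you flag as the main obstacle---obtaining the holomorphic extension to a fixed, $\lambda$-independent complex width with a constant of size $e^{C(\lambda^{1/(2s)}+1)}$---is exactly what Proposition \ref{prop:analyt} delivers, by rescaling the eigenvalue to unity to get a local analyticity radius $\sim\lambda^{-1/(2s)}$ (via Corollary \ref{cor:analytic}) and then iterating the extension of order $\lambda^{1/(2s)}$ times in the complex directions, which is precisely what produces the exponential factor.
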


Again, this property has been derived in the seminal paper of Donnelly and Fefferman \cite{DoF} for the classical Laplacian operator. Recently, the analogue of this problem for the fractional Laplacian has attracted a certain amount of attention with recent articles of Bellova and Lin \cite{BL14} and of Zelditch \cite{Zel14}. In both articles the authors deal with a variant of the half-Laplacian eigenvalue problem, more precisely the Steklov problem. The methods of attacking the problem differ significantly: Bellova and Lin argue via frequency functions while Zelditch makes use of wave groups and microlocal analysis. 
Whereas Zelditch obtains the (conjectured) optimal vanishing order, Bellova and Lin lose polynomial factors in their estimates which thus results in only almost optimal bounds. Simultaneously and independently of the present work the doubling inequalities of Bellova and Lin have been improved to an optimal scaling in the eigenvalue by Zhu \cite{Zhu14}. His results lead to optimal bounds on the vanishing order of eigenfunctions. However, the optimal doubling property is restricted to small $\lambda$-dependent balls and thus necessarily leads to losses if applied to obtain estimates on the size of nodal domains.\\

In the present article we instead make use of Carleman estimates which deal with the resulting boundary contributions in an optimal way. In particular, Proposition \ref{prop:nodaldom} covers the case of the Steklov problem in real analytic domains:

\begin{cor}[Steklov eigenvalues]
\label{cor:Steklov}
Let $\Omega \subset \R^{n+1}$ be a bounded real analytic, $(n+1)$-dimensional manifold with real analytic boundary. Let $w: \Omega \rightarrow \R$ be a solution of
\begin{equation}
\label{eq:Steklov}
\begin{split}
\D w & = 0 \mbox{ in } \Omega,\\
\nu \cdot \nabla w &= \lambda w \mbox{ on } \partial \Omega,
\end{split}
\end{equation} 
where $\nu: \partial \Omega \rightarrow \R^{n+1}$ denotes the outward unit normal field associated with $\Omega$. Then,
\begin{align*}
\mathcal{H}^{n-1}(\{x\in \partial \Omega |  w(x)= 0 \}) \leq  C\lambda +C
\end{align*}
for some constant $C>0$ which only depends on the domain $\Omega$.
\end{cor}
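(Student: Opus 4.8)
The plan is to recognize the Steklov problem \eqref{eq:Steklov} as the special case $s=\tfrac12$ of the generalized Dirichlet-to-Neumann eigenvalue problem \eqref{eq:CSE}, \eqref{eq:GDN} posed on $M=\p\Omega$, and then to invoke Proposition \ref{prop:nodaldom}. Since $\Omega$ is real analytic, $\p\Omega$ is, with its induced metric $g$, an $n$-dimensional compact analytic Riemannian manifold without boundary. For $s=\tfrac12$ the weight $y_{n+1}^{1-2s}$ is identically $1$, so \eqref{eq:CSE} reduces to a uniformly elliptic equation and the operator $\Lambda$ in \eqref{eq:GDN} becomes the usual Dirichlet-to-Neumann map $w\mapsto -\lim_{y_{n+1}\to 0}\p_{n+1}\bar w$.

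First I would pass to a collar neighbourhood of $\p\Omega$ inside $\Omega$. Setting $\rho=\dist(\cdot,\p\Omega)$ and using the inward normal exponential map, I identify $\{x\in\Omega:\rho(x)<\varepsilon\}$ with $\p\Omega\times[0,\varepsilon)$ through real analytic Fermi coordinates. In these coordinates the Euclidean metric on $\Omega$ equals $d\rho^2+g_\rho$, where $\rho\mapsto g_\rho$ is a real analytic family of metrics on $\p\Omega$ with $g_0=g$, and the flat Laplacian takes the form
\begin{equation*}
\D=\dr^2+H(\rho,\cdot)\,\dr+\D_{g_\rho},\qquad H(\rho,\cdot)=\tfrac12\,\dr\log\det g_\rho ,
\end{equation*}
with $H$ real analytic and bounded on $\p\Omega\times[0,\varepsilon)$. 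Since the outward unit normal satisfies $\nu=-\dr$ on $\p\Omega$, the Steklov condition $\nu\cdot\nabla w=\lambda w$ is exactly $\Lambda(w)=\lambda w$ for the operator \eqref{eq:GDN} with $s=\tfrac12$. Hence $\bar w:=w$ solves, on the collar, an equation of type \eqref{eq:CSE} with $s=\tfrac12$, $c:=0$ and with $b$ the bounded real analytic vector field determined by $b\cdot\nabla\bar w=H\,\p_{n+1}\bar w$; the norms $\|b\|_{L^\infty}$, $\|c\|_{L^\infty}$ are controlled by the second fundamental form of $\p\Omega$ and by $\varepsilon$, hence by $\Omega$ alone.

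The only feature not literally contained in the hypotheses of Proposition \ref{prop:nodaldom} is that the leading tangential part $\D_{g_\rho}$ is built from the $\rho$-dependent metric $g_\rho$ instead of a fixed one. However $g_\rho-g=O(\rho)$ is real analytic and vanishes on $\{\rho=0\}$, and the whole chain of arguments behind Proposition \ref{prop:nodaldom} — the Carleman estimates, the quantitative three balls inequality and the boundary doubling estimate they yield — is local near $M\times\{0\}=\p\Omega\times\{0\}$ and is stable under such a perturbation of the top-order coefficients, provided $\varepsilon$ is taken small. Thus the proof of Proposition \ref{prop:nodaldom} carries over with only notational changes and gives a boundary doubling estimate for $w$ on $\p\Omega$ with constant $C\lambda^{1/(2s)}+C=C\lambda+C$, from which $\mathcal{H}^{n-1}(\{x\in\p\Omega:w(x)=0\})\le C\lambda+C$ follows precisely as there, with $C$ depending only on $\Omega$. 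I expect the main — indeed essentially the only — obstacle to be this last point: verifying that allowing the metric on the slices $\p\Omega\times\{\rho\}$ to vary with $\rho$ leaves the Carleman estimates of Proposition \ref{prop:nodaldom} intact. The remaining steps — identifying $s=\tfrac12$, the sign of the normal derivative, the analyticity of the Fermi coordinates, and the dependence of all constants on $\Omega$ only — are routine.
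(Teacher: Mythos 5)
Your proposal is correct and follows the same overall strategy as the paper: recognize the Steklov problem as the case $s=\tfrac12$ of the generalized Dirichlet-to-Neumann framework, transform a collar of $\p\Omega$ into a half-space problem of the form (\ref{eq:CSE}), (\ref{eq:GDN}), and invoke Proposition \ref{prop:nodaldom}. The one genuine difference is the choice of flattening map. The paper parametrizes the boundary locally as a graph and uses $F(y)=(y',y_{n+1}+\Phi(y'))$; this produces a coefficient matrix $a^{ij}$ that is independent of the transversal variable (since $\Phi$ depends only on $y'$) but pushes a factor into the boundary condition, $\Lambda(v)=\sqrt{1+|\nabla\Phi|^2}\,\lambda v$, so that the potential $V$ becomes a variable analytic function rather than the constant $\lambda$. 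You instead use global Fermi (collar) coordinates, which keep the boundary condition clean ($V=\lambda$ constant) at the price of making the tangential metric $g_\rho$ depend on the transversal variable $\rho$. Both routes produce an operator that is not literally of the form (\ref{eq:CSE}) with a fixed product metric, and you correctly single out the only non-routine point: one must check that the Carleman estimate of Proposition \ref{prop:Carl} survives this perturbation of the top-order coefficients. The paper handles exactly this in Remark \ref{rmk:Steklov}: one introduces geodesic coordinates induced by the full $(n{+}1)$-dimensional operator (rather than first tangential then normal coordinates), and since the Carleman proof already allows the spherical metric $b(r,\theta)$ to depend on $r$ via the error bounds (\ref{eq:err1})--(\ref{eq:err2}), the $\rho$-dependence of $g_\rho$ is absorbed in the same way. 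Your closing assertion of ``stability under such a perturbation'' is therefore correct; to turn it into a proof you should point directly at (\ref{eq:err1}), (\ref{eq:err2}), which require nothing beyond smoothness of the ambient metric near $(p,0)$ and hence cover the Fermi-coordinate metric $d\rho^2+g_\rho$ just as well as a product metric.
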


Let us briefly outline the main ideas of the article. As in \cite{DoF}, \cite{Bakri} or \cite{Da}, our main tool consists a (bulk) Carleman estimate which we phrase for a general $C^1$ potential $V$:

\begin{prop}[Variable coefficient Carleman estimate]
\label{prop:Carl}
Let $(M,g)$ be a compact Riemannian manifold without boundary, let $p\in M$. Let $V: M \rightarrow \R$ be a $C^1$ potential with $\left\| V \right\|_{C^1(M)}<\infty$ and $s\in(0,1)$. Assume that $w: M\times \R_+ \rightarrow \R$, $w\in H^{1}(y_{n+1}^{1-2s}d \vol_{g} dy_{n+1}, M\times \R_+)$ with $\supp{(w)} \subset \overline{B_{r_0}^+(p)}\setminus \overline{B_{\epsilon}^+(p)}$, where $B_{r_0}^+(p)$ denotes a sufficiently small geodesic (half-)ball (i.e. $0<\epsilon\ll r_0=r_0(g)$), solves
\begin{align*}
\p_{n+1} y_{n+1}^{1-2s} \p_{n+1} w + y_{n+1}^{1-2s} \D_g w  &= f \mbox{ in } M\times \R_+,\\
-c_s \lim\limits_{y_{n+1}\rightarrow 0} y_{n+1}^{1-2s} \p_{n+1} w &= V w \mbox{ on } M \times \{0\}.
\end{align*}
Let
$$\phi(x)= - \ln(r) + \frac{1}{10}\left( \ln(r)\arctan(\ln(r)) - \frac{1}{2} \ln(1+\ln(r)^2) \right).$$
Then, for $\tau\gtrsim \left\| V \right\|_{C^1(M)}^{\frac{1}{2s}}>0$ we have
\begin{equation}
\label{eq:vCarl}
\begin{split}
&\tau^{\frac{3}{2}} \left\|  e^{\tau \phi} r^{-1}(1+\ln(r)^2)^{-\frac{1}{2}} y_{n+1}^{\frac{1-2s}{2}}  w \right\|_{L^2(B_{r_0}^+(p))} \\
&+  \tau^{\frac{1}{2}}\left\| e^{\tau \phi} (1+\ln(r)^2)^{-\frac{1}{2}}y_{n+1}^{\frac{1-2s}{2}} \nabla w \right\|_{L^2(B_{r_0}^+(p))}
\lesssim  \ \left\|e^{\tau \phi} r  y_{n+1}^{\frac{2s-1} {2}} f \right\|_{L^2(B_{r_0}^+(p))}.
\end{split}
\end{equation}
\end{prop}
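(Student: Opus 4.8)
The estimate will be obtained by a positive‑commutator argument in adapted coordinates, in the spirit of the Carleman estimates for the classical Laplacian in \cite{DoF}, \cite{Bakri}, \cite{Da} and of the nonlocal versions in \cite{Rue}, \cite{Rue14}; the new features are the systematic treatment of the degenerate weight $y_{n+1}^{1-2s}$ and of the nonlocal boundary condition through $V$. By density it suffices to argue for smooth $w$ with the stated support. First I would pass to geodesic normal coordinates on $M$ centred at $p$, identifying a neighbourhood of $p$ in $M\times\R_+$ with a subset of $\R^{n+1}_+=\{(x',y_{n+1}):y_{n+1}>0\}$, so that the bulk equation becomes $\p_{n+1}y_{n+1}^{1-2s}\p_{n+1}w+y_{n+1}^{1-2s}\operatorname{div}(a\nabla w)=f$ with $a=a(x')$, $a(0)=\mathrm{Id}$ and $|a-\mathrm{Id}|\lesssim|x'|$; the discrepancy $y_{n+1}^{1-2s}\operatorname{div}((a-\mathrm{Id})\nabla w)$ is kept as a perturbation. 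Next I would introduce polar coordinates $x=\rho\theta$ with $\rho=(|x'|^2+y_{n+1}^2)^{1/2}=r$ and $\theta\in S^n_+$, followed by the logarithmic substitution $t=\ln\rho$, and conjugate by a suitable power of $\rho$ so as to clear the degenerate and Jacobian factors. In the $(t,\theta)$‑variables the model operator then takes the normal form $\p_t^2+L_\theta-\big(\tfrac n2-s\big)^2$, where $L_\theta\le0$ is the Laplace--Beltrami operator on $S^n_+$ for the weight $\theta_{n+1}^{1-2s}$ together with its natural conormal condition on $\p S^n_+$; the nonlocal boundary condition becomes $\lim_{\theta_{n+1}\to0}\theta_{n+1}^{1-2s}\p_{\theta_{n+1}}v=c_s^{-1}e^{2st}\tilde V v$ on $\p S^n_+$, with $\tilde V$ denoting $V$ in the new coordinates and the factor $e^{2st}=\rho^{2s}$ reflecting that the conormal trace is of order $2s$; and the weight becomes $\phi=\phi(t)=-t+\tfrac1{10}\psi(t)$ with $\psi'(t)=\arctan t$, so that $\phi'(t)=-1+\tfrac1{10}\arctan t$ and $\phi''(t)=\tfrac1{10}(1+t^2)^{-1}>0$. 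Thus $\phi$ is convex in $t$ with $|\phi'|\sim1$, but the convexity degenerates as $t\to-\infty$, which is precisely what forces the weights $(1+t^2)^{-1/2}=(1+\ln r^2)^{-1/2}$ to be carried along and explains the shape of \eqref{eq:vCarl}.

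Next I would carry out the conjugation: with $v=e^{\tau\phi}w$ (after the above reductions), write $L_\tau v:=e^{\tau\phi}L(e^{-\tau\phi}v)=S_\tau v+A_\tau v$ as the sum of its formally symmetric and antisymmetric parts with respect to $\int_{S^n_+\times\R}\,\cdot\;\theta_{n+1}^{1-2s}\,d\theta\,dt$, and expand
\begin{equation*}
\|L_\tau v\|_{L^2}^2=\|S_\tau v\|_{L^2}^2+\|A_\tau v\|_{L^2}^2+\langle[S_\tau,A_\tau]v,v\rangle+\mathcal B .
\end{equation*}
The boundary contributions at $\rho=\epsilon$ and $\rho=r_0$ vanish by the support hypothesis (or carry a favourable sign by monotonicity of $\phi$ in $t$); the term $\mathcal B$ gathers the integrations by parts over $\p S^n_+\times\R$, i.e. over $\{y_{n+1}=0\}$. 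Inserting the boundary condition, the conormal trace of $v$ equals $c_s^{-1}e^{2st}\tilde V v$, and after one further integration by parts in $t$ along the boundary — which produces the term $e^{2st}\p_t\tilde V\,v^2$, the sole place where differentiability of $V$ enters — one sees that $\mathcal B$ is bounded by $\|V\|_{C^1}$ times a power of $\tau$ times boundary $L^2$‑quantities of $v$.

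The core step is the positive‑commutator bound
\begin{equation*}
\langle[S_\tau,A_\tau]v,v\rangle\gtrsim\tau^3\|(\phi'')^{1/2}v\|_{L^2}^2+\tau\|(\phi'')^{1/2}\nabla_{t,\theta}v\|_{L^2}^2-(\text{lower order}),
\end{equation*}
which rests on $-L_\theta\ge0$, on $\phi''>0$, on $|\phi'|\sim1$ and on the domination of $\phi'''$, $\phi^{(4)}$ by $\phi''$; the indefinite cross terms are disposed of by Cauchy--Schwarz into $\|S_\tau v\|_{L^2}^2+\|A_\tau v\|_{L^2}^2$. Undoing the substitutions, $\phi''=\tfrac1{10}(1+\ln r^2)^{-1}$ and $dt=\rho^{-1}d\rho$ turn these two terms into exactly the left‑hand side of \eqref{eq:vCarl}, with the powers $\tau^{3/2}$, $\tau^{1/2}$ (note $(\tau^{3/2})^2=\tau^3$ and $(\tau^{1/2})^2=\tau$) and the weights $r^{-1}(1+\ln r^2)^{-1/2}y_{n+1}^{(1-2s)/2}$ and $(1+\ln r^2)^{-1/2}y_{n+1}^{(1-2s)/2}$.

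Finally I would close the estimate for $\tau$ large by absorbing all errors: the metric perturbation $y_{n+1}^{1-2s}\operatorname{div}((a-\mathrm{Id})\nabla w)$ carries a gain $O(r)$ near $p$ and is dominated by the $\tau^{3/2}$‑ and $\tau^{1/2}$‑terms; the curvature‑type lower order terms from the change of variables are of lower order in $\tau$; and the boundary term $\mathcal B$ is treated by the sharp weighted trace--interpolation inequality $\|v(\cdot,0)\|_{L^2(\p S^n_+)}^2\lesssim\|\theta_{n+1}^{(1-2s)/2}\nabla v\|_{L^2}^{2(1-s)}\|\theta_{n+1}^{(1-2s)/2}v\|_{L^2}^{2s}+(\text{lower order})$ (valid since the trace of $H^1(\theta_{n+1}^{1-2s})$ lies in $H^s$), followed by Young's inequality with exponents $\tfrac1{1-s}$ and $\tfrac1s$ and by the exponential decay $e^{2st}\to0$ of the effective potential as $\rho\to0$, which compensates the degeneracy of $\phi''$; tracking the powers of $\tau$ one finds that $\mathcal B$ is absorbable precisely when $\tau^{2s}\gtrsim\|V\|_{C^1}$, i.e. when $\tau\gtrsim\|V\|_{C^1}^{1/(2s)}$. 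The main obstacle is exactly this last step, namely converting the conormal boundary term into an absorbable quantity by means of the nonlocal boundary condition and the weighted trace estimate: it is here that the exponent $\tfrac1{2s}$ — which recovers $\tfrac12$ as $s\to1$, in agreement with \cite{DoF}, \cite{Bakri} — is pinned down, and hence it propagates to Propositions \ref{prop:eigen}--\ref{prop:nodaldom}.
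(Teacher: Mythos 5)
Your outline follows essentially the same route as the paper's proof: conformal polar coordinates $(t,\theta)$ with $t=\ln r$, the logarithmically‑convex weight $\phi$, a positive‑commutator argument producing the $\phi''$‑weighted bulk terms $\tau^3\|(\phi'')^{1/2}v\|^2+\tau\|(\phi'')^{1/2}\nabla v\|^2$, and absorption of the conormal boundary contributions via the weighted trace/interpolation inequality, yielding precisely the threshold $\tau\gtrsim\|V\|_{C^1}^{1/(2s)}$. The only organizational differences are that the paper performs one further conjugation by $\theta_n^{(2s-1)/2}$ so as to work with an unweighted $L^2$ pairing, whereas you keep the degenerate weight in the measure; and that, because the induced spherical metric $b(t,\theta)$ is $t$‑dependent, the paper computes the cross pairing $(Su,Au)$ directly rather than invoking the abstract identity $\|L_\tau v\|^2=\|S\|^2+\|A\|^2+\langle[S,A]v,v\rangle+\mathcal B$, treating the $\partial_t b$ terms as controlled errors. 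Your explicit remark that the exponential factor $e^{2st}$ carried by the boundary term compensates the polynomial degeneracy $\phi''(t)\sim(1+t^2)^{-1}$ as $t\to-\infty$ is the correct mechanism behind the absorption and is only implicit in the paper's Step~3.
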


However, in contrast to the situation of local Schrödinger operators, a central ingredient in obtaining these estimates consists of controlling the respective boundary contributions on $M\times \{0\}$. Thus, the quantitative unique continuation problem for the fractional Laplacian has the flavor of a quantitative boundary unique continuation result in the spirit of \cite{KEA}, \cite{KW}, \cite{E} (however without the additional difficulty of rough boundaries). We deal with this issue via an appropriate interpolation/ trace estimate.\\
By virtue of three balls and covering arguments, the Carleman estimate (\ref{eq:vCarl}) allows to deduce a doubling property in the bulk with a (for eigenfunctions) optimal dependence on the parameter $\lambda$. 
Building on this, we present two strategies of passing to the respective boundary estimates. In the first and much less quantitative argument we rely on a blow-up technique which then leads to the desired bound on the vanishing order of eigenfunctions. Here, we argue indirectly by a reduction to the weak unique continuation principle. While this strategy is strong enough to yield the optimal bounds on the order of vanishing, it is not quantitative enough to deal with the nodal domain estimates. \\
In order to obtain these, we follow the strategy of Bellova and Lin \cite{BL14} which permits to deduce optimal boundary doubling estimates. Using an analytic extension of eigenfunctions of the fractional Laplacian, we then conclude the estimate on the size of nodal domains by applying an integral geometry estimate \cite{DoF}, \cite{HL10}.\\

Let us finally comment on the organization of the remainder of the article: After briefly motivating our problem and introducing our notational conventions, we present the proof of the crucial Carleman estimate (\ref{eq:vCarl}) in Section 2. Using this as well as elliptic estimates, it then becomes possible to deduce a three balls inequality and a quantified (bulk) doubling estimate. In Section 4, we prove the result on the vanishing rate by reducing the problem to the weak unique continuation setting. Finally, in Sections 5 and 6 we derive an (optimally scaling) boundary doubling estimate as well as bounds on the Hausdorff measure of nodal domains in the setting of analytic manifolds. In the appendix we recall interior and boundary regularity estimates for the generalized harmonic extension operator. 

\subsection{Motivation}
\subsubsection{Heurstics for the Fractional Laplacian on Compact Manifolds}

A motivating example for investigating the generalized Dirichlet-to-Neumann problem (\ref{eq:CSE}), (\ref{eq:GDN}), is given by eigenfunctions of the fractional Laplacian on a compact manifold. In its simplest form this corresponds to studying how fast (nontrivial) solutions, $w\in H^{s}_{g}(M)$, of
\begin{align}
\label{eq:frL}
(-\D_{g})^s w = \lambda w,
\end{align}  
for $\lambda \geq 0$ and $s\in(0,1)$ may vanish. In this context we interpret the fractional Laplacian on a smooth Riemannian manifold $(M,g)$ via its ``Caffarelli-Silvestre extension'' \cite{CaS}. In other words, equation (\ref{eq:frL}) is to be understood as an eigenfunction problem for the ``generalized Dirichlet-to-Neumann'' map of a ``generalized harmonic extension problem'': Considering a function $f\in H^{s}_g(M)$, $s\in(0,1)$, and a solution $\bar{w}\in H^{1}(y_{n+1}^{1-2s}d\vol_g dy_{n+1},M\times \R_+)$ of
\begin{align*}
(\p_{n+1} y_{n+1}^{1-2s} \p_{n+1}  + y_{n+1}^{1-2s} \D_{g}) \bar{w} &= 0 \mbox{ in } M\times \R_{+},\\
\bar{w} &= f \mbox{ on } M\times \R_{+},
\end{align*}
we define $(-\D_g)^s f:=-c_s\lim\limits_{y_{n+1}\rightarrow 0} y_{n+1}^{1-2s} \p_{n+1} \bar{w}$, where $c_s$ is an only $s$-dependent constant which has the precise value $c_s= \pi^{-\frac{n}{2}}2^{2s-1}\frac{\Gamma\left(\frac{n+2s}{2} \right)}{-\Gamma(-s)}$. Thus, for $s\in(0,1)$ the operator we consider is a generalized Dirichlet-to-Neumann map on $M$:
\begin{align*}
(-\D_g)^s: H^{s}_g(M) &\rightarrow H^{-s}_g(M)\\
f &\mapsto (-\D_g)^s f:= -c_s\lim\limits_{y_{n+1}\rightarrow 0} y_{n+1}^{1-2s} \p_{n+1} \bar{w}.
\end{align*}
As a self-adjoint operator with compact inverse, $(-\D_g)^s$ possesses an associated orthonormal basis of eigenfunctions which constitute the solutions of (\ref{eq:frL}).\\ 
In this setting without lower order coefficients $b$ and $c$ (c.f. (\ref{eq:CSE})), the problem can (at least heuristically) be reduced to a eigenvalue equation for a higher order Laplacian operator on the manifold $(M,g)$. This strongly suggests that the powers of $\lambda$ which are given in the previously stated results are optimal (at least for generic manifolds).\\

Let us elaborate on this: Assume that $s\in (0,1)\cap \Q$, $s=\frac{p}{q}$ with $p,q \in \N$ and $q$ minimal. As $(M,g)$ is a compact manifold, $-\D_g$ gives rise to a basis of eigenfunctions, $\varphi_k$, with corresponding eigenvalues $\lambda_k\geq 0$, i.e.
\begin{align*}
-\D_g \varphi_k = \lambda_k \varphi_k .
\end{align*}
Expanding $w(y')= \sum\limits_{k\in \N} \alpha_{k,0} \varphi_k(y')$, we have
\begin{align*}
\bar{w}(y',y_{n+1})= \sum\limits_{k\in \N} \alpha_k(y_{n+1})\varphi_k(y'),
\end{align*} 
with $\alpha_k(0)= \alpha_{k,0}$.
Thus, the eigenvalue problem (\ref{eq:frL}) formally reduces to
\begin{align*}
\sum\limits_{k\in \N}(\p_{n+1} y_{n+1}^{1-2s}\p_{n+1} \alpha_k - y_{n+1}^{1-2s}\lambda_k \alpha_k) \varphi_k &= 0 \mbox{ on } M \times \R_+,\\
- c_s \sum\limits_{k\in \N} \lim\limits_{y_{n+1}\rightarrow 0} y_{n+1}^{1-2s} \alpha_k'(y_{n+1}) \varphi_k(y') &= \lambda \sum\limits_{k\in \N} \alpha_{k,0} \varphi_k(y') \mbox{ on } M \times \{0\}.
\end{align*}
Solving this ODE thus implies the identity
\begin{align*}
\sum\limits_{k\in \N} \lambda_k^{s} \alpha_{k,0} \varphi_k(y') = \lambda \sum\limits_{k\in \N} \alpha_{k,0} \varphi_k(y').
\end{align*}
As a consequence, a $q$-fold application of the fractional Laplacian operator onto $w$ formally results in
\begin{align*}
\sum\limits_{k\in \N} \lambda_k^{p} \alpha_{k,0} \varphi_k(y') = \Lambda^{q}w(y') = \lambda^{q} \sum\limits_{k\in \N} \alpha_{k,0} \varphi_k(y'),
\end{align*}
which formally corresponds to
\begin{align*}
(-\D_{g})^{p} w = \lambda^{q} w \mbox{ on } M.
\end{align*}
Due to the results of Donnelly and Fefferman \cite{DoF} as well as the available results on higher order operators \cite{CK}, \cite{Zhu13}, this exactly corresponds to the vanishing orders given in Propositions \ref{prop:eigen} to \ref{prop:nodaldom}.

\subsubsection{The Steklov Problem}
\label{sec:Steklov}
A second motivation for addressing the eigenvalue problem for the generalized Dirichlet-to-Neumann operator (\ref{eq:CSE}), (\ref{eq:GDN}) stems from the Steklov problem (\ref{eq:Steklov}) which was recently investigated by Bellova and Lin \cite{BL14} as well as by Zelditch \cite{Zel14}. In this section we illustrate how it fits into our framework, c.f. \cite{BL14}. This, thus permits us to recover the results of Bellova \& Lin and Zelditch via our Carleman-inequality based argument.  \\ 

Due to the compactness and regularity of $\Omega$, there exists a radius $0<r<\infty$ such that for each boundary point $y_0\in \partial \Omega$ there exists a neighbourhood $B_{r}(y_0)$ such that
\begin{align*}
B_{r}(x_0)\cap \partial \Omega = \{(y',y_{n+1})\in B_{r}(x_0)| \ y_{n+1}= \Phi(y') \}.
\end{align*} 
Hence, we may flatten the boundary: Setting $F(y)=(y',y_{n+1}+\Phi(y'))$ we define a function $v:B_{r}^+(y'_{0},0) \subset \R^{n+1}_+ \rightarrow \R$,  $v(y)=(w\circ F)(y)=w(y', y_{n+1}+\Phi(y'))$. After multiplying with an appropriate cut-off function, $v$ satisfies an equation which is almost of the form (\ref{eq:CSE}). More precisely, the leading order operator is given by
$\nabla \cdot a \nabla w$ 
where 
\begin{align*}
a^{ij} = \begin{pmatrix} 1 & 0 & 0 &  ...& 0 & -\p_1 \Phi\\
0 & 1 & 0& ... & 0& - \p_2 \Phi\\
& . & .& .&. & \\
-\p_1 \Phi & - \p_2 \Phi & -\p_3 \Phi& ... & -\p_n \Phi  & 1
 \end{pmatrix} + 
\begin{pmatrix}
\nabla' \Phi \otimes \nabla' \Phi & 0\\
0 & 0
\end{pmatrix}
\end{align*}
and the boundary condition turns into
\begin{align*}
\Lambda(v)= \sqrt{1+|\nabla \Phi|^2}\lambda v.
\end{align*}
The lower order coefficients $b,c$ only depend on the geometry of the boundary of $\Omega$. \\

Unlike (\ref{eq:CSE}), the resulting problem is however not posed on a compact manifold $M$ without boundary but on $\R^{n+1}_+$. Yet, this setting suffices in order to deduce the desired Carleman inequality (which is a purely local argument) for $v$ in $\R^{n+1}_+$, c.f. Remark \ref{rmk:Steklov}. Using the non-degeneracy of the coordinate transformation $\Phi$, the Carleman estimates can then be rephrased as an inequality for $w$ in a neighbourhood of $\partial \Omega$ in $\Omega$. Thus, it becomes possible to recover the setting of a compact manifold, which then allows to conclude by exploiting compactness of the domain $\Omega$ and its boundary $\partial \Omega$, e.g. in the three balls argument.

\subsection{Notation}
\label{sec:not}
Let us briefly introduce some of the notation we will use in the sequel:
\begin{itemize}
\item Let $(M,g)$ be a compact manifold of dimension $n$. Then we denote the geodesic distance with respect to a given point $p\in M$ by $r_{p}$. We denote the volume form on $M$ by $d\vol_g$ and the volume form on $M\times \R_+$ by $d\vol_g dy_{n+1}$. We denote the respective $L^2$-based Hilbert spaces on $M$ by $H^{k}_g(M)$; similar notation is used on $M\times \R^+$. If we do not specify the metric in these spaces we always mean the Euclidean one.
\item We write  $B_{r}(p)$ for the geodesic balls with center $p \in M$ and radius $r$ (the distance is measured with respect to the point $p$; here we choose $r$ depending on $g$ sufficiently small such that the geodesic coordinates are uniquely defined). For a point $p \in M$ we use the notation $B_r^+(p)$ in order to refer to the respective half-ball with radius $r$ and center $(p,0)$ in the upper cylinder $M\times \R_+$. If there is no chance of misunderstanding we also suppress the center, $p$, of the ball.
\item Let $0<r_1<r_2$ be given radii and $p\in M$. Then we also use the notation $(r_1,r_2)$ and $(r_1,r_2)^+$ for the annuli $B_{r_2}(p)\setminus B_{r_1}(p) \subset M$ and $B_{r_2}^+(p)\setminus B_{r_1}^+(p) \subset M \times \R_+$ if it is clear around which point the annuli are taken.
\item We use $\nabla$ and $\nabla'$ in order to denote the gradients with respect to $(M\times \R, g\times id)$ and $(M,g)$.
\item In the sequel -- and in particular in the proof of the Carleman estimate -- we will switch between different coordinate systems (and functions). This allows to obtain an as simple as possible structure for the operator we are dealing with. In order to clarify the notation associated with the changes in the coordinate systems, we will always refer
\begin{itemize}
\item to a function in Cartesian variables as $w$,
\item to the associated and scaled function in conformal variables by $u$ and set $u(t,\theta) := e^{\frac{n-2s}{2}t}w(e^{t}\theta)$, 
\item to the function resulting from the $\theta_n$-conjugation by $v$ and define $v(t,\theta):= \theta^{\frac{1-2s}{2}}_n u(t,\theta)$. 
\end{itemize}
\end{itemize}

\section{Proof of the Carleman Estimate}

\begin{proof}[Proof of Proposition \ref{prop:Carl}]
\emph{Step 1: Choice of coordinates. }
We start by introducing Riemannian geodesic coordinates on $(M\times \R_+, g \times id)$ around the point $(p,0)\in M\times \R_+$, which allows to deduce the Carleman estimate along similar lines as in the Euclidean setting.
This change of coordinates straightens out the geodesics passing through the point $(p,0)\in M\times \R_+$. It leads to the new spherical metric $b_{k l}(r,\theta)$ and to a modified operator, which in leading order reads: 
\begin{align*}
&\theta_n^{1-2s}\frac{1}{r^n}\drr(r^{n+1-2s}\drr) + r^{-1-2s}\frac{1}{\sqrt{\det b_{k l}}}\p_{\theta_i} \theta_n^{1-2s} b_{kl}^{-1}(r,\theta)\sqrt{\det{b_{k l}}}\p_{\theta_{j}}.
\end{align*}
Here $\theta_n:= \frac{y_{n+1}}{|y|}$. In computing the new operator, we have suppressed the first order error
\begin{align*}
\theta_n^{1-2s}r^{1-2s}\frac{(\p_{r} \sqrt{\det b})}{\sqrt{\det b}} \p_r,
\end{align*}
which will be justified in Step 4, highlighting its role as a negligible contribution. 
In the sequel, we will also denote the spherical metric $b_{k l}(r,\theta)$ by $b(r,\theta)$ and ignore the first order term involving the $r$-derivatives of $b_{k l}$. It can be treated as a controlled error contribution, which can be absorbed in the positive bulk terms. More precisely, for a sufficiently small radius $r$ (depending only on the manifold $(M,g)$) we have the following error bounds:
\begin{equation}
\label{eq:err1}
\begin{split}
|\p_r(b_{kl}^{-1})| &\leq C(b_{kl}^{-1}),\\
|\p_r(\det b)| &\leq C,\\
0<\lambda \leq \det b &\leq \Lambda. 
\end{split}
\end{equation}
We carry out the change into conformal coordinates, i.e. $r=e^{t}$, which yields $\drr = e^{-t}\dt$. This results in
\begin{align}
\label{eq:conf}
e^{(-1-2s)t}\left[  \theta_n^{1-2s} \dt^2 + \theta_n^{1-2s} \left(n-2s \right)\dt +  \tilde{\nabla}_{S^{n}} \cdot \theta_n^{1-2s} \tilde{\nabla}_{S^n} \right],
\end{align}
where for brevity of notation we used $\tilde{\nabla}_{S^n}$ to denote the spherical gradient with respect to our (non-standard) spherical metric. Thus, the error estimate (\ref{eq:err1}) turns into
\begin{equation}
\label{eq:err2}
\begin{split}
|\p_t(b_{kl}^{-1})| &\leq Ce^{t}(b_{kl}^{-1}),\\
|\p_t (\det b)| &\leq Ce^t,\\
0<\lambda \leq \det b &\leq \Lambda. 
\end{split}
\end{equation}
Conjugating (\ref{eq:conf}) with $e^{-\frac{n-2s}{2}t}$ (which corresponds to setting $w=e^{- \frac{n-2s}{2}t}u$) and multiplying the operator with $e^{(1+2s)t}$, results in
\begin{align*}
\theta_n^{1-2s}\left(\dt^2 - \frac{(n-2s)^2}{4}\right) +  \tilde{\nabla}_{S^{n}}\cdot \theta_n^{1-2s} \tilde{\nabla}_{S^n} .
\end{align*}
Finally, as a last step in the choice of appropriate coordinates, we conjugate the equation with the weight $\theta_n^{\frac{1-2s}{2}}$, i.e. we set $u= \theta_n^{\frac{2s-1}{2}}v$ and multiply the operator with $\theta_n^{-\frac{1-2s}{2}}$. This then results in a symmetric spherical operator in which the spherical and the radial variables are strictly separated:
\begin{align*}
\left(\dt^2 - \frac{(n-2s)^2}{4}\right) +  \theta_n^{\frac{2s-1}{2}}\tilde{\nabla}_{S^{n}}\cdot \theta_n^{1-2s}  \tilde{\nabla}_{S^n}\theta_n^{\frac{2s-1}{2}}.
\end{align*}
In this setting the expression for the boundary values turn into:
\begin{align*}
-\lim\limits_{\theta_{n}\rightarrow 0} \theta_{n}^{1-2s} \nu \cdot \tilde{\nabla}_{S^n}\theta_n^{\frac{2s-1}{2}}v &= \lim\limits_{\theta_{n}\rightarrow 0}\lambda e^{2st}\theta_n^{\frac{2s-1}{2}}v,
\end{align*}
In analogy to the flat case and with a slight abuse of notation, we use the symbol $d \theta$ to denote the volume form of our (non-standard) spherical metric. In the sequel all the integrals will be computed with respect to this volume form.\\

\emph{Step 2: Computing the commutator.}
We conjugate with an only $t$-dependent weight, $\phi$. This leads to the following ``symmetric and antisymmetric'' parts of the operator:
\begin{equation*}
\begin{split}
S & = \dt^2 + \tau^2(\dt \phi)^2 - \frac{(n-2s)^2}{4} + \theta_n^{\frac{2s-1}{2}}\tilde{\nabla}_{S^n}\cdot \theta_n^{1-2s} \tilde{\nabla}_{S^n} \theta_n^{\frac{2s-1}{2}},\\
A & = -2\tau (\dt \phi)\dt - \tau \dt^2 \phi.
\end{split}
\end{equation*}
We point out that these contributions are not precisely symmetric and antisymmetric with respect to our non-standard spherical metric  (in particular boundary contributions have to be taken care of), yet this separation of the full operator into $S$ and $A$ proves to be convenient for the calculations of the pairing $(Su,Au)_{L^2_b(S^n_+\times \R)}$. In these calculations one has to be slightly more careful than in the case of the standard sphere as the metric tensor, and thus the volume element, also depends on the $t$-variable. As a consequence, it is more convenient to calculate the quantities appearing in $(Su,Au)_{L^2_b(S^n_+ \times \R)}$ directly, instead of symmetrizing and antisymmetrizing the respective contributions (however, the calculations are of course motivated by the commutator estimates). 
Indeed, setting $N:=[0,2\pi)\times [-\frac{\pi}{2},\frac{\pi}{2})\times ... \times [0,\frac{\pi}{2})\times \R$ and $\p N := [0,2\pi)\times [-\frac{\pi}{2},\frac{\pi}{2}) \times ... \times \{0\} \times \R$, the most important contributions are given by:
\begin{align*}
&-2\tau(\dt^2 v, \sqrt{\det b}(\dt \phi) \dt v)_{L^2(N )} \\
	& \quad \quad \quad =  \tau(\dt v, \sqrt{\det b} (\dt^2 \phi) \dt v)_{L^2(N)}  + \tau(\dt v, (\dt \sqrt{\det b})\dt \phi \dt v)_{L^2(N)},\\
&-2\tau^3((\dt \phi)^2 v, \dt \phi \dt v \sqrt{\det b})_{L^2(N)}\\
	 & \quad \quad \quad =   6\tau^3 (v, (\dt^2 \phi) (\dt \phi)^2 v \sqrt{\det b})_{L^2(N)}
	 + 2\tau^3(v, (\dt \phi)^3(\dt \sqrt{\det b}) v )_{L^2(N)},\\
&-\tau (\theta_n^{\frac{2s-1}{2}} \p_{\theta_k}\sqrt{\det b} \theta_n^{1-2s} b^{k l} \p_{\theta_l} \theta_n^{\frac{2s-1}{2}} v, 2\dt \phi \dt v - \dt^2 \phi v )_{L^2(N)}\\
	& \quad \quad \quad =  \tau ( \theta_n^{1-2s} \p_{\theta_k} \theta_n^{\frac{2s-1}{2}} v, (\dt \phi)\dt (b^{kl} \sqrt{\det b}) \p_{\theta_l} \theta_n^{\frac{2s-1}{2}} v)_{L^2(N)} \\
	& \quad \quad \quad \quad + \tau  (\sqrt{\det b}\lim\limits_{\theta_n \rightarrow 0} \theta_n^{\frac{2s-1}{2}} \dt e^{2st}Vv, \lim\limits_{\theta_n \rightarrow 0} \theta_n^{\frac{2s-1}{2}} v)_{L^2(\p N)}\\
	  & \quad \quad \quad \quad  -\tau  (\sqrt{\det{b}}\lim\limits_{\theta_n \rightarrow 0}\theta_n^{\frac{2s-1}{2}}\dt v, 				\lim\limits_{\theta_n \rightarrow 0}e^{2st}V\theta_n^{\frac{2s-1}{2}} v)_{L^2(\partial N)},\\
 &-\tau(\dt^2 v, \sqrt{\det b}\dt^2 \phi v)_{L^2(N) } \\
	 & \quad \quad \quad =  \tau (\dt v, \sqrt{\det b} \dt^2 \phi \dt v )_{L^2(N)} + 
	\tau(\dt v, (\dt \sqrt{\det b})\dt^2 \phi v)_{L^2(N)}\\
	 & \quad \quad \quad\quad + \tau(\dt v,  \sqrt{\det b}\dt^3 \phi v)_{L^2(N)}.
\end{align*} 
All the contributions involving derivatives of the metric are treated as error contributions which is justified by the estimates from (\ref{eq:err2}).
Although we have not computed all the contributions appearing in the mixed term $(Au,Su)_{L^2_b(S^n_+ \times \R)}$, the ones from above constitute the main parts; the others are either only error terms in the sense that they involve derivatives of the metric or are of lower order. Hence, -- up to boundary terms -- we conclude that if $\phi$ is sufficiently pseudoconvex, the separation into $S$ and $A$ yields the following positive ``commutator'' contributions:
\begin{align*}
&4\tau^3 \left\| (\dt^2 \phi)^{\frac{1}{2}} \dt \phi v \right\|_{L^2_b( S^n_+ \times \R)}^2
+ 4\tau \left\| (\dt^2 \phi)^{\frac{1}{2}} \dt v \right\|_{L^2_b( S^n_+ \times \R)}^2,
\end{align*}
and an error which consists either of lower order contributions or bulk terms involving $t$ derivatives of the metric $b$, e.g. terms of the form
\begin{equation}
\begin{split}
\label{eq:uariableerr}
\tau^{3} \int\limits_{ S^{n}_+ \times \R} |\dt \phi| | v|^2 |\dt b| d\theta dt, \
\tau \int\limits_{ S^{n}_+ \times \R} |\dt \phi| |\tilde{\nabla} v|^2 |\dt b| d\theta dt.
\end{split}
\end{equation}
We remark that all integrals are calculated with respect to our non-standard spherical metric.
The error terms will be treated separately in Step 4.\\
Making use of the symmetric part of the conjugated operator and an integration by parts argument as in \cite{Rue14}, we infer that we also control spherical gradient contributions. Up to additional error and boundary terms this implies the following Carleman estimate (phrased in terms of the function $u$):
\begin{equation}
\label{vc:Carl}
\begin{split}
&c\tau \left\| \theta_n^{\frac{1-2s}{2}}(\dt^2 \phi)^{\frac{1}{2}}  \dt u \right\|_{L^2_b}^2
+ c\tau \left\| \theta_n^{\frac{1-2s}{2}} (\dt^2 \phi)^{\frac{1}{2}}  \tilde{\nabla}_{S^{n}} u \right\|_{L^2_b}^2 \\
&+ c\tau^3  \left\| \theta_n^{\frac{1-2s}{2}} (\dt^2 \phi)^{\frac{1}{2}}(\dt \phi) u \right\|_{L^2_b}^2
+ \left\| S (\theta^{\frac{1-2s}{2}}_{n} u) \right\|_{L^2_b}^2 + \left\| A(\theta^{\frac{1-2s}{2}}_{n} u) \right\|_{L^2_b}^2 \\
\lesssim & \ \left\| L_{\phi} \theta_n^{\frac{2s-1}{2}} u \right\|_{L^2_b}^2,
\end{split}
\end{equation}
with
\begin{align*}
L_{\phi} = & \ (\dt^2 + \tau^2 (\dt \phi)^2 - \frac{(n-1)^2}{4} - 2\tau(\dt \phi)\dt - \tau \dt^2 \phi) +  \theta_n^{\frac{2s-1}{2}} \tilde{\nabla}_{S^n}\cdot \theta_n^{1-2s} \tilde{\nabla}_{S^n} \theta_n^{\frac{2s-1}{2}}.
\end{align*}
It remains to discuss the boundary and error contributions. \\

\emph{Step 3: Treatment of the boundary contributions.}
We will deal with the boundary contributions via the trace estimate from \cite{Rue}:
\begin{align}
\label{eq:interpol}
\left\| u \right\|_{L^2_b(S^{n-1})}^2 \lesssim \tau^{2-2s}\left\| \theta_n^{\frac{1-2s}{2}} u \right\|_{L^2_b(S^{n}_+ )}^2 +  \tau^{-2s}\left\| \theta_n^{\frac{1-2s}{2}} \nabla u \right\|_{L^2_b( S^{n}_+ )}^2 
\end{align}
This permits us to absorb the boundary contributions in the bulk contributions if $\tau> \lambda^{\frac{1}{2s}}$. Hence, up to the remaining discussion of the error terms, this then establishes the desired Carleman inequality.\\ 
Computing all the boundary contributions which occur in the ``conjugation process'' of step 2 yields:
\begin{align*}
&\int\limits_{\p S^{n}_{+}\times \R}  (\dt^2 \phi)\lim\limits_{\theta_n \rightarrow 0} (\theta^{1-2s}_{n}\nu \cdot \tilde{\nabla}_{S^{n}}\theta^{\frac{2s-1}{2}}_{n}v) \lim\limits_{\theta_n \rightarrow 0}\theta^{\frac{2s-1}{2}}_{n}v d\theta dt \\
= & \ \int\limits_{\p S^{n}_{+}\times \R} (\dt^2 \phi) e^{2st} \lim\limits_{\theta_n \rightarrow 0} \theta_{n}^{2s-1} Vv^2 d\theta dt
\end{align*}
and
\begin{align*}
&4\tau \int\limits_{\p S^{n}_{+}\times \R}(\dt \phi) \lim\limits_{\theta_n \rightarrow 0}\theta^{1-2s}_{n} (\nu \cdot \tilde{\nabla}_{S^{n}}\theta^{\frac{2s-1}{2}}_{n}v)  \lim\limits_{\theta_n \rightarrow 0}\theta^{\frac{2s-1}{2}}_{n}\dt v d\theta dt \\
&+ 2\tau \int\limits_{\p S^{n}_{+}\times \R}(\dt^2 \phi) \lim\limits_{\theta_n \rightarrow 0} \theta^{1-2s}_{n} (\nu \cdot \tilde{\nabla}_{S^{n}}\theta^{\frac{2s-1}{2}}_{n}v) \lim\limits_{\theta_n \rightarrow 0} \theta^{ \frac{2s-1}{2}}_{n} v d\theta dt \\
= & \ 4\tau \int\limits_{\p S^{n}_{+}\times \R} (\dt \phi) e^{2st}\lim\limits_{\theta_n \rightarrow 0} \theta_{n}^{2s-1} V v \dt v d\theta dt \\
& \ +  2\tau \int\limits_{\p S^{n}_{+}\times \R} (\dt^2 \phi)e^{2st} \lim\limits_{\theta_n \rightarrow 0} \theta_{n}^{2s-1}  V v^2 d\theta dt.
\end{align*}
Here the first contribution comes from the gradient estimate while the second originates from the ``symmetrization'' of the operator. Carrying out an integration by parts in the $t$-variable, all the boundary terms reduce to $L^2$ boundary integrals. 
We remark that these terms are all of ``subcritical scaling'' in the $t$ variable and (using the support assumption) can be bounded by
\begin{align*}
\tau \int\limits_{(-\infty,t_0) \times S^{n-1}} \left\| V \right\|_{C^1(M)} e^{2st} u^2 dt d\theta \lesssim & \ \left\| V \right\|_{C^1(M)}  \tau^{3-2s} \left\| \theta_n^{\frac{1-2s}{2}} u \right\|_{L^2_b((-\infty,t_0)  \times S^n_+)}^2\\
& + \left\| V \right\|_{C^1(M)}  \tau^{1-2s} \left\| \theta_n^{\frac{1-2s}{2}} \nabla u \right\|_{L^2_b((-\infty,t_0)  \times S^n_+ )}^2.
\end{align*}
As, due to the choice of $\tau$, we have $\left\| V \right\|_{C^1(M)}  \tau^{-2s} \lesssim 1$, the terms on the right hand side can be controlled by the bulk contributions:
\begin{equation}
\label{eq:int7}
\begin{split} 
\tau \int\limits_{(-\infty,t_0)  \times S^{n-1}} \left\| V \right\|_{C^1(M)}  e^{2st} u^2 dt d\theta \lesssim & \  \tau^{3} \left\| \theta_n^{\frac{1-2s}{2}} u \right\|_{L^2_b((-\infty,t_0)  \times S^n_+)}^2 \\
& \ +  \tau \left\| \theta_n^{\frac{1-2s}{2}} \nabla u \right\|_{L^2_b((-\infty,t_0)  \times S^n_+ )}^2.
\end{split}
\end{equation}

\emph{Step 4: Treatment of the error contributions.} We comment on the first order error terms from the conjugation process.  Instead of including these contributions -- which, in the following, we denote by (Er) -- in the commutator calculation, we treat them as errors:
\begin{align*}
\left\| e^{\tau \phi} L w \right\|_{L^2} = \left\| (S + A + Er)u \right\|_{L^2}
\geq \left\| (S+A)u \right\|_{L^2} - \left\| (Er)u\right\|_{L^2}.
\end{align*}
Due to the smoothness of $g$, the estimates from (\ref{eq:err1}) and (\ref{eq:err2}) and the fact that these terms are only of first order, it is possible to absorb these specific errors into the positive commutator contributions which were deduced in step 2.
\end{proof}

\begin{rmk}
It is possible to lower the regularity of the metric significantly in deducing the Carleman estimate. In fact Lipschitz regularity suffices. As this involved additional technical difficulties we have chosen to work in a smooth setting.
\end{rmk}

\begin{rmk}
\label{rmk:Steklov}
Going through the proof of the Carleman inequality one observes that it remains true for the setting of the transformed Steklov problem as described in \ref{sec:Steklov}. Here one uses geodesic coordinates which are induced by the full operator, in particular it is not necessary to first deduce tangential and then normal geodesic coordinates.
\end{rmk}

\begin{rmk}
\label{rmk:anti}
The Carleman estimate can be slightly sharpened for functions which are compactly supported in annuli around a given point $(p,0)\in M\times \R_+$. 
Assuming that $w$ is supported in $\{\delta \leq |y| \leq R \}$ (here $p=0$ but similar estimates hold for arbitrary $p$) we have
\begin{align*}
&\tau^2 \delta^{-2} \left\| e^{\tau \phi} y_{n+1}^{\frac{1-2s}{2}} w \right\|_{L^2((\delta, c\delta)^+) }^2 \lesssim  \ \left\|e^{\tau \phi}  y_{n+1}^{\frac{2s-1}{2}} |y| \nabla\cdot y^{1-2s}_{n+1}\nabla w \right\|_{L^2((\delta, R)^+) }^2.
\end{align*}
These stronger estimate do not feature the logarithmic loss of the remaining bulk contributions and thus play an important role in proving the doubling property. The estimates are obtained from the antisymmetric part of the operator via an application of Poincar\'e's or Hardy's inequality, c.f. \cite{Rue14} for a detailed derivation.
\end{rmk}

\section[Doubling]{Consequences of the Carleman Estimate \ref{prop:Carl}: Three Balls Inequality and Doubling}
\subsection{Three Balls Inequalities}

In this section we present some of the consequences of the Carleman estimate (\ref{eq:vCarl}). In this context, the main aim is to deduce doubling properties for solutions of the Caffarelli-Silvestre extension (\ref{eq:CSE}) in balls located at the boundary of the domain. To this end, we argue via a combination of three balls and covering arguments which exploit the compactness of our manifold. \\

We begin by proving a three balls theorem in the bulk. Here the main difficulty lies in obtaining sufficiently good bounds on the boundary contributions. In order to avoid losses in the doubling property which may originate from working on too small balls, we do not only rely on the Hardy-trace estimate, as this would enforce a restriction onto balls of the size $\sim \left\|V \right\|_{L^2}^{- \frac{1}{2s}}$. Such a restriction would in fact lead to a similar constraint as the one Bellova and Lin \cite{BL14} have to face and which is also ultimately responsible for the losses in their doubling estimate. Thus, instead, we employ the weighted interpolation inequality from \cite{Rue}. This yields sufficient freedom to obtain the optimal ``bulk doubling property''.

\begin{prop}[Three balls inequality]
\label{prop:3ba}
Let $(M,g)$ be a smooth, compact Riemannian manifold without boundary. Let $V:M\rightarrow \R$ be differentiable with $\left\| V\right\|_{C^1(M)}<\infty$. Assume that $s\in(0,1)$ and $w: M\times \R_+ \rightarrow \R$, $w\in H^{1}(y_{n+1}^{1-2s}d\vol_g d y_{n+1}, M\times \R_+)$, solves
\begin{align*}
\p_{n+1} y_{n+1}^{1-2s} \p_{n+1} w + y_{n+1}^{1-2s} \D_g w + y_{n+1}^{1-2s} b \cdot \nabla w + y_{n+1}^{1-2s}c w &= 0 \mbox{ in } M\times \R_+,\\
-c_s\lim\limits_{y_{n+1}\rightarrow 0} y_{n+1}^{1-2s} \p_{n+1} w &= V w \mbox{ on } M \times \{0\},
\end{align*}
where $b$ is a bounded vector field and $c$ is bounded function on $M\times \R_{+}$. 
Then there exists a constant $C>0$, a radius $r_0>0$ (which both only depend on $(M,g)$ and the constants $s$, $\left\| b \right\|_{L^{\infty}}$, $\left\| c \right\|_{L^{\infty}}$) and a parameter $\alpha\in (0,1)$ such that for all radii $r$ with $0<r\leq r_0$ and any point $p\in M$ it holds
\begin{align*}
\left\| y_{n+1}^{\frac{1-2s}{2}} w \right\|_{L^2(B_{r}^+(p))} 
\lesssim & \ e^{C(\left\| V \right\|_{C^1(M)}^{\frac{1}{2s}}+1)} \left\| y_{n+1}^{\frac{1-2s}{2}} w \right\|_{L^2(B_{2 r}^+(p))}^{\alpha}  \left\| y_{n+1}^{\frac{1-2s}{2}} w \right\|_{L^2(B_{\frac{r}{2}}^+(p))}^{1- \alpha}.
\end{align*}
\end{prop}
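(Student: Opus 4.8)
The three balls inequality is a standard consequence of the Carleman estimate \eqref{eq:vCarl} once one sets up the right cutoff function and observes the convexity of the weight $\phi$. The plan is to localize $w$ to the half-annular region $B_{2r}^+(p) \setminus \overline{B_{r/2}^+(p)}$ by a suitable cutoff $\zeta$, apply \eqref{eq:vCarl} to $\zeta w$, and then play off the three scales against each other by using the monotonicity of $\phi(r) = -\ln r + \tfrac{1}{10}(\ln r \arctan(\ln r) - \tfrac12 \ln(1+\ln^2 r))$, which is increasing as $r \to 0$ (so $e^{\tau\phi}$ is large on small balls and small on large balls). Before this, I would reduce the problem with lower order terms $b, c$ to the pure Carleman setting: writing the equation as $\p_{n+1} y_{n+1}^{1-2s}\p_{n+1} w + y_{n+1}^{1-2s}\D_g w = f$ with $f = -y_{n+1}^{1-2s}(b \cdot \nabla w + c w)$, the term $\|e^{\tau\phi} r y_{n+1}^{(2s-1)/2} f\|_{L^2} \lesssim \|b\|_{L^\infty}\|e^{\tau\phi} r y_{n+1}^{(1-2s)/2}\nabla w\|_{L^2} + \|c\|_{L^\infty}\|e^{\tau\phi} r y_{n+1}^{(1-2s)/2} w\|_{L^2}$ can be absorbed into the left side of \eqref{eq:vCarl} for $\tau$ large (here one uses $r \le r_0$ small so that $r^{-1}(1+\ln^2 r)^{-1/2}$ dominates, and the $\tau^{3/2}, \tau^{1/2}$ prefactors beat the $O(1)$ coefficients $\|b\|_{L^\infty}, \|c\|_{L^\infty}$).

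\textbf{The cutoff argument.} Choose $\zeta \in C_c^\infty$ with $\zeta \equiv 1$ on $B_{r}^+ \setminus B_{r/2}^+$ and supported in $B_{2r}^+ \setminus B_{r/4}^+$ (adjusting inner radii slightly so the support condition $\supp(\zeta w) \subset \overline{B_{r_0}^+}\setminus\overline{B_\epsilon^+}$ of Proposition \ref{prop:Carl} holds --- strictly, one needs $w$ itself to vanish near $(p,0)$, which is why one first applies the argument to $w$ replaced by $w$ times a cutoff vanishing at the vertex and handles the error, or appeals to a density/approximation argument as in \cite{Rue14}). Applying \eqref{eq:vCarl} to $\zeta w$, the right side produces $f$-contributions plus commutator terms $[\p_{n+1}y_{n+1}^{1-2s}\p_{n+1} + y_{n+1}^{1-2s}\D_g, \zeta]w$, which are first order in $w$ and supported only in the two transition annuli $A_1 := B_{r/2}^+ \setminus B_{r/4}^+$ and $A_2 := B_{2r}^+ \setminus B_{r}^+$. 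On the left side, restricting the integration to $B_r^+ \setminus B_{r/2}^+$ where $\zeta \equiv 1$ and using that $\phi$ is decreasing in $r$, one gets a lower bound of the form $c\, e^{\tau \phi(r)} \|y_{n+1}^{(1-2s)/2} w\|_{L^2(B_r^+ \setminus B_{r/2}^+)}$ (absorbing the $r$-dependent algebraic weights into constants, since $r \le r_0$). On the right, the commutator terms on $A_1$ are bounded by $C e^{\tau\phi(r/4)} r^{-2}\|y_{n+1}^{(1-2s)/2} w\|_{H^1(A_1)}$ and those on $A_2$ by $C e^{\tau\phi(r)} r^{-2}\|y_{n+1}^{(1-2s)/2} w\|_{H^1(A_2)}$; here I would invoke the Caffarelli--Silvestre Caccioppoli/elliptic estimate (recalled in the appendix of the paper) to upgrade the $H^1$ norms on slightly enlarged annuli to $L^2$ norms of $y_{n+1}^{(1-2s)/2} w$ on $B_{r/2}^+$ and $B_{2r}^+$ respectively.

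\textbf{Balancing the exponentials.} After these bounds, dividing through by $e^{\tau\phi(r/2)}$ yields a Hadamard-type inequality
\[
\|y_{n+1}^{\frac{1-2s}{2}} w\|_{L^2(B_r^+ \setminus B_{r/2}^+)} \lesssim e^{-\tau(\phi(r/2)-\phi(r))}\|y_{n+1}^{\frac{1-2s}{2}} w\|_{L^2(B_{2r}^+)} + e^{\tau(\phi(r/4)-\phi(r/2))}\|y_{n+1}^{\frac{1-2s}{2}} w\|_{L^2(B_{r/2}^+)},
\]
where $a := \phi(r/2) - \phi(r) > 0$ and $b := \phi(r/4) - \phi(r/2) > 0$ are comparable to absolute constants uniformly in $r \le r_0$ (this uses the asymptotics of $\phi$: $\phi(r) \sim -\ln r$ up to bounded corrections, so differences of $\phi$ at comparable radii are $O(1)$ and bounded below). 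Adding $\|y_{n+1}^{(1-2s)/2}w\|_{L^2(B_{r/2}^+)}$ to both sides gives the same inequality with $B_r^+$ on the left, then one optimizes in $\tau$: for the range $\tau \gtrsim \|V\|_{C^1}^{1/2s}$ allowed by Proposition \ref{prop:Carl}, choosing $\tau$ to balance the two right-hand terms (Young's inequality $xy \le x^{p}/p + y^{q}/q$) produces the geometric-mean bound with exponent $\alpha = b/(a+b) \in (0,1)$ independent of $r$, at the cost of the prefactor $e^{C(\|V\|_{C^1}^{1/2s}+1)}$ coming from the constraint on the minimal allowed $\tau$. \textbf{The main obstacle} I anticipate is the bookkeeping of the boundary contributions: the Carleman estimate \eqref{eq:vCarl} already absorbs the boundary term $Vw$ using the trace/interpolation inequality \eqref{eq:interpol}, but one must check that the localization by $\zeta$ does not produce new uncontrolled boundary terms on $M \times \{0\}$ --- this works because $\zeta$ is radial in the geodesic half-ball coordinates and the cutoff commutes appropriately with the Neumann-type boundary condition, so the localized boundary term is still $\zeta^2 V w$ (plus lower order), which \eqref{eq:vCarl} handles --- together with ensuring the support hypothesis near the vertex $(p,0)$ is met, which is the one genuinely delicate point and is dealt with exactly as in \cite{Rue14} by a preliminary regularization.
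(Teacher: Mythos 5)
Your overall strategy is essentially the same as the paper's: apply the Carleman estimate (\ref{eq:vCarl}) to $\zeta w$ for a radial cutoff $\zeta$ supported in a half-annulus, bound the resulting gradient commutators via a weighted Caccioppoli estimate whose boundary term is absorbed through the interpolation inequality (\ref{eq:interpol}), and then exploit the strict monotonicity of $\phi$ together with the constraint $\tau \gtrsim \|V\|_{C^1}^{1/2s}$ to produce the exponent $\alpha$. The handling of the lower order terms $b,c$ by moving them into $f$ and absorbing them for large $\tau$ is also correct, and your worry about the support hypothesis near the vertex is unfounded: since $\supp\zeta \subset \overline{B_{2r}^+}\setminus B_{r/4}^+$, the function $\zeta w$ automatically vanishes near $(p,0)$, so no preliminary regularization is needed.

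There is, however, a genuine gap in the choice of radii, and the ``Hadamard-type inequality'' you display does not follow. You take $\zeta\equiv 1$ on $(r/2,r)^+$ and supported in $(r/4,2r)^+$, so the two transition annuli are $A_1=(r/4,r/2)^+$ and $A_2=(r,2r)^+$. The trouble is that $A_2$ begins exactly where the middle annulus $(r/2,r)^+$ ends. Since $\phi$ is decreasing, the infimum of $e^{\tau\phi}$ on the middle annulus is $e^{\tau\phi(r)}$, and the supremum of $e^{\tau\phi}$ on $A_2$ is also $e^{\tau\phi(r)}$. After dividing by $e^{\tau\phi(r)}$ the far commutator term carries \emph{no} decaying factor in $\tau$, so there is nothing to trade against the growing factor $e^{\tau(\phi(r/4)-\phi(r))}$ on the near term, and no three balls inequality comes out. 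Your display claims that dividing by $e^{\tau\phi(r/2)}$ gives $e^{-\tau(\phi(r/2)-\phi(r))}$ on the far term, but if you divide by $e^{\tau\phi(r/2)}$ the \emph{left} side also picks up the same factor $e^{-\tau(\phi(r/2)-\phi(r))}$ (since its lower bound was $e^{\tau\phi(r)}$, which you correctly stated), and after cancellation the far term again has coefficient $O(1)$. What is needed is a strict gap between the outer edge of the middle annulus and the inner edge of the far transition annulus, precisely as the paper arranges: $\eta\equiv 1$ on $(r/3,3r/2)^+$, supported in $(r/4,5r/3)^+$, with the left side restricted to $(r/3,r)^+$ and the far transition $(3r/2,5r/3)^+$; then the far factor is $e^{\tau(\phi(3r/2)-\phi(r))}=e^{-\tau B}$ with $B>0$ bounded away from zero. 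Changing your cutoff to (say) $\zeta\equiv 1$ on $(r/4,r)^+$, supported in $(r/8,2r)^+$, would repair the argument; everything else in your proposal then goes through and reproduces the paper's proof.
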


\begin{rmk}
\label{rmk:ratios}
\begin{itemize}
\item We point out that the ratios of the balls involved in the three balls inequality are flexible: Instead of using the radii $r$, $2r$ and $\frac{r}{2}$ for some $0<r\leq r_0(g)$, it would have been possible to use any other triple $(r_1,r_2,r_3)$ with $0<r_1\leq r_2 \leq r_3\leq r_0(t)$ which would then have yielded the estimate
\begin{align*}
\left\| y_{n+1}^{\frac{1-2s}{2}} w \right\|_{L^2(B_{r_2}^+)} 
\lesssim & \ e^{C(\left\| V \right\|_{C^1(M)}^{\frac{1}{2s}}+ 1)} \left\| y_{n+1}^{\frac{1-2s}{2}} w \right\|_{L^2(B_{r_3}^+)} ^{\alpha}  \left\| y_{n+1}^{\frac{1-2s}{2}} w \right\|_{L^2(B_{r_1}^+)} ^{1- \alpha},
\end{align*}
where the respective constant depends on the choice of $(r_1,r_2,r_3)$.
\item For $V=\lambda$ the three balls inequality reads
\begin{align*}
\left\| y_{n+1}^{\frac{1-2s}{2}} w \right\|_{L^2(B_{r}^+(p))} 
\lesssim & \ e^{C(\lambda^{\frac{1}{2s}}+1)} \left\| y_{n+1}^{\frac{1-2s}{2}} w \right\|_{L^2(B_{2 r}^+(p))}^{\alpha}  \left\| y_{n+1}^{\frac{1-2s}{2}} w \right\|_{L^2(B_{\frac{r}{2}}^+(p))}^{1- \alpha}.
\end{align*}
\end{itemize}
\end{rmk}

\begin{proof}
We argue by applying the Carleman inequality (\ref{eq:vCarl}) in combination with an elliptic/Caccioppoli estimate in an appropriately chosen annulus around $p$. We recall Caccioppoli's estimate:
Let $\psi$ be a radial cut-off function supported in an annulus given by $0<\frac{\tilde{r}_0}{2} \leq|y| \leq  2r_{1}<\infty$. Then,
\begin{equation}
\begin{split}
\label{eq:ellreg}
\left\| y_{n+1}^{\frac{1-2s}{2}} \nabla (w\psi) \right\|_{L^2((\frac{\tilde{r}_{0}}{2},2r_{1})^+)}^2
\lesssim & \ \tilde{r}_{0}^{-2}\left\| y_{n+1}^{\frac{1-2s}{2}}  w \right\|_{L^2((\frac{\tilde{r}_{0}}{2},2r_{1})^+)}^2 \\& + \int\limits_{ (\frac{\tilde{r}_{0}}{2},2r_{1})} \psi w \lim\limits_{y_{n+1}\rightarrow 0} y^{1-2s}_{n+1} \p_{n+1}( \psi w) dy\\
\lesssim & \ \tilde{r}_{0}^{-2}\left\| y_{n+1}^{\frac{1-2s}{2}}  w \right\|_{L^2((\frac{\tilde{r}_{0}}{2},2r_{1})^+)}^2  \\
& \ + \left\| V \right\|_{C^1(M)}\int\limits_{ (\frac{\tilde{r}_{0}}{2},2r_{1})} \psi^2 w^2  dy,
\end{split}
\end{equation}
with $0<\tilde{r}_{0}<r_{1}<\infty$. Here we used that normal derivatives which fall on $\psi$ vanish due to the radial dependence of $\psi$. For notational convenience, we in the sequel assume $\tilde{r}_0 \leq 1$ and $\left\| V\right\|_{C^1} \geq 1$. Aiming at a ``bulk estimate'', we absorb the boundary contributions on the right hand side of (\ref{eq:ellreg}) by exploiting the interpolation estimate (\ref{eq:interpol}) from \cite{Rue}.
Hence,
\begin{align*}
\left\| V \right\|_{C^1(M)} \int\limits_{ (\frac{\tilde{r}_{0}}{2},2r_{1}) } \psi^2 w^2  dy
 \lesssim & \ \left\| V \right\|_{C^1(M)} \tau^{2-2s} \left\| y_{n+1}^{\frac{1-2s}{2}} \psi w \right\|_{L^2((\frac{\tilde{r}_{0}}{2},2r_{1})^+)}^2 \\
& + \left\| V \right\|_{C^1(M)} \tau^{-2s} \left\| y_{n+1}^{\frac{1-2s}{2}} \nabla (\psi w) \right\|_{L^2((\frac{\tilde{r}_{0}}{2},2r_{1})^+)}^2.
\end{align*}
Choosing $\tau\geq C\left\| V \right\|_{C^1(M)}^{\frac{1}{2s}}$, for an appropriately large constant $C>0$, the gradient contribution can be absorbed in the left hand side of (\ref{eq:ellreg}). Thus, the Caccioppoli estimate reduces to 
\begin{equation}
\begin{split}
\label{eq:ellreg1}
\left\| y_{n+1}^{\frac{1-2s}{2}} \nabla (w\psi) \right\|_{L^2((\frac{\tilde{r}_{0}}{2},2r_{1})^+)}^2
\lesssim & \ \left\| V \right\|_{C^1(M)} \tau^{2-2s} \tilde{r}_{0}^{-2}\left\| y_{n+1}^{\frac{1-2s}{2}}  w \right\|_{L^2((\frac{\tilde{r}_{0}}{2},2r_{1})^+)}^2,  
\end{split}
\end{equation}
where $\tau\geq \left\| V \right\|_{C^1(M)}^{\frac{1}{2s}}$ and $0<r_0<r_1<\infty$.\\
Keeping the elliptic estimate in mind, we apply the Carleman inequality from Proposition \ref{prop:Carl}: Let $\eta$ be a radial cut-off function, which is equal to one on the annulus $|y|\in (\frac{r}{3}, \frac{3r}{2})$ and vanishes outside of the annulus $|y|\in (\frac{r}{4}, \frac{5r}{3})$. 
Inserting $\eta w$ into the Carleman estimate, using the elliptic estimate (\ref{eq:ellreg1}) as well as the explicit form of the boundary contribution, we obtain 
\begin{align*}
\tau \left\| e^{\tau \phi} y^{\frac{1-2s}{2}}_{n+1} w \right\|_{L^2((\frac{r}{3}, \frac{3r}{2})^+)}^2 
\lesssim & \  \left\| V \right\|_{C^1(M)} \tau^{2-2s} \left( e^{2 \tau \phi(r/4)} \left\|  y^{\frac{1-2s}{2}}_{n+1} w \right\|_{L^2((\frac{r}{4}, \frac{r}{3})^+)}^2 \right. \\
& \  \left. +  \ e^{2\tau \phi(3r/2)} \left\| y^{\frac{1-2s}{2}}_{n+1} w \right\|_{L^2((\frac{3r}{2},\frac{5r}{3})^+)}^2\right).
\end{align*} 
Using the monotonicity properties of $\phi$, we infer
\begin{align*} 
\left\|  y^{\frac{1-2s}{2}}_{n+1} w \right\|_{L^2((\frac{r}{3}, r)^+)}^2 
\lesssim & \  \left\| V \right\|_{C^1(M)} \tau^{2-2s} \left( e^{2\tau(\phi(r/4)-\phi(r))}\left\|  y^{\frac{1-2s}{2}}_{n+1} w \right\|_{L^2((\frac{r}{4}, \frac{r}{3})^+)}^2 \right.\\
& \ \left. + \ e^{2\tau(\phi(3r/2)- \phi(r))} \left\| y^{\frac{1-2s}{2}}_{n+1} w \right\|_{L^2((\frac{3r}{2},\frac{5r}{3})^+)}^2 \right).
\end{align*} 
Adding $ \left\|  y^{\frac{1-2s}{2}}_{n+1} w \right\|_{L^2(0,\frac{r}{3})^+}^2 $ to both sides (and absorbing it in the first contribution of the right hand side), setting $A:= 2(\phi(r/4)-\phi(r))$ as well as $B:= 2(\phi(r)-\phi(3r/2))$ (and noticing that these are can be controlled independently of $r$ as a consequence of the choice of our weight function) results in
\begin{equation}
\label{eq:aux}
\begin{split}
 \left\|  y^{\frac{1-2s}{2}}_{n+1} w \right\|_{L^2(B_{ r}^+)}^2
\lesssim & \ \left\| V \right\|_{C^1(M)} \tau^{2-2s} \left( e^{\tau A} \left\|  y^{\frac{1-2s}{2}}_{n+1} w \right\|_{L^2(B_{r/2}^+)}^2 \right.\\
& \left. +  \ e^{-\tau B} \left\| y^{\frac{1-2s}{2}}_{n+1} w \right\|_{L^2(B_{2r}^+)}^2 \right).
\end{split}
\end{equation}
We now choose the parameter $\tau$ such that the second contribution on the right hand side of (\ref{eq:aux}) can be absorbed in the left hand side of (\ref{eq:aux}). This can, for instance, by achieved by defining
\begin{align*}
\tau \geq  -\frac{1}{B} \ln \left( \frac{1}{\left\| V \right\|_{C^1(M)}} \frac{ \left\| y^{\frac{1-2s}{2}}_{n+1} w \right\|_{L^2(B_{r}^+)} }{  \left\|y^{\frac{1-2s}{2}}_{n+1} w  \right\|_{L^2(B_{2r}^+)} } \right). 
\end{align*}
As the application of the Carleman estimate and of (\ref{eq:ellreg1}) requires $\tau \geq \left\| V \right\|_{C^1(M)}^{\frac{1}{2s}}$, we finally set
\begin{align*}
\tau =  -\frac{1}{B} \ln \left( \frac{1}{\left\| V \right\|_{C^1(M)}} \frac{ \left\| y^{\frac{1-2s}{2}}_{n+1} w \right\|_{L^2(B_{r}^+)} }{  \left\|y^{\frac{1-2s}{2}}_{n+1} w  \right\|_{L^2(B_{2r}^+)} } \right) + C\left\| V \right\|_{C^1(M)}^{\frac{1}{2s}}. 
\end{align*}
For $\alpha = \frac{A}{A+B}$ this then yields the desired estimate:
\begin{align*}
\left\| y_{n+1}^{\frac{1-2s}{2}} w \right\|_{L^2(B_{r}^+)}
\leq & \ e^{C(\left\| V \right\|_{C^1(M)}^{\frac{1}{2s}}+1)}\left\| y_{n+1}^{\frac{1-2s}{2}} w \right\|_{L^2(B_{2r}^+)} ^{\alpha} \left\| y_{n+1}^{\frac{1-2s}{2}} w \right\|_{L^2(B_{r/2}^+)}^{1-\alpha}.
\end{align*}
\end{proof}

Using the three balls inequality, it is possible to obtain a ``global'' estimate. This is of central importance in deducing the desired (bulk and boundary) doubling property with a constant \emph{independent} of $w$. 

\begin{cor}[Overlapping balls argument, I]
\label{cor:obaI}
Let $(M,g)$ be a smooth, compact Riemannian manifold without boundary. Let $V:M\rightarrow \R$ be differentiable with $\left\| V\right\|_{C^1(M)}<\infty$ and let $r_0>0$ be the radius from the previous Lemma. Assume that $s\in(0,1)$ and $w: M\times \R_+ \rightarrow \R$, $w\in H^{1}(y_{n+1}^{1-2s}d\vol_g d y_{n+1}, M\times \R_+)$, solves
\begin{align*}
\p_{n+1} y_{n+1}^{1-2s} \p_{n+1} w + y_{n+1}^{1-2s} \D_g w + y_{n+1}^{1-2s} b \cdot \nabla w + y_{n+1}^{1-2s}c w &= 0 \mbox{ in } M\times \R_+,\\
-c_s \lim\limits_{y_{n+1}\rightarrow 0} y_{n+1}^{1-2s} \p_{n+1} w &= V w \mbox{ on } M \times \{0\},
\end{align*}
where $b$ is a bounded vector field and $c$ is bounded function on $M\times \R_{+}$. 
Then for radii with $0<r\lesssim r_0$ there exists a constant $C_r>0$ (which only depends on $r$, $(M,g)$ and the constants $s$, $\left\| b \right\|_{L^{\infty}}$, $\left\| c \right\|_{L^{\infty}}$) such that 
\begin{align*}
\left\| y_{n+1}^{\frac{1-2s}{2}} w \right\|_{L^2(B_{r}^+)}
 \geq  \ e^{-C_r (\left\| V \right\|_{C^1(M)}^{\frac{1}{2s}}+1)} \left\| y_{n+1}^{\frac{1-2s}{2}} w \right\|_{L^2(M \times [0,4r])} .
\end{align*}
\end{cor}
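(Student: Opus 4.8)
\textbf{Proof plan for Corollary \ref{cor:obaI}.} The plan is to derive the desired lower bound from the three balls inequality of Proposition \ref{prop:3ba} via a chain-of-balls propagation-of-smallness argument that exploits the compactness of $M$. First I would fix a base point $p \in M$ and a radius $r$ with $0 < r \lesssim r_0$, and observe that the three balls inequality, rewritten in the form
\begin{align*}
\left\| y_{n+1}^{\frac{1-2s}{2}} w \right\|_{L^2(B_{2\rho}^+(q))} \geq e^{-C(\left\| V \right\|_{C^1(M)}^{\frac{1}{2s}}+1)} \left\| y_{n+1}^{\frac{1-2s}{2}} w \right\|_{L^2(B_{\rho}^+(q))}^{\frac{1}{\alpha}} \left\| y_{n+1}^{\frac{1-2s}{2}} w \right\|_{L^2(B_{\rho/2}^+(q))}^{-\frac{1-\alpha}{\alpha}},
\end{align*}
allows us to transfer a lower bound on the $L^2$-mass in a small half-ball to a lower bound on the mass in a comparable half-ball with a slightly shifted center, at the cost of a multiplicative factor $e^{-C(\left\| V \right\|_{C^1}^{1/2s}+1)}$ and a fixed power (controlled by $\alpha$, which depends only on $(M,g)$ and the lower order norms). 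The flexibility in the ratios of the balls, recorded in Remark \ref{rmk:ratios}, lets me choose convenient radii at each step.

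The core of the argument is a standard covering/chain construction. Since $M$ is compact, I would cover $M \times [0,4r]$ by finitely many half-balls $B_{r/C'}^+(p_i)$, $i = 1, \dots, N$, where $N = N(M,g,r)$, and connect $p$ to each $p_i$ by a chain $p = q_0, q_1, \dots, q_{m} = p_i$ of points with $m \leq N$ and consecutive centers close enough that $B_{\rho/2}^+(q_{j+1}) \subset B_{2\rho}^+(q_j)$ for the appropriate scale $\rho$ comparable to $r$. Iterating the three balls inequality along such a chain — using on the ``large ball'' side the trivial bound $\| y_{n+1}^{(1-2s)/2} w \|_{L^2} \leq \| y_{n+1}^{(1-2s)/2} w \|_{L^2(M\times[0,4r])}$ and normalizing by the latter quantity — one obtains
\begin{align*}
\left\| y_{n+1}^{\frac{1-2s}{2}} w \right\|_{L^2(B_{r/C'}^+(p_i))} \geq e^{-C_m (\left\| V \right\|_{C^1(M)}^{\frac{1}{2s}}+1)} \left\| y_{n+1}^{\frac{1-2s}{2}} w \right\|_{L^2(B_r^+(p))}^{\beta_m} \left\| y_{n+1}^{\frac{1-2s}{2}} w \right\|_{L^2(M\times[0,4r])}^{1-\beta_m}
\end{align*}
for exponents $\beta_m = \alpha^m$ and constants $C_m$ depending only on $m \leq N$, hence only on $r$ and $(M,g)$ (and $s, \|b\|_{L^\infty}, \|c\|_{L^\infty}$). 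Summing the squares of these inequalities over $i = 1, \dots, N$ and using that the $B_{r/C'}^+(p_i)$ cover $M \times [0,4r]$ gives
\begin{align*}
\left\| y_{n+1}^{\frac{1-2s}{2}} w \right\|_{L^2(M\times[0,4r])}^2 \leq N e^{-2C_N(\left\| V \right\|_{C^1(M)}^{\frac{1}{2s}}+1)} \left\| y_{n+1}^{\frac{1-2s}{2}} w \right\|_{L^2(B_r^+(p))}^{2\beta_N} \left\| y_{n+1}^{\frac{1-2s}{2}} w \right\|_{L^2(M\times[0,4r])}^{2(1-\beta_N)},
\end{align*}
and dividing through by the last factor (which we may assume nonzero, else the statement is vacuous) and taking $2\beta_N$-th roots yields the claimed bound with $C_r := C_N/\beta_N$ plus the $\ln N$ contribution absorbed into the constant.

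The main technical obstacle I anticipate is \emph{not} the combinatorics of the chain but rather making sure the iteration of the three balls inequality is legitimate at every scale: one must keep all radii below the threshold $r_0(g)$ where geodesic normal coordinates and the error estimates (\ref{eq:err1}) are valid, and one must arrange the geometry of consecutive half-balls (including their behavior near the boundary $M \times \{0\}$, where the half-balls are centered) so that the nesting $B_{\rho/2}^+(q_{j+1}) \subset B_{2\rho}^+(q_j) \subset B_{4\rho}^+(q_j) \subset$ (domain of validity) holds uniformly. A minor point to handle with care is that the three balls inequality as stated in Proposition \ref{prop:3ba} has the small ball on the ``large power'' side in the reciprocal form above, so the exponents $\beta_m = \alpha^m$ degrade geometrically in the chain length; since $N$ depends only on $r$ and $(M,g)$, this is harmless and merely makes the final constant $C_r$ depend (badly, but admissibly) on $r$. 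Finally, to reach the full slab $M \times [0,4r]$ rather than just a neighborhood of $M \times \{0\}$, one also invokes an interior three balls inequality for the degenerate elliptic equation (which follows from the same Carleman estimate, or from standard results for $A_2$-weighted operators) to propagate the lower bound into the interior in the $y_{n+1}$-direction; this is routine and does not affect the stated scaling in $\|V\|_{C^1}$.
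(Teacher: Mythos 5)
Your overall strategy — normalize so that $\| y_{n+1}^{(1-2s)/2} w\|_{L^2(M\times[0,4r])}=1$, iterate the three balls inequality along chains of overlapping half-balls, exploit compactness of $M$ to bound the chain length and the number of cover balls by an $r$-dependent constant $N$, and absorb the geometric degradation of the exponents $\beta_m$ into the final $r$-dependent constant — is exactly the mechanism the paper uses. You also correctly flag the two real technical points that the paper treats lightly: keeping all radii below $r_0$ so the Carleman coordinates stay valid, and the fact that the interpolation exponent degrades geometrically in the chain length (which is harmless since $N=N(r,M,g)$). The paper implements the chain slightly differently — it picks a maximizing center $\bar y$ for $\|y_{n+1}^{(1-2s)/2}w\|_{L^2(B_r^+(\cdot))}$ and propagates a lower bound from $\bar y$ to an arbitrary $y_0$, using (implicitly) that the maximum is $\gtrsim 1$ by covering — whereas you propagate from the arbitrary $p$ outward to a cover and then sum. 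Both are valid realizations of the same idea.

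However, as written, the direction and sign of your propagated inequality are wrong and the covering step does not close. You state
\begin{align*}
\left\| y_{n+1}^{\frac{1-2s}{2}} w \right\|_{L^2(B_{r/C'}^+(p_i))} \geq e^{-C_m (\left\| V \right\|_{C^1(M)}^{\frac{1}{2s}}+1)} \left\| y_{n+1}^{\frac{1-2s}{2}} w \right\|_{L^2(B_r^+(p))}^{\beta_m} \left\| y_{n+1}^{\frac{1-2s}{2}} w \right\|_{L^2(M\times[0,4r])}^{1-\beta_m},
\end{align*}
and then try to sum the squares over $i$ and bound $\| y_{n+1}^{(1-2s)/2} w\|^2_{L^2(M\times[0,4r])}$. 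But the cover only gives the inequality $\| y_{n+1}^{(1-2s)/2} w\|^2_{L^2(M\times[0,4r])} \leq \sum_i \| y_{n+1}^{(1-2s)/2} w\|^2_{L^2(B_{r/C'}^+(p_i))}$; to conclude you need \emph{upper} bounds on each summand in terms of $\| y_{n+1}^{(1-2s)/2}w\|_{L^2(B_r^+(p))}$, not lower bounds. The correct intermediate estimate, obtained by applying Proposition \ref{prop:3ba} at each $q_j$ with radii $(\rho, 2\rho, 4\rho)$, trivially bounding the outer ball by the slab norm $=1$, and using $B_\rho^+(q_{j+1})\subset B_{2\rho}^+(q_j)$, is
\begin{align*}
\left\| y_{n+1}^{\frac{1-2s}{2}} w \right\|_{L^2(B_\rho^+(q_{j+1}))} \leq e^{C(\left\| V \right\|_{C^1}^{\frac{1}{2s}}+1)} \left\| y_{n+1}^{\frac{1-2s}{2}} w \right\|_{L^2(B_\rho^+(q_j))}^{1-\alpha},
\end{align*}
which after $m$ steps gives the exponent $\beta_m=(1-\alpha)^m$ (not $\alpha^m$) and the prefactor $e^{+C_m(\cdots)}$ (not $e^{-C_m(\cdots)}$). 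With those corrections the final divide-and-take-roots step yields the claim; as written, the signs produce the impossible conclusion that the ball norm exceeds the slab norm. Relatedly, your leading ``rewritten'' form of the three balls inequality bounds $\|y_{n+1}^{(1-2s)/2}w\|_{L^2(B_{2\rho}^+(q))}$ from below, which cannot be transferred to a smaller ball at a shifted center (set inclusion gives the opposite monotonicity); the useful rearrangement is the smallness-propagating one above.
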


\begin{rmk}
As in the three balls inequality the exact ratios of of the balls and stripes in the estimate do not matter: Up to changing the constant in the estimate, it would have been possible to prove any inequality of the form
\begin{align*}
\left\| y_{n+1}^{\frac{1-2s}{2}} w \right\|_{L^2(B_{r}^+)} 
 \geq  \ e^{-C_{r,c} (\left\| V \right\|_{C^1(M)}^{\frac{1}{2s}}+1)} \left\| y_{n+1}^{\frac{1-2s}{2}} w \right\|_{L^2(M \times [0,cr])},
\end{align*}
with $c>1$.
\end{rmk}

\begin{proof}
Rescaling we may without loss of generality assume that 
\begin{align*}
\left\| y_{n+1}^{\frac{1-2s}{2}} w \right\|_{L^2(M \times [0,4r])} = 1.
\end{align*}
Let $\bar{y}:=(\bar{y}',0)\in M\times \{0\}$ be such that
\begin{align*}
&\left\| y_{n+1}^{\frac{1-2s}{2}} w \right\|_{L^2(B_{r}^+(\bar{y}))} 
= \ \max\limits_{y\in M\times \{0\}}\left\| y_{n+1}^{\frac{1-2s}{2}} w \right\|_{L^2(B_{r}^+(y))} 
=: \tilde{C}_r
\end{align*}
Let $y_0\in M\times \{0\}$ be arbitrary but fixed. Due to the compactness of $M$ for any pair $(r_1,r)$, $0<r_1\leq r\leq r_0(g)$ there exists a finite number of points $y_{1},...,y_{m-1}\in M\times \{0\}$ connecting $y_0$ and $y_m:= \bar{y}$ such that $B_{r_1}(y_{i+1})\subset B_{r}(y_i)$ for $i \in \{0,...,m-1\}$. Hence, the three balls inequality with appropriately adapted, possibly dimension-dependent radii $0< r\leq r_2$ (c.f. Remark \ref{rmk:ratios}) implies
\begin{align*}
\left\| y_{n+1}^{\frac{1-2s}{2}} w \right\|_{L^2(B_{r}^+(y_0))} 
\gtrsim & \ e^{-C (\left\| V\right\|_{C^1(M)}^{\frac{1}{2s}}+1)} \left\| y_{n+1}^{\frac{1-2s}{2}} w \right\|_{L^2(B_{r_2}^+(y_0))} \\
\gtrsim & \ e^{-C (\left\| V\right\|_{C^1(M)}^{\frac{1}{2s}}+1)} \left\| y_{n+1}^{\frac{1-2s}{2}} w \right\|_{L^2(B_{r}^+(y_1))} 
\end{align*}
Iterating this procedure finally yields
\begin{align}
\label{eq:lb}
 \left\| y_{n+1}^{\frac{1-2s}{2}} w \right\|_{L^2(B_{r}^+(\bar{y}))} 
\gtrsim  \ e^{-C (\left\| V\right\|_{C^1(M)}^{\frac{1}{2s}}+1)} \tilde{C}_{r}.
\end{align}
Thus, undoing the rescaling leads to the desired estimate. 
\end{proof}

As a consequence of the previous overlapping balls argument, it is possible to deduce an analogous result for annuli. This will become relevant when proving the doubling property.

\begin{cor}[Overlapping balls argument, II]
\label{cor:obaII}
Let $(M,g)$ be a smooth, compact Riemannian manifold without boundary.  Let $V:M\rightarrow \R$ be differentiable with $\left\| V\right\|_{C^1(M)}<\infty$ and let $r_0>0$ be the radius from the previous lemma. Assume that $s\in(0,1)$ and $w: M\times \R_+ \rightarrow \R$, $w\in H^{1}(y_{n+1}^{1-2s}d\vol_g d y_{n+1}, M\times \R_+)$, solves
\begin{align*}
\p_{n+1} y_{n+1}^{1-2s} \p_{n+1} w + y_{n+1}^{1-2s} \D_g w + y_{n+1}^{1-2s} b \cdot \nabla w + y_{n+1}^{1-2s}c w &= 0 \mbox{ in } M\times \R_+,\\
-c_s \lim\limits_{y_{n+1}\rightarrow 0} y_{n+1}^{1-2s} \p_{n+1} w &= V w \mbox{ on } M \times \{0\},
\end{align*}
where $b$ is a bounded vector field and $c$ is bounded function on $M\times \R_{+}$. 
Then for any radius $0<r<r_0$ there exists a constant $C_r>0$ (which only depends on $r$, $(M,g)$ and the constants $s$, $\left\| b \right\|_{L^{\infty}}$, $\left\| c \right\|_{L^{\infty}}$)  such that
\begin{align*}
\left\| y_{n+1}^{\frac{1-2s}{2}} w \right\|_{L^2((\frac{r}{2},r)^+)} 
 \geq & \ e^{-C_r (\left\| V\right\|_{C^1(M)}^{\frac{1}{2s}}+1)}  \left\| y_{n+1}^{\frac{1-2s}{2}} w \right\|_{L^2(M \times [0,4r])}.
\end{align*}
\end{cor}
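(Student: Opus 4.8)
The plan is to observe that Corollary~\ref{cor:obaII} is essentially an immediate consequence of Corollary~\ref{cor:obaI}, with no new analysis: the half-annulus $(\tfrac r2,r)^+=B_r^+(p)\setminus B_{r/2}^+(p)$ around the (fixed) base point $p$ actually contains a genuine half-ball $B^+_{\rho}(z_0)$ of radius $\rho$ comparable to $r$, centred at a point $z_0\in M$ at geodesic distance $\tfrac{3r}{4}$ from $p$, so the annular $L^2$-mass of $w$ dominates its $L^2$-mass on that half-ball, to which the overlapping-balls estimate applies directly.

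Concretely, since for $0<r<r_0$ geodesic balls of radius $r_0(g)$ are well defined and $M$ has no boundary, I fix $z_0\in M$ with $\dist(z_0,p)=\tfrac{3r}{4}$ and set $\rho:=\tfrac r8$. A one-line computation gives $B^+_{\rho}(z_0)\subset(\tfrac r2,r)^+$: any $(y',y_{n+1})\in B^+_{\rho}(z_0)$ satisfies $\dist(y',z_0)<\tfrac r8$ and $0\le y_{n+1}<\tfrac r8$, hence $\tfrac{5r}{8}<\dist(y',p)<\tfrac{7r}{8}$ and therefore
\[
\frac{r^2}{4}<\dist(y',p)^2\ \le\ \dist(y',p)^2+y_{n+1}^2\ <\ \frac{50}{64}\,r^2\ <\ r^2 .
\]
In particular
\[
\left\| y_{n+1}^{\frac{1-2s}{2}} w \right\|_{L^2((\frac r2,r)^+)}\ \ge\ \left\| y_{n+1}^{\frac{1-2s}{2}} w \right\|_{L^2(B^+_{\rho}(z_0))}.
\]
Next I apply Corollary~\ref{cor:obaI} at the point $z_0$ and at the scale $\rho=\tfrac r8<r_0$, in the flexible form recorded in the remark immediately following it, i.e. with the strip $M\times[0,c\rho]$ for the fixed constant $c:=32$, so that $c\rho=4r$. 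As $w$ meets exactly the hypotheses required there, this yields
\[
\left\| y_{n+1}^{\frac{1-2s}{2}} w \right\|_{L^2(B^+_{\rho}(z_0))}\ \ge\ e^{-C_r\left(\|V\|_{C^1(M)}^{\frac{1}{2s}}+1\right)}\left\| y_{n+1}^{\frac{1-2s}{2}} w \right\|_{L^2(M\times[0,4r])},
\]
with $C_r$ depending only on $r$, $(M,g)$, $s$, $\|b\|_{L^\infty}$ and $\|c\|_{L^\infty}$. Chaining the last three displays proves the claim.

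There is essentially no obstacle beyond this bookkeeping; the only point deserving a word is the flexible version of Corollary~\ref{cor:obaI} invoked above, namely that its covering/chaining proof really does propagate a lower bound from a single half-ball of radius $\tfrac r8$ throughout the entire strip $M\times[0,4r]$. This costs only finitely many applications (a number depending on $r$, $r_0$ and $M$, all of the involved balls having radius $<r_0(g)$) of the three-balls inequality of Proposition~\ref{prop:3ba} on boundary half-balls, together with its interior, purely elliptic counterpart for balls that stay a definite distance from $M\times\{0\}$ — where the weight $y_{n+1}^{1-2s}$ is non-degenerate and no boundary contribution arises; each step contributes a factor $e^{C(\|V\|_{C^1(M)}^{1/2s}+1)}$, and the finite product of these factors is absorbed into $e^{C_r(\|V\|_{C^1(M)}^{1/2s}+1)}$. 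All remaining details are a verbatim repetition of the overlapping-balls argument already carried out for Corollary~\ref{cor:obaI}.
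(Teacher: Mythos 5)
Your proposal is correct and matches the paper's own proof in essence: the paper's one-sentence argument is precisely that the half-annulus $(\tfrac r2,r)^+$ contains a half-ball of comparable radius centred on $M\times\{0\}$, to which Corollary~\ref{cor:obaI} (in its flexible-ratio form) applies. You simply make the inclusion $B_{r/8}^+(z_0)\subset(\tfrac r2,r)^+$ (with $\dist(z_0,p)=\tfrac{3r}{4}$) and the choice of strip constant $c=32$ explicit, which is exactly the bookkeeping the paper leaves to the reader.

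One small remark: in your final paragraph you suggest that propagating the estimate throughout the strip $M\times[0,4r]$ requires an interior (non-boundary) three-balls inequality for the uniformly elliptic operator away from $\{y_{n+1}=0\}$. This is a valid way to organize the covering, but it is not actually needed: in the paper's proof of Corollary~\ref{cor:obaI} the chain stays on $M\times\{0\}$ and one covers the whole strip $M\times[0,cr]$ by half-balls $B_{cr}^+(y)$ centred on the boundary, relating these to $B_{r}^+(y)$ via the flexible-ratio remark after Proposition~\ref{prop:3ba}. Either reading is fine; the statement and constants come out the same.
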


\begin{proof}
The claim follows by noticing that the annulus under consideration contains a ball of (slightly smaller but) comparable size for which we may apply the previous overlapping balls argument (with slightly modified -- though comparable -- radii).
\end{proof}

\subsection{A Quantitative Doubling Estimate in the Bulk}
With the previous auxiliary results it now becomes possible to prove a ``bulk'' doubling estimate:

\begin{prop}[Doubling]
\label{prop:doubl}
Let $(M,g)$ be a smooth, compact Riemannian manifold without boundary. Let $V:M\rightarrow \R$ be differentiable with $\left\| V\right\|_{C^1(M)}<\infty$ and let $r_0>0$ be the radius from the previous lemmas. Assume that $s\in(0,1)$ and that $w: M\times \R_+ \rightarrow \R$, $w\in H^{1}(y_{n+1}^{1-2s}d\vol_g d y_{n+1}, M\times \R_+)$, solves
\begin{align*}
\p_{n+1} y_{n+1}^{1-2s} \p_{n+1} w + y_{n+1}^{1-2s} \D_g w + y_{n+1}^{1-2s} b \cdot \nabla w + y_{n+1}^{1-2s}c w&= 0 \mbox{ in } M\times \R_+,\\
-c_s \lim\limits_{y_{n+1}\rightarrow 0} y_{n+1}^{1-2s} \p_{n+1} w &= V w \mbox{ on } M \times \{0\},
\end{align*}
where $b$ is a bounded vector field and $c$ is bounded function on $M\times \R_{+}$. 
Then there exists a constant $C>0$ (which only depends on $(M,g)$ and the constants $s$, $\left\| b \right\|_{L^{\infty}}$, $\left\| c \right\|_{L^{\infty}}$) such that for any radius, $0<r\leq r_0$, and any point $p\in M$ it holds
\begin{align*}
\left\| y_{n+1}^{\frac{1-2s}{2}} w \right\|_{L^2(B_{2r}^+(p))}
\lesssim e^{C(\left\| V\right\|_{C^1(M)}^{\frac{1}{2s}}+1)}\left\| y_{n+1}^{\frac{1-2s}{2}} w \right\|_{L^2(B_{r}^+(p))}. 
\end{align*}
\end{prop}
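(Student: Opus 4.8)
The plan is to upgrade the three balls inequality (Proposition~\ref{prop:3ba}), in the flexible form of Remark~\ref{rmk:ratios}, to a doubling bound by a dyadic iteration over scales whose base case is supplied by the overlapping-balls estimates (Corollaries~\ref{cor:obaI} and~\ref{cor:obaII}). Throughout I write $N:=\|V\|_{C^1(M)}^{1/2s}+1$ and, for $0<\rho\le r_0$ and $p\in M$, set $D_\rho(p):=\|y_{n+1}^{\frac{1-2s}{2}}w\|_{L^2(B_{2\rho}^+(p))}\big/\|y_{n+1}^{\frac{1-2s}{2}}w\|_{L^2(B_{\rho}^+(p))}\ge 1$; the assertion to be proved is that $D_\rho(p)\lesssim e^{CN}$, uniformly in $\rho\le r_0$ and in $p$.

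The first and central step is to establish an \emph{almost monotonicity} of $D_\rho$ in the scale. For this I would apply Proposition~\ref{prop:3ba} with a triple of radii $(\sigma_1\rho,\sigma_2\rho,\sigma_3\rho)$, with fixed ratios $0<\sigma_1<\sigma_2<\sigma_3$ and $\sigma_3\rho\le r_0$, chosen — using the explicit form of the weight $\phi$ and the freedom in Remark~\ref{rmk:ratios} — so that the interpolation exponent satisfies $\alpha<\tfrac12$ (heuristically, radii comparable to $(\rho,2\rho,8\rho)$ already achieve this, since then $\alpha\approx\log 2/\log 8$). Taking logarithms and using that $\rho\mapsto\log\|y_{n+1}^{\frac{1-2s}{2}}w\|_{L^2(B_\rho^+(p))}$ is nondecreasing, the standard rearrangement of the three balls inequality (composing a bounded number of them in order to pass between consecutive dyadic scales) produces $\log D_{\rho/2}(p)\le\frac{\alpha}{1-\alpha}\log D_{\rho}(p)+\frac{C}{1-\alpha}N$. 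This is the only step in which the Carleman estimate of Section~2 enters, and the condition $\alpha<\tfrac12$ is precisely what makes the ensuing iteration converge.

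Next comes the base case and the passage to all scales. At the fixed scale $\rho_0:=r_0/2$, Corollary~\ref{cor:obaI} gives $\|y_{n+1}^{\frac{1-2s}{2}}w\|_{L^2(B_{\rho_0}^+(p))}\ge e^{-C_{\rho_0}N}\|y_{n+1}^{\frac{1-2s}{2}}w\|_{L^2(M\times[0,4\rho_0])}$ with $C_{\rho_0}$ a fixed constant, while trivially $B_{2\rho_0}^+(p)\subset M\times[0,4\rho_0]$, so $D_{\rho_0}(p)\le e^{C_{\rho_0}N}$ uniformly in $p$; the same argument handles directly all ``large'' $r$ lying in a fixed subinterval of $(0,r_0]$ bounded away from $0$, where the overlapping-balls constant stays bounded. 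Then, with $\beta:=\alpha/(1-\alpha)\in(0,1)$, iterating the almost-monotonicity inequality downward from $\rho_0$ yields $\log D_{2^{-k}\rho_0}(p)\le\beta^k\log D_{\rho_0}(p)+\frac{CN}{(1-\alpha)(1-\beta)}\le C'N$ for every $k\ge0$ and every $p$, the geometric series being summable because $\beta<1$ and the first term being controlled by the base case. Finally, for arbitrary $0<r\le r_0$ one picks the dyadic scale with $2^{-k}\rho_0\le r<2^{-k+1}\rho_0$ and uses $B_{2^{-k}\rho_0}^+(p)\subset B_r^+(p)\subset B_{2r}^+(p)\subset B_{2^{-k+2}\rho_0}^+(p)$ together with the dyadic doubling bound applied a bounded number of times. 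Corollary~\ref{cor:obaII} plays the identical role if one prefers to run the whole iteration with the annular quantities $\|y_{n+1}^{\frac{1-2s}{2}}w\|_{L^2((\rho/2,\rho)^+)}$ in place of $D_\rho$.

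The step I expect to be the main obstacle is keeping all constants uniform in $w$, in $p$, and above all in $r$ as $r\to0$. One cannot simply invoke Corollary~\ref{cor:obaI} at scale $r$, because crossing the compact manifold by a chain of $r$-balls costs $\sim r^{-1}$ applications of the three balls inequality, so the overlapping-balls constant there degenerates like $r^{-1}$. The whole point of the iteration above is to pay this cost only once, at the fixed scale $\rho_0$, and then propagate the bound to all smaller scales with a geometrically decaying error — which in turn is possible only because the exponent $\alpha$ in the three balls inequality can be forced below $\tfrac12$ by the freedom in the choice of radii (Remark~\ref{rmk:ratios}).
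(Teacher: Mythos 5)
Your overall plan — treat the doubling ratio $D_\rho(p)$, establish a recursion in the dyadic scale from the three balls inequality, and anchor the recursion at a fixed scale $\rho_0\sim r_0$ via Corollary~\ref{cor:obaI} — is a natural idea, and your discussion of the base case and of why the overlapping-balls constant degenerates at small scales is correct. However, there is a genuine gap in the central ``almost monotonicity'' step, and it is not a repairable technicality.

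The recursion you state, $\log D_{\rho/2}(p)\le\frac{\alpha}{1-\alpha}\log D_{\rho}(p)+\frac{C}{1-\alpha}N$, does \emph{not} follow from the three balls inequality at the triple $(\rho,2\rho,8\rho)$. Writing $M(\rho):=\log\|y_{n+1}^{\frac{1-2s}{2}}w\|_{L^2(B_\rho^+(p))}$, the three balls estimate $M(2\rho)\le\alpha M(8\rho)+(1-\alpha)M(\rho)+CN$ rearranges to
$\log D_\rho\le\frac{\alpha}{1-\alpha}\bigl(M(8\rho)-M(2\rho)\bigr)+C'N=\frac{\alpha}{1-\alpha}\bigl(\log D_{2\rho}+\log D_{4\rho}\bigr)+C'N$,
and the $\log D_{4\rho}$ term cannot be dropped. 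With $\alpha\approx 1/3$ this is the two-step relation $\log D_\rho\le\tfrac12(\log D_{2\rho}+\log D_{4\rho})+C'N$, whose characteristic polynomial $\lambda^2-\tfrac12\lambda-\tfrac12$ has $\lambda=1$ as a root. Iterating such a recursion from a bounded base case produces $\log D_{2^{-j}\rho_0}\lesssim jN$, i.e.\ a doubling constant that degrades \emph{polynomially} in $\rho_0/r$ as $r\to 0$; this is strictly weaker than the uniform-in-$r$ bound asserted in the proposition. The phenomenon is not an artefact of the particular triple: since the Carleman weight $\phi$ is essentially $-c\ln r$ (with a strictly convex correction), the exponent for a triple at ratios $(1,\sigma,\sigma^2)$ satisfies $\alpha\ge\tfrac12$ (it equals $\tfrac12$ for the pure logarithm and is strictly larger because $\phi$ is strictly convex in $\ln r$), so the one-step recursion with $\beta=\alpha/(1-\alpha)<1$ that your argument needs is never available. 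For a spread-out triple $(\rho,2\rho,2^m\rho)$ one gets $\alpha\approx 1/m<1/2$, but the resulting relation $\log D_\rho\le\frac{1}{m-1}\sum_{k=1}^{m-1}\log D_{2^k\rho}+C'N$ always has $\lambda=1$ in its characteristic polynomial (the coefficients sum to one), so the iteration again gives only a linearly growing bound. In short, iterating the three balls inequality alone cannot remove the logarithmic-in-scale loss.

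The paper's proof avoids this by not iterating at all. It applies the Carleman estimate \emph{once}, with a single cut-off supported on the wide annulus $(\delta/2,R)^+$ where $R\sim 1$ is fixed; the left-hand side of the Carleman inequality then carries two positive contributions, one at the inner scale $\delta$ (using the sharpened estimate from Remark~\ref{rmk:anti}, which has no logarithmic loss) and one at the \emph{fixed} outer scale $R$. The outer cut-off error (on $(R/2,2R)^+$) is absorbed into the positive term at $(R/8,R/4)^+$ by a choice of $\tau$, and since $R$ is fixed the resulting constraint on $\tau$ and the ensuing factor $\bigl(\|w\|_{B_{2R}^+}/\|w\|_{(R/8,R/4)^+}\bigr)^A$ are independent of $\delta$; this ratio is then bounded using Corollaries~\ref{cor:obaI} and \ref{cor:obaII} at the fixed scale $R$. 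It is this one-shot ``anchoring'' at a fixed outer scale that prevents the cumulative $O(N)$-per-step loss your iteration incurs. If you want to salvage the iterative picture, you would need an input stronger than the three balls inequality — precisely the Carleman estimate with a wide annular cut-off and two positive bulk terms, which is what the paper uses.
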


\begin{rmk}
We stress that the radius $r_0>0$ in which our boundary doubling inequality is valid is \emph{independent} of the eigenvalue $\lambda$ and the potential $V$. This remains true in the application to the Steklov problem since we do not transfer the parameter $\lambda$ from the boundary onto the interior coefficients of the equation but keep as a quantity on the boundary instead. As illustrated in the previous sections it is dealt with by an application of the interpolation inequality boundary (\ref{eq:int7}).
\end{rmk}

\begin{proof}[Proof of the Proposition \ref{prop:doubl}]
In order to bound the gradient contributions which arise in the application of the Carleman inequality (\ref{eq:vCarl}), we again rely on a combination of the Carleman estimate from Proposition \ref{prop:Carl} and the elliptic regularity estimate (\ref{eq:ellreg}): Let $\eta$ be a radial cut-off function, which is equal to one on the annulus $|y|\in (\delta, R/2)^+$ and vanishes outside of the annulus $|y|\in (\delta/2, R)^+$. 
Inserting $\eta w$ into the Carleman estimate (in combination with Remark \ref{rmk:anti}), using the elliptic estimate (\ref{eq:ellreg1}) (which in particular necessitates $\tau \geq \left\| V\right\|_{C^1}^{\frac{1}{2s}}$), we obtain 
\begin{align*}
&\delta^{-2} \tau \left\| e^{\tau \phi} y^{\frac{1-2s}{2}}_{n+1} w \right\|_{L^2((\delta, 3\delta)^+)}^2 
+ \tau^2 R^{-2} \left\| e^{\tau \phi} (1+\ln(|y|)^{2})^{-\frac{1}{2}}y^{\frac{1-2s}{2}}_{n+1} w \right\|_{L^2((\frac{R}{8}, \frac{R}{4})^+)}^2 \\
\lesssim & \ \left\| V\right\|_{C^1(M)} \tau^{2-2s} \left(\delta^{-2}e^{2 \tau \phi(\delta/2)} \left\|  y^{\frac{1-2s}{2}}_{n+1} w \right\|_{L^2((\frac{\delta}{2}, \frac{3\delta}{2})^+)}^2 \right. \\
&\left. + \ R^{-2}e^{2\tau \phi(R/2)} \left\| y^{\frac{1-2s}{2}}_{n+1} w \right\|_{L^2((\frac{R}{2},2R)^+)}^2 \right).
\end{align*} 
Setting $R\sim 1$ (in dependence of $M$ and $r_0(g)$), we further deduce: 
\begin{align*}
& e^{\tau \phi(3 \delta )}\left\|  y^{\frac{1-2s}{2}}_{n+1} w \right\|_{L^2(B_{3\delta}^+)}^2
+ e^{\tau\phi(R/4)} \delta^2 \tau^2 \left\|   y^{\frac{1-2s}{2}}_{n+1}w \right\|_{L^2((\frac{R}{8},\frac{R}{4})^+)}^2\\
\lesssim & \  \left\| V\right\|_{C^1(M)} \tau^{2-2s} \left( \delta^2 e^{\tau \phi(R/2)} \left\|   y^{\frac{1-2s}{2}}_{n+1} w\right\|_{L^2(B_{2R}^+)}^2 
+ e^{\tau \phi(\frac{\delta}{2})} \left\|   y^{\frac{1-2s}{2}}_{n+1} w\right\|_{L^2(B_{\frac{3\delta}{ 2}}^+)}^2 \right).
\end{align*}
We choose $\tau>0$ such that the contribution $\left\| V\right\|_{C^1(M)} \tau^{2-2s} e^{\tau \phi(R/2)} \left\|   y^{\frac{1-2s}{2}}_{n+1} w\right\|_{L^2(B_{2R}^+)}^2$ on the right hand side can be absorbed in the term $e^{\tau\phi(R/4)}\tau^2 \left\|   y^{\frac{1-2s}{2}}_{n+1}w \right\|_{L^2((\frac{R}{8},\frac{R}{4})^+)}^2 $ on the left hand side. A possible choice of such a $\tau$, for example, is
\begin{align*}
\tau \sim \frac{1}{\phi(R/2)- \phi(R/4)} \ln \left( \frac{1}{\left\| V\right\|_{C^1(M)}^{2s}} \frac{ \left\| y^{\frac{1-2s}{2}}_{n+1} w \right\|_{L^2((\frac{R}{8},\frac{R}{4})^+)} }{  \left\|y^{\frac{1-2s}{2}}_{n+1} w  \right\|_{L^2(B_{2R}^+)} } \right) + C\left\| V\right\|_{C^1(M)}^{\frac{1}{2s}}.
\end{align*}
This then yields
\begin{align*}
\left\| y_{n+1}^{\frac{1-2s}{2}} w \right\|_{L^2(B_{3\delta}^+)} 
\lesssim 
e^{C(1+\left\| V\right\|_{C^1(M)}^{\frac{1}{2s}})} \left(  \frac{  \left\|y^{\frac{1-2s}{2}}_{n+1} w  \right\|_{L^2(B_{2R}^+)}}{ \left\| y^{\frac{1-2s}{2}}_{n+1} w \right\|_{L^2((\frac{R}{8},\frac{R}{4})^+)}} \right)^A
 \left\| y_{n+1}^{\frac{1-2s}{2}} w \right\|_{L^2(B_{3\delta/2}^+)},
\end{align*}
for an absolute constant $A$. Via the three balls inequality and Corollaries \ref{cor:obaI}, \ref{cor:obaII} 
it is possible to further control the quotient by an absolute constant (which depends on $R$ and through $r_0$ also on $M$): 
\begin{align*}
 \frac{  \left\|y^{\frac{1-2s}{2}}_{n+1} w  \right\|_{L^2(B_{2R}^+)}  } { \left\| y^{\frac{1-2s}{2}}_{n+1} w \right\|_{L^2((\frac{R}{8},\frac{R}{4})^+)}  } \leq C e^{C_{R} (\left\| V\right\|_{C^1(M)}^{\frac{1}{2s}}+1)}.
\end{align*} 
Setting $\delta:= \frac{2 r}{3}$, then yields the doubling inequality for small balls with radius $0<r\leq r_0$. 
For large radii $r_0\leq r\leq  C(M)$ we argue via Corollary \ref{cor:obaI}: 
\begin{align*}
\left\| y_{n+1}^{\frac{1-2s}{2}} w \right\|_{L^2(B_r^+)} 
& \geq \left\| y_{n+1}^{\frac{1-2s}{2}} w \right\|_{L^2(B_{r_0}^+)} \\
&\geq e^{-C_{r_0}(\left\| V\right\|_{C^1(M)}^{2s}+1)} \left\| y_{n+1}^{\frac{1-2s}{2}} w \right\|_{L^2(M\times [0,4r])}\\
& \geq e^{-C_{r_0}(\left\| V\right\|_{C^1(M)}^{2s}+1)} \left\| y_{n+1}^{\frac{1-2s}{2}} w \right\|_{L^2(B_{2r}^+)}. 
\end{align*}
\end{proof}

\section{Lower Bounds on the Order of Vanishing}
\label{sec:van}

\subsection{A First Doubling Estimate on the Boundary}
In this section we give a first argument for the order of vanishing, Propositions \ref{prop:eigen} and \ref{prop:C1}. To this end, we transfer the ``bulk doubling property'' into a growth estimate at the boundary:

\begin{cor}
\label{cor:vani}
Let $(M,g)$ be a compact, smooth Riemannian manifold without boundary. Assume that $s\in(0,1)$ and that $w: M \rightarrow \R$, $w\in H^{s}_g(M)$, is an eigenfunction of the generalized Dirichlet-to-Neumann map (\ref{eq:CSE}), (\ref{eq:GDN}).
Let $R\sim r_0(g)$ be fixed (here $r_0(g)$ is the radius from the Carleman estimate (\ref{eq:vCarl})). Then there exist constants $C_1=C_1(\lambda, r_0)$, $C_2$ such that for all sufficiently small radii $r$ with $0<r\ll R$ it holds
\begin{align*}
\left\| w \right\|_{L^2(B_{R})} \leq  C_1(\lambda, r_0) \left( \frac{R}{r}\right)^{C_2 (\lambda^{\frac{1}{2s}}+ 1)} \left\| w \right\|_{L^2(B_r)}.  
\end{align*}
The constant $C_2$ only depends on the manifold $(M,g)$ and the constants $s$, $\left\| b \right\|_{L^{\infty}}$, $\left\| c \right\|_{L^{\infty}}$.
\end{cor}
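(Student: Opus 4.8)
The plan is to combine the iterated bulk doubling inequality of Proposition~\ref{prop:doubl} (applied with the constant potential $V=\lambda$) with two comparisons between the weighted bulk norms $\|y_{n+1}^{\frac{1-2s}{2}}\bar w\|_{L^2(B_\rho^+(p))}$ of the generalized harmonic extension $\bar w$ solving (\ref{eq:CSE}) and the boundary norms $\|w\|_{L^2(B_\rho(p))}$: one at the fixed scale $\rho\sim R$ and one at the small scale $\rho\sim r$. Throughout I fix $R=r_0(g)/8$, so that all radii between $r$ and $8R$ lie in the regime where Propositions~\ref{prop:3ba} and~\ref{prop:doubl} apply. First I would iterate the doubling estimate: applying Proposition~\ref{prop:doubl} around $p$ roughly $N\sim\log_2(2R/r)$ times and using $e^{CN(\lambda^{1/2s}+1)}=(2R/r)^{(C/\ln 2)(\lambda^{1/2s}+1)}$ gives, for $0<r\le R$,
\begin{align*}
\big\|y_{n+1}^{\frac{1-2s}{2}}\bar w\big\|_{L^2(B_{2R}^+(p))}\le \Big(\tfrac{R}{r}\Big)^{C_2(\lambda^{1/2s}+1)}\big\|y_{n+1}^{\frac{1-2s}{2}}\bar w\big\|_{L^2(B_{r/2}^+(p))},
\end{align*}
where $C_2=C_2(M,g,s,\|b\|_{L^\infty},\|c\|_{L^\infty})$ is the dimensional constant destined for the final exponent.

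Next, the fixed-scale comparison ``$\|w\|_{L^2(B_R)}\lesssim$ bulk''. Here I would apply the weighted trace/interpolation inequality (\ref{eq:interpol}) with the free parameter set equal to $1$ to bound $\|w\|_{L^2(B_R)}^2$ by $\|y_{n+1}^{\frac{1-2s}{2}}\bar w\|_{L^2(B_{2R}^+)}^2+\|y_{n+1}^{\frac{1-2s}{2}}\nabla\bar w\|_{L^2(B_{2R}^+)}^2$, and then the Caccioppoli estimate (\ref{eq:ellreg1}) with $\tau\sim\lambda^{1/2s}$ (the boundary term produced by Caccioppoli being absorbed via (\ref{eq:interpol}) exactly as in the proof of Proposition~\ref{prop:3ba}) to replace the gradient term by $\lambda^{1/s}R^{-2}\|y_{n+1}^{\frac{1-2s}{2}}\bar w\|_{L^2(B_{4R}^+)}^2$. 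Since $R$ is fixed, this yields $\|w\|_{L^2(B_R)}\le C(\lambda,r_0)\,\|y_{n+1}^{\frac{1-2s}{2}}\bar w\|_{L^2(B_{2R}^+)}$, the polynomial-in-$\lambda$ losses (and, if one prefers, a further finite number of doubling steps to pass from $B_{4R}^+$ back to $B_{2R}^+$) being harmless since they are collected into $C_1(\lambda,r_0)$.

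Finally --- and this is the step I expect to be the main obstacle --- the small-scale comparison ``bulk over $B_{r/2}^+$ $\lesssim \|w\|_{L^2(B_r)}$''. The mechanism is that for heights $y_{n+1}\ll\lambda^{-1/2s}$ the extension is only a small perturbation of its trace $w$: integrating the extension equation in $y_{n+1}$ and using $-c_s\lim_{y_{n+1}\to0} y_{n+1}^{1-2s}\partial_{n+1}\bar w=\lambda w$ yields
\begin{align*}
\bar w(x,y_{n+1})-w(x)=-\tfrac{\lambda}{2sc_s}y_{n+1}^{2s}w(x)-\int_0^{y_{n+1}}t^{2s-1}\!\int_0^t\tau^{1-2s}\big(\D_g\bar w+b\cdot\nabla\bar w+c\bar w\big)(x,\tau)\,d\tau\,dt ,
\end{align*}
so that, controlling the double integral by means of the interior and boundary regularity estimates for the extension recalled in the appendix, one should obtain, for all sufficiently small $r$ (that is $\lambda r^{2s}\le1$), the \emph{local} bound
\begin{align*}
\big\|y_{n+1}^{\frac{1-2s}{2}}\bar w\big\|_{L^2(B_{r/2}^+(p))}\lesssim r^{1-s}\,\|w\|_{L^2(B_r(p))} .
\end{align*}
The difficulty is exactly that the crude estimates for the double integral reproduce $\|y_{n+1}^{\frac{1-2s}{2}}\bar w\|_{L^2(B_r^+)}$ on the right-hand side with an $O(1)$ prefactor (a manifestation of the nonlocality of the Dirichlet--Neumann map), so that some additional structure is needed to absorb it: either one exploits the eigenfunction structure --- writing $\bar w=\sum_k\alpha_k(y_{n+1})\varphi_k$ in an $L^2$-basis of Laplace eigenfunctions and using the eigenvalue relation to localize $\|\D_g\bar w(\cdot,\tau)\|_{L^2(B_{r/2})}$ --- or one runs a bootstrap for $A(\rho):=\|y_{n+1}^{\frac{1-2s}{2}}\bar w\|_{L^2(B_\rho^+)}$ along a geometric sequence of radii, combining it with the bulk doubling already proved to close the estimate.

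Putting the three steps together gives $\|w\|_{L^2(B_R)}\le C(\lambda,r_0)(R/r)^{C_2(\lambda^{1/2s}+1)}r^{1-s}\|w\|_{L^2(B_r)}$, and since $r^{1-s}\le1$ --- and may, if desired, be absorbed into a marginally larger exponent constant $C_2$ --- this is the claimed inequality with $C_1(\lambda,r_0)=C(\lambda,r_0)$ and $C_2$ depending only on $(M,g)$, $s$, $\|b\|_{L^\infty}$, $\|c\|_{L^\infty}$.
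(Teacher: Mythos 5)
Your Steps~1 and~2 (iterating the bulk doubling inequality of Proposition~\ref{prop:doubl}, and the fixed-scale comparison $\|w\|_{L^2(B_R)}\lesssim_\lambda \|y_{n+1}^{(1-2s)/2}\bar w\|_{L^2(B_{2R}^+)}$ via the interpolation inequality (\ref{eq:interpol}) together with Caccioppoli (\ref{eq:ellreg1})) reproduce exactly the first half of the paper's argument. The gap is your Step~3, and you correctly identified it as the obstacle: you try to prove the \emph{local} bound $\|y_{n+1}^{(1-2s)/2}\bar w\|_{L^2(B_{r/2}^+(p))}\lesssim r^{1-s}\|w\|_{L^2(B_r(p))}$ with a constant depending only on $(M,g,s,\|b\|,\|c\|)$ by integrating the extension equation in $y_{n+1}$, observe that the error term reproduces the bulk norm with an $O(1)$ prefactor, and then offer two closure strategies (spectral decomposition, or bootstrapping via doubling) that you do not carry out. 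Neither is developed far enough to decide whether it works, and there is real reason to doubt it does at this strength: $\bar w\restriction_{B_{r/2}^+(p)}$ is a \emph{nonlocal} functional of $w$ on all of $M$, so a pointwise-in-$p$, uniform-in-$w$ estimate of the asserted form is not to be expected. Even the eigenfunction relation does not obviously localize $\D_g\bar w$ in $B_{r/2}$, and a bootstrap of $A(\rho)=\|y_{n+1}^{(1-2s)/2}\bar w\|_{L^2(B_\rho^+)}$ requires a seed estimate of the very type you are trying to prove.

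The paper closes this step by a genuinely \emph{softer} argument. In the proof of Proposition~\ref{prop:eigen} it establishes only the qualitative statement (\ref{eq:ext}): for each \emph{fixed} $w$ there exist a constant $C>0$ and a threshold radius $\bar r>0$, both possibly depending on $w$, such that $\|y_{n+1}^{(1-2s)/2}\bar w\|_{L^2(B_r^+)}\le C\|w\|_{L^2(B_r)}$ for all $0<r\le\bar r$. This is proved by contradiction: a diagonal sequence $r_k\to0$ violating the bound is blown up at scale $r_k$; the doubling and Caccioppoli estimates give uniform bulk and gradient bounds, the contradiction hypothesis forces the boundary trace to vanish in the limit, and Rellich compactness produces a nontrivial limiting solution with vanishing Cauchy data on $\{y_{n+1}=0\}$, contradicting weak unique continuation. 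The $w$-dependence of $C$ and $\bar r$ is then removed only up to a $\lambda$-dependent constant by invoking the finite multiplicity of the eigenvalue, which is precisely why the Corollary carries the uncontrolled prefactor $C_1(\lambda,r_0)$ rather than a clean universal constant. So the shape of the result you should be aiming for in Step~3 is weaker than what you posited, and the mechanism is compactness against weak UCP, not a quantitative local extension estimate.
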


We aim at deducing this estimate from the bulk doubling estimate by proving that the bulk mass can be controlled from above and below by the boundary mass. We first estimate the $L^2$-boundary norm by the weighted bulk norm. This is a consequence of the interpolation inequality (\ref{eq:interpol}) and the elliptic regularity estimate (\ref{eq:ellreg1}). For the opposite bound we provide an indirect argument which -- via blow-up -- relates the desired boundary inequality to the weak unique continuation principle. We carry out the argument for an eigenfunction; for a solution with an arbitrary $C^1$ potential the estimate is of the same spirit. In particular, the following argument then yields the doubling property on the boundary for sufficiently small radii.\\

\subsection{Proof of Proposition \ref{prop:eigen}}

In order to identify the order of vanishing, we, in particular, have to show that the bulk contributions can be controlled via the boundary contributions for sufficiently small radii. A weak statement of this result would be the existence of a constant $C>0$ (possibly depending on $\lambda$) such that
\begin{align*}
\limsup\limits_{r\rightarrow 0} \frac{\left\| y_{n+1}^{\frac{1-2s}{2}}w \right\|_{L^2(B_{r}^+)}}{\left\| w\right\|_{L^2(B_r)}} \leq C <\infty.
\end{align*}
This is a weak form of a (localized) bound for the (generalized) harmonic/ Caffarelli-Silvestre extension. 
We provide an indirect argument which  illustrates the connection of the desired bound and the weak unique continuation principle. Due to its indirect nature, this first argument for the boundary doubling property is relatively weak and not very quantitative. Yet, as it is a relatively straight forward application of the previous results we included it in our discussion. Relying on ideas of Bellova and Lin \cite{BL14}, we present a much more precise estimate in the next sections.

\begin{proof}[Proof of Proposition \ref{prop:eigen}]
Setting $R\sim r_0(g)$, we take the bulk estimate
\begin{align}
\label{eq:bulkd}
\left\| y_{n+1}^{\frac{1-2s}{2}} w \right\|_{L^2(B_{R}^+)} \leq \left( \frac{R}{r} \right)^{C(\lambda^{\frac{1}{2s}}+1)} \left\| y_{n+1}^{\frac{1-2s}{2}} w \right\|_{L^2(B_{r}^+)}
\end{align}
as our starting point and begin by estimating its left hand side by an analogous boundary contribution from below. By the interpolation estimate from \cite{Rue}, the elliptic regularity estimate (\ref{eq:ellreg1}), the doubling estimate and choosing $\tau \sim \lambda^{\frac{1}{2s}}$, we have
\begin{align*}
\left\| w \right\|_{L^2(B_{R})} & \lesssim \tau^{1-s} \left\| y_{n+1}^{\frac{1-2s}{2}} w \right\|_{L^2(B_{2R}^+)} +
 \tau^{-s} \left\| y_{n+1}^{\frac{1-2s}{2}} \nabla w \right\|_{L^2(B_{2R}^+)} \\
& \lesssim R^{-1} \tau^{1-s} \lambda \left\| y_{n+1}^{\frac{1-2s}{2}} w \right\|_{L^2(B_{4R}^+)}\\
& \lesssim e^{C(\lambda^{\frac{1}{2s}}+1)} \left\| y_{n+1}^{\frac{1-2s}{2}}w \right\|_{L^2(B_{R}^+)}.
\end{align*}
This yields the desired lower bound for the left hand side of (\ref{eq:bulkd}).
Therefore it remains to deal with the right hand side of (\ref{eq:bulkd}). To this end, we prove the following statement: For any fixed $w$ there exists a constant $C>0$ (in particular possibly $\lambda$ dependent) and a radius $\bar{r}$ with $0<\bar{r}\leq R$ such that
\begin{align}
\label{eq:ext}
\left\| y_{n+1}^{\frac{1-2s}{2}}w\right\|_{L^2(B_{r}^+)} \leq C \left\| w \right\|_{L^2(B_{r})} \mbox{ for all } 0<r\leq \bar{r}. 
\end{align} 
Combined with the estimate from above, this then implies the desired result on the vanishing order:
\begin{align*}
\left\| w \right\|_{L^2(B_{R})} \leq C(\lambda, R) \left( \frac{R}{r} \right)^{C(\lambda^{\frac{1}{2s}} + 1)} \left\| w \right\|_{L^2(B_{r})}.
\end{align*}
In order to establish (\ref{eq:ext}), we argue by contradiction: Assuming the statement were wrong, a diagonal argument would yield the existence of a sequence of radii $\{r_k\}_{k\in \N}$, $r_k>0$, $r_k \rightarrow 0$ such that
\begin{align}
\label{eq:assum}
k \left\| w \right\|_{L^2(B_{r_k})} <  \left\| y_{n+1}^{\frac{1-2s}{2}} w \right\|_{L^2(B_{r_k}^+)}.
\end{align}
By blowing up the respective functions $w$, we illustrate that this then contradicts the weak unique continuation principle. Indeed, consider
\begin{align*}
w_{r_k}(y):= \frac{w(r_k y)}{ r_k^{-\frac{n+1}{2}} r_k^{\frac{2s-1}{2}} \left\| y_{n+1}^{\frac{1-2s}{2}} w \right\|_{L^2(B_{r_k}^+)} }.
\end{align*}
For this function we have the following estimates:
\begin{itemize}
\item $\left\| y_{n+1}^{\frac{1-2s}{2}} w_{r_k}  \right\|_{L^2(B_{1}^+)}=1$,
\item $\left\| y_{n+1}^{\frac{1-2s}{2}} \nabla w_{r_k}  \right\|_{L^2(B_{1}^+)}\leq C(\lambda)$,
\item $\left\| w_{r_k} \right\|_{L^2(B_1)} \leq  r_k^{1-s}k^{-1}$.
\end{itemize}
Here the second estimate follows as a consequence of the doubling property and a Caccioppoli/elliptic regularity estimate in the form of (\ref{eq:ellreg1}):
\begin{align*}
\left\| y_{n+1}^{\frac{1-2s}{2}} \nabla w \right\|_{L^2(B_{r_k}^+)} & \lesssim r_k^{-1} \lambda^{3-2s} \left\| y_{n+1}^{\frac{1-2s}{2}} w\right\|_{L^2(B_{2r_k}^+)}\\
& \lesssim r_k^{-1}\lambda^{3-2s} e^{C(\lambda^{\frac{1}{2s}}+1)} \left\| y_{n+1}^{\frac{1-2s}{2}} w\right\|_{L^2(B_{r_k}^+)},
\end{align*}
where we made use of the doubling inequality.
The third property follows from a rescaling argument and our assumption (\ref{eq:assum}):
\begin{align*}
\left\| w_{r_k}\right\|_{L^2(B_1)} &= \frac{\left\| w(r_k \cdot) \right\|_{L^2(B_1)}}{r^{- \frac{n+1}{2}}_k r^{\frac{2s-1}{2}}_k\left\| y_{n+1}^{\frac{1-2s}{2}} w \right\|_{L^2(B_{r_k}^+)} }\\
& \lesssim \frac{r^{- \frac{n}{2}}_k \left\| w \right\|_{L^2(B_{r_k})} }{r^{- \frac{n+1}{2}}_k r^{\frac{2s-1}{2}}_k k \left\| w \right\|_{L^2(B_{r_k})}} = r_k^{1-s}k^{-1}.
\end{align*}
Thus, via Rellich's compactness theorem we may conclude that $w_{r_k} \rightarrow w_0$ strongly in $L^2(y_{n+1}^{\frac{1-2s}{2}}d\vol_g d y_{n+1}, B_1^+)$, weakly in $H^{1}(y_{n+1}^{\frac{1-2s}{2}}d \vol_g d y_{n+1}, B_{1}^+)$ and $ w_0 = 0$ on $B_{1}^+ \cap \{y_{n+1}=0\}$. Moreover, it is a weak solution of
\begin{align*}
(\p_{n+1}y_{n+1}^{1-2s} \p_{n+1} + y_{n+1}^{1-2s}\D_{g})w_0 &= 0 \mbox{ on } B_{1}^+,\\
w_0 &= 0 \mbox{ on } B_{1}^+ \cap \{y_{n+1}=0\},\\
\lim\limits_{y_{n+1}\rightarrow 0}y_{n+1}^{1-2s} \p_{n+1}w_0 &= 0 \mbox{ on } B_{1}^+ \cap \{y_{n+1}=0\}.
\end{align*}
Thus, the weak unique continuation principle asserts $w_0=0$ which contradicts $\left\| y_{n+1}^{\frac{1-2s}{2}} w_0 \right\|_{L^2(B_{1}^+)}=1$. Hence, (\ref{eq:assum}) must be wrong, which proves the claimed estimate (\ref{eq:ext}).
Finally, the claim of Proposition \ref{prop:eigen} follows from the finite multiplicity of the eigenvalues (in the case of $C^1$ potentials the finite multiplicity is replaced by the Fredholm alternative). 
\end{proof}

Last but not least we present the argument for Corollary \ref{cor:vani}.

\begin{proof}
The corollary follows from the estimate in the previous proof in combination with the fact that there each eigenvalue only occurs with a finite multiplicity. 
\end{proof}

\section{An Improved Boundary Estimate}

In this section we follow the ideas of Bellova and Lin \cite{BL14} to transfer the bulk estimates into a doubling condition on the boundary. Although these ideas are not new, in combination with our bulk inequalities they permit to deduce a strong boundary estimate. As we are not forced to work on small $\lambda$-dependent geodesic balls, this can then be exploited to establish (optimal) upper bounds on nodal domains for the generalized Dirichlet-to-Neumann map on compact, real analytic Riemannian manifolds without boundary. We point out that, in particular, this procedure is much more precise than the blow-up argument of the previous section and also allows to give another (improved) proof of the estimates on the vanishing order.\\
The main result of this section reads:

\begin{prop}
\label{prop:bdoubl}
Let $(M,g)$ be a smooth $n$-dimensional Riemannian manifold without boundary, let $s\in (0,1)$ and suppose that $V\in C^{1}(M)$. Assume that $w: M \times \R_+ \rightarrow \R$ is a solution of $\Lambda w = V w$, which is to be understood in the sense of (\ref{eq:CSE}), (\ref{eq:GDN}).
Then there exists a radius $0<r_0<\infty$ which only depends on the manifold $(M,g)$ and the constants $s$, $\left\| b \right\|_{L^{\infty}}$, $\left\| c \right\|_{L^{\infty}}$ such that for all radii $0<r \leq r_0$ and all points $p\in M$ the following boundary doubling inequality is true:
\begin{align*}
\int\limits_{B_{2 r}(p)} w^2 dx \leq e^{C(\left\| V \right\|_{C^1}^{\frac{1}{2s}} + 1)} \int\limits_{B_{r}(p)} w^2 dx.
\end{align*}
The constant $C$ only depends on the manifold $(M,g)$ and the constants $s$, $\left\| b \right\|_{L^{\infty}}$, $\left\| c \right\|_{L^{\infty}}$.
\end{prop}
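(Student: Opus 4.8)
\textbf{Proof plan for Proposition \ref{prop:bdoubl}.}

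The strategy is to follow Bellova and Lin \cite{BL14}: we already have a quantitative \emph{bulk} doubling estimate (Proposition \ref{prop:doubl}) for the Caffarelli-Silvestre extension $\bar w$ on half-balls $B_r^+(p)$, and we want to transfer it to a doubling estimate for the trace $w$ on $M$. The central issue is that the weighted bulk $L^2$-norm $\|y_{n+1}^{\frac{1-2s}{2}}\bar w\|_{L^2(B_r^+(p))}$ must be comparable — up to the allowed exponential factor $e^{C(\|V\|_{C^1}^{1/2s}+1)}$ — to the boundary $L^2$-norm $\|w\|_{L^2(B_r(p))}$, so that the two doubling statements become equivalent. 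One inequality is essentially the trace/interpolation estimate (\ref{eq:interpol}) combined with the Caccioppoli estimate (\ref{eq:ellreg1}): choosing the free parameter $\tau\sim\|V\|_{C^1}^{1/2s}$ there bounds $\|w\|_{L^2(B_r)}$ by a constant times (a suitable dilate of) the weighted bulk norm, which is the easy direction. The opposite bound — controlling the bulk mass by the boundary mass at \emph{all} small scales — is the delicate point; the blow-up argument of Section \ref{sec:van} gives it only in a non-quantitative way, so here one instead argues directly.

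Concretely, I would proceed as follows. First, fix $p$ and set up the one-sided comparison: using (\ref{eq:interpol}), (\ref{eq:ellreg1}) and Proposition \ref{prop:doubl}, for $0<r\le r_0$ one gets
\begin{align*}
\|w\|_{L^2(B_r(p))}\lesssim e^{C(\|V\|_{C^1}^{\frac{1}{2s}}+1)}\,\|y_{n+1}^{\frac{1-2s}{2}}\bar w\|_{L^2(B_r^+(p))}.
\end{align*}
Second, for the reverse estimate one wants
\begin{align*}
\|y_{n+1}^{\frac{1-2s}{2}}\bar w\|_{L^2(B_r^+(p))}\lesssim e^{C(\|V\|_{C^1}^{\frac{1}{2s}}+1)}\,\|w\|_{L^2(B_{2r}(p))}.
\end{align*}
This is the quantitative ``extension control'' that replaces the qualitative (\ref{eq:ext}). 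The way to obtain it quantitatively is to exploit the structure of the extension itself: writing $\bar w$ as the generalized Poisson extension of its boundary data $w$ (with a Poisson kernel whose decay in $y_{n+1}$ is governed by $s$), plus a correction coming from the inhomogeneity $Vw$ on the boundary, and then using the regularity estimates of the appendix together with the bulk doubling to absorb all contributions coming from an annulus $B_{2r}\setminus B_r$ at the cost of the exponential factor. Alternatively, one reduces to the boundary-data-free case by a Carleman argument on the annulus (Remark \ref{rmk:anti}) exactly as in the proof of Proposition \ref{prop:doubl}, so that the bulk norm on $B_r^+$ is controlled by the bulk norm on a larger half-ball times an exponential, and then caps this off against the boundary norm via (\ref{eq:interpol}) again applied on that larger half-ball. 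Third, once the two-sided comparison
\begin{align*}
e^{-C(\|V\|_{C^1}^{\frac{1}{2s}}+1)}\|y_{n+1}^{\frac{1-2s}{2}}\bar w\|_{L^2(B_r^+(p))}\lesssim \|w\|_{L^2(B_{2r}(p))}\lesssim e^{C(\|V\|_{C^1}^{\frac{1}{2s}}+1)}\|y_{n+1}^{\frac{1-2s}{2}}\bar w\|_{L^2(B_{2r}^+(p))}
\end{align*}
is in hand, the boundary doubling inequality follows immediately from the bulk doubling inequality of Proposition \ref{prop:doubl} applied between the scales $r$ and $2r$ (with the constants and $r_0$ adjusted, and using Remark \ref{rmk:ratios} to accommodate the shift in radii).

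The main obstacle, as indicated above, is the quantitative reverse bound $\|y_{n+1}^{\frac{1-2s}{2}}\bar w\|_{L^2(B_r^+)}\lesssim e^{C(\dots)}\|w\|_{L^2(B_{2r})}$ holding \emph{uniformly down to scale zero} with an eigenvalue dependence of the correct power $\|V\|_{C^1}^{1/2s}$. The subtlety is precisely the one Bellova and Lin encounter: a naive application of the trace inequality forces one onto $\lambda$-dependent balls of radius $\sim\|V\|_{C^1}^{-1/2s}$, which is too restrictive for the nodal-domain application. The resolution is to keep $V$ (equivalently $\lambda$) as a boundary quantity and handle it by the weighted interpolation inequality (\ref{eq:interpol}) from \cite{Rue} with the optimized choice of $\tau$, exactly as in the bulk doubling proof; this is what allows the radius $r_0$ to remain independent of $V$. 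Carrying this through carefully — tracking the powers of $\tau$ and of $r$, and verifying that all annular error terms genuinely absorb — is the technical heart of the argument; the rest is bookkeeping with the already-established three balls inequality and Corollaries \ref{cor:obaI}, \ref{cor:obaII}.
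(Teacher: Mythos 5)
Your high-level skeleton is correct --- you identify that the task is to establish a two-sided comparison between the weighted bulk norm of $\bar w$ and the boundary $L^2$-norm of $w$, and then let Proposition~\ref{prop:doubl} do the rest --- and your observation that the upper bound $\|w\|_{L^2(B_r)} \lesssim e^{C(\cdots)}\|y_{n+1}^{\frac{1-2s}{2}}\bar w\|_{L^2(B_{2r}^+)}$ follows from the Hardy-trace/Caccioppoli estimates matches Step~3 of the paper's proof. However, for the hard reverse direction your plan has a genuine gap. Your two proposed routes both fail to reproduce the actual mechanism: the Poisson-kernel decomposition is left entirely unexamined (and it is not clear that it yields the correct $\|V\|_{C^1}^{1/2s}$ exponent, because the extension is not the plain Poisson extension --- it carries the lower-order coefficients $b,c$ and the non-compactly-supported tail of $w$ outside $B_{2r}$); and the ``alternatively'' route is simply unavailable, since (\ref{eq:interpol}) bounds boundary by bulk and cannot be used to ``cap off'' the bulk by the boundary.

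What the paper actually uses --- and what is missing from your plan --- are two \emph{additional} ingredients. First, Lemma~\ref{lem:small}, a quantitative propagation-of-smallness estimate from boundary to bulk, proved via a \emph{second} Carleman inequality with a weight that stays finite at the origin (not the singular weight of Proposition~\ref{prop:Carl}). This is what converts the bulk normalization plus bulk doubling into a lower bound on $\|w\|_{H^1(\Gamma)} + \|\lim y_{n+1}^{1-2s}\partial_{n+1}w\|_{L^2(\Gamma)}$. Second, Lemma~\ref{lem:interp}, a new weighted interpolation inequality of the form $\|\nabla' w\|_{L^2(\R^n)} \lesssim \eta^{1+s}(\|y_{n+1}^{\frac{1-2s}{2}}\nabla\nabla' w\| + \|y_{n+1}^{\frac{1-2s}{2}}w\|) + \eta^{-(s+1)/s}\|w\|_{L^2(\R^n)}$, proved by a one-dimensional sliding-interval argument and Fubini. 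This is what downgrades the boundary $H^1$ lower bound to an $L^2$ lower bound, after substituting the equation $\lim y_{n+1}^{1-2s}\partial_{n+1}w = -c_s^{-1}Vw$ and using the boundary regularity estimates of the appendix (Lemma~\ref{lem:boundary1}) for the second-derivative term. Neither of these lemmas appears in your plan; without them the ``quantitative extension control'' you need is asserted rather than obtained, and the argument does not close.
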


\begin{rmk}
As above we emphasize that the maximal radius $r_0>0$ on which our doubling property holds true is independent of the eigenvalue $\lambda$ and the potential $V$. As in the case of the Laplacian on a manifold, it is a quantity which is purely determined by the coefficients of the manifold and of the equation. This remains true for the application to the Steklov problem.
\end{rmk}

In order to prove this result, we rely on two auxiliary lemmata which are also used by Bellova and Lin. The first is a slight generalization of the smallness estimate from \cite{BL14}:

\begin{lem}
\label{lem:small}
Let $w: \R^{n+1}_+ \rightarrow \R $ be a solution of 
\begin{align}
\label{eq:small}
(\p_{n+1} y_{n+1}^{1-2s} \p_{n+1} + y_{n+1}^{1-2s}\D_g + y_{n+1}^{1-2s}b \cdot \nabla + y_{n+1}^{1-2s}c)w = 0 \mbox{ on } B_{1}^+
\end{align}
with $\left\| y_{n+1}^{\frac{1-2s}{2}} w \right\|_{L^2(B_{1}^+)}\leq 1$ and $0 \leq \left\| b \right\|_{L^{\infty}(B_{1}^+)}, \left\| c \right\|_{L^{\infty}(B_{1}^+)} \leq K <\infty$.
Assume that
\begin{align*}
\left\|  w \right\|_{H^1(\Gamma)} + \left\| \lim\limits_{y_{n+1}\rightarrow 0} y_{n+1}^{1-2s} \p_{n+1} w \right\|_{L^2(\Gamma)} \leq \epsilon \ll 1,
\end{align*}
where $\Gamma = B_{\frac{3}{4}} \times \{0\}$.
Then there exist constants $C>0$, $\beta>0$ which only depend on 
$s$, $n$ and $K$ such that
\begin{align*}
\left\| y_{n+1}^{\frac{1-2s}{2}} w \right\|_{L^2(B_{\frac{1}{2}}^+)} \leq C \epsilon^{\beta}.
\end{align*} 
More precisely, there exists $\alpha \in (0,1)$ such that
\begin{align}
\label{eq:quant} 
\left\| y_{n+1}^{\frac{1-2s}{2}} w \right\|_{L^2(B_{\frac{1}{2}}^+)} \lesssim \left\| y_{n+1}^{\frac{1-2s}{2}} w \right\|_{L^2(B_{1}^+)}^{\alpha}\left( \left\|  w \right\|_{H^1(\Gamma)} + \left\| \lim\limits_{y_{n+1}\rightarrow 0} y_{n+1}^{1-2s} \p_{n+1} w \right\|_{L^2(\Gamma)} \right)^{1-\alpha}.
\end{align} 
\end{lem}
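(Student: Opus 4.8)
The plan is to prove the quantitative smallness estimate \eqref{eq:quant}, from which the qualitative bound with $\beta = 1-\alpha$ follows immediately once one normalizes $\left\| y_{n+1}^{\frac{1-2s}{2}} w \right\|_{L^2(B_1^+)} \leq 1$. The natural tool is a \emph{Carleman estimate with boundary terms} or, more elementarily, a \emph{three-spheres / propagation-of-smallness} argument run from the flat boundary $\Gamma$ into the interior half-ball; since the paper already has at its disposal the interior machinery (the Carleman estimate of Proposition \ref{prop:Carl}, its annular sharpening in Remark \ref{rmk:anti}, and the resulting three-balls inequality), the cleanest route is to combine a boundary Carleman estimate with these. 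Concretely, I would introduce a weight $\psi$ which is a function of the distance to a point $q$ lying slightly \emph{below} the hyperplane $\{y_{n+1}=0\}$ (so that the level sets of $\psi$ are spheres that foliate a neighbourhood of $B_{1/2}^+$ and are transversal to $\Gamma$), apply the degenerate-weight Carleman inequality to $\chi w$ for a suitable cutoff $\chi$ supported in $B_{3/4}^+$, and collect the boundary contributions on $\Gamma$ coming from the integrations by parts.

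The key steps, in order, are the following. First, fix a cutoff $\chi \in C_c^\infty(B_{3/4}^+)$ with $\chi \equiv 1$ on $B_{1/2}^+$; then $\nabla \cdot y_{n+1}^{1-2s}\nabla(\chi w) = f$ where $f$ is supported in the annular region $B_{3/4}^+ \setminus B_{1/2}^+$ and is controlled, via the equation \eqref{eq:small} and the Caccioppoli/elliptic estimate \eqref{eq:ellreg1}, by $\left\| y_{n+1}^{\frac{1-2s}{2}} w\right\|_{L^2(B_{3/4}^+)} \lesssim 1$ together with lower-order terms absorbing the $b,c$ contributions (here the dependence of the constants on $K$ enters). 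Second, apply the Carleman estimate in the variant adapted to a boundary point; the left-hand side controls $e^{\tau\psi}$-weighted norms of $\chi w$ on $B_{1/2}^+$ (where $\psi$ is bounded below by some $\psi_0$), while on the right-hand side one has the bulk term $\left\|e^{\tau\psi} f\right\|$ — which, because $f$ lives where $\psi \le \psi_1 < \psi_0$, decays like $e^{\tau \psi_1}$ — plus \emph{boundary} terms on $\Gamma$ of the form $\tau \int_\Gamma e^{2\tau\psi}(|w|^2 + |\nabla' w|^2 + |\lim y_{n+1}^{1-2s}\partial_{n+1}w|^2)$, which are bounded by $\tau\, e^{2\tau\psi_{\max}}\bigl(\left\|w\right\|_{H^1(\Gamma)}^2 + \left\|\lim y_{n+1}^{1-2s}\partial_{n+1}w\right\|_{L^2(\Gamma)}^2\bigr) \lesssim \tau e^{2\tau\psi_{\max}}\epsilon^2$. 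Third, rearranging gives $\left\| y_{n+1}^{\frac{1-2s}{2}} w\right\|_{L^2(B_{1/2}^+)} \lesssim e^{-c\tau}\left\| y_{n+1}^{\frac{1-2s}{2}} w\right\|_{L^2(B_1^+)} + e^{C\tau}\epsilon$ for all $\tau \geq \tau_0(K)$; optimizing in $\tau$ (balancing the two terms) yields \eqref{eq:quant} with an explicit $\alpha \in (0,1)$ determined by the ratio $c/(c+C)$ of the exponents, exactly the standard propagation-of-smallness interpolation.

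The main obstacle I anticipate is the careful bookkeeping of the \emph{boundary contributions on $\Gamma$} in the integration by parts producing the Carleman inequality near a boundary point: unlike the interior estimates of Section 2, here $\Gamma$ is a genuine (non-empty) piece of the boundary of the region on which we integrate, so one must verify that the geometry of the weight $\psi$ (i.e. placing the focal point $q$ strictly on the far side of $\{y_{n+1}=0\}$) makes the \emph{uncontrolled} boundary term — the one with the ``wrong sign'' normal derivative, $\int_\Gamma e^{2\tau\psi}(\partial_\nu\psi)|\chi w|^2$ — have a favourable sign or else be dominated by the data norms $\left\|w\right\|_{H^1(\Gamma)}$ and $\left\|\lim y_{n+1}^{1-2s}\partial_{n+1}w\right\|_{L^2(\Gamma)}$. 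A secondary technical point is handling the degeneracy of the weight $y_{n+1}^{1-2s}$ at $\Gamma$ in the trace terms: one uses the trace/interpolation inequality \eqref{eq:interpol} from \cite{Rue} (as already exploited in Step 3 of the proof of Proposition \ref{prop:Carl}) to pass between $\left\|w\right\|_{L^2(\Gamma)}$ and the weighted bulk norms, and this is precisely where the loss of powers of $\tau$ is absorbed into the exponential $e^{C\tau}$ without affecting the final interpolation exponent. The lower-order terms $b\cdot\nabla w$ and $cw$ present no essential difficulty: since they are $\lesssim K(|w| + |\nabla w|)$, they are absorbed into the positive bulk terms of the Carleman estimate once $\tau \geq \tau_0(K)$, which is why $C$ and $\beta$ depend only on $s$, $n$, and $K$.
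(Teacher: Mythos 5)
Your proposal captures the correct overall strategy — apply a Carleman estimate in which the $\Gamma$-boundary contributions are kept on the right-hand side and controlled by the assumed smallness of $\left\| w\right\|_{H^1(\Gamma)}$ and $\left\|\lim y_{n+1}^{1-2s}\partial_{n+1}w\right\|_{L^2(\Gamma)}$, use a cutoff supported in $B_{3/4}^+$ so the inhomogeneity lives in an annulus where the weight is strictly smaller, and optimize in $\tau$ to get the two-norm interpolation of \eqref{eq:quant}. This is the same route the paper takes. The one substantive difference is the choice of weight: you propose a weight $\psi$ based on the distance to a focal point $q$ placed strictly \emph{below} $\{y_{n+1}=0\}$, in the classical boundary-Carleman tradition, and you correctly flag that this forces you to track a potentially wrong-signed boundary term of the type $\int_\Gamma e^{2\tau\psi}(\partial_\nu\psi)|\chi w|^2$. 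The paper instead takes $\phi(y)=e^{-\beta|y|}-1$, a bounded radial weight centered \emph{on} the boundary at $y=0$. Because $\phi$ is a function of $|y|$, its normal derivative $\partial_{n+1}\phi = -\beta e^{-\beta|y|}\,y_{n+1}/|y|$ vanishes as $y_{n+1}\to 0$ (away from the origin); consequently the only boundary terms that survive the integration by parts are of the form $\tau\int_\Gamma(y\cdot\nabla'\phi)\,w\,\lim y_{n+1}^{1-2s}\partial_{n+1}w$, $\tau\int_\Gamma(y\cdot\nabla'\phi)(y\cdot\nabla'w)\,\lim y_{n+1}^{1-2s}\partial_{n+1}w$, etc. — all cross terms between the Dirichlet trace (or its tangential gradient) and the weighted Neumann trace. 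Each is directly dominated by $\tau\,e^{2\tau\phi_{\max}}\epsilon^2$ with no sign condition needed, so the obstacle you anticipate simply does not arise; nor does the trace/interpolation inequality \eqref{eq:interpol} play a role in this lemma. Both weights would likely work, but the paper's choice is the more economical one here because it is tuned to the degenerate structure of $\partial_{n+1}y_{n+1}^{1-2s}\partial_{n+1}$: with a focal point below $\Gamma$ the antisymmetric part of the conjugated operator picks up a nonzero $\partial_{n+1}\psi$ at the boundary, and the resulting boundary term involves $\lim y_{n+1}^{1-2s}(\partial_{n+1}w)^2$, which is \emph{not} bounded by the weighted Neumann trace when $s\neq\frac12$ — this is precisely the delicacy your ``secondary technical point'' hints at, and the paper's radial weight avoids it at the source rather than patching it afterward. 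One minor imprecision: you invoke the Caccioppoli bound \eqref{eq:ellreg1}, but that estimate is specific to the boundary condition $\Lambda w = Vw$ and a parameter choice $\tau\gtrsim\left\|V\right\|_{C^1}^{1/(2s)}$; in the present lemma no such boundary condition is assumed, and the paper uses the generic Caccioppoli estimate, bounding the boundary contribution directly by $\left\|w\right\|_{L^2(\Gamma)}\left\|\lim y_{n+1}^{1-2s}\partial_{n+1}w\right\|_{L^2(\Gamma)}\leq\epsilon^2$.
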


Secondly, as Bellova and Lin, we make use of an interpolation estimate which relates the boundary and the bulk contributions:

\begin{lem}[Interpolation]
\label{lem:interp}
Let $s\in(0,1)$ and $w\in H^{2}(y_{n+1}^{1-2s}d y, \R^{n+1}_+)\cap H^{1}(\R^n)$. Then for any $\eta>0$ it holds
\begin{align*}
\left\| \nabla' w \right\|_{L^2(\R^n)} \lesssim & \  \eta^{1+s} \left( \left\| y_{n+1}^{\frac{1-2s}{2}} \nabla \nabla' w \right\|_{L^2(\R^{n+1}_+)} + \left\| y_{n+1}^{\frac{1-2s}{2}}  w \right\|_{L^2(\R^{n+1}_+)} \right)\\
& + \eta^{- \frac{s+1}{s} } \left\| w \right\|_{L^2(\R^n)}.
\end{align*}
\end{lem}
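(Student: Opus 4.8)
The plan is to pass to the Fourier side in the tangential variables $x\in\R^n$. Writing $\hat{w}(\xi,y)$ for the partial Fourier transform of $w$ in $x$, Plancherel's theorem gives $\left\|\nabla'w\right\|_{L^2(\R^n)}^2=\int_{\R^n}|\xi|^2\,|\hat{w}(\xi,0)|^2\,d\xi$, so the whole matter reduces to estimating this integral. The main ingredient is the weighted $\dot H^s$-trace inequality for the Caffarelli--Silvestre extension, i.e. the flat counterpart of the interpolation estimate (\ref{eq:interpol}) from \cite{Rue}: for $U\in H^1(y_{n+1}^{1-2s}dy,\R^{n+1}_+)$ with a trace on $\{y_{n+1}=0\}$ one has
\[
\int_{\R^n}|\xi|^{2s}|\hat{U}(\xi,0)|^2\,d\xi\lesssim \left\|y_{n+1}^{\frac{1-2s}{2}}\nabla U\right\|_{L^2(\R^{n+1}_+)}^2+\left\|y_{n+1}^{\frac{1-2s}{2}}U\right\|_{L^2(\R^{n+1}_+)}^2,
\]
which, by a density argument, it suffices to verify for smooth, compactly supported $U$, where it follows for instance from the energy minimality of the $s$-harmonic extension (\cite{CaS}) or from a one-dimensional weighted trace bound applied frequency by frequency in $\xi$. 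Applying this with $U=\partial_j w$ for $j=1,\dots,n$ and summing over $j$ — which is legitimate since $w\in H^2(y_{n+1}^{1-2s}dy,\R^{n+1}_+)$ puts every second derivative $\partial_k\partial_j w$, $k\le n+1$, $j\le n$, in $L^2(y_{n+1}^{1-2s}dy)$ — yields
\[
\int_{\R^n}|\xi|^{2+2s}|\hat{w}(\xi,0)|^2\,d\xi\lesssim \left\|y_{n+1}^{\frac{1-2s}{2}}\nabla\nabla'w\right\|_{L^2(\R^{n+1}_+)}^2+\left\|y_{n+1}^{\frac{1-2s}{2}}\nabla'w\right\|_{L^2(\R^{n+1}_+)}^2.
\]

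The second term on the right is not permitted in the statement, so I would dispose of it by a plain bulk interpolation: since $|\xi|^2\le|\xi|^4+1$, Plancherel in $x$ gives $\left\|y_{n+1}^{\frac{1-2s}{2}}\nabla'w\right\|_{L^2(\R^{n+1}_+)}^2\le \left\|y_{n+1}^{\frac{1-2s}{2}}\nabla'\nabla'w\right\|_{L^2(\R^{n+1}_+)}^2+\left\|y_{n+1}^{\frac{1-2s}{2}}w\right\|_{L^2(\R^{n+1}_+)}^2$, which is in turn bounded by $\left\|y_{n+1}^{\frac{1-2s}{2}}\nabla\nabla'w\right\|_{L^2(\R^{n+1}_+)}^2+\left\|y_{n+1}^{\frac{1-2s}{2}}w\right\|_{L^2(\R^{n+1}_+)}^2$. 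Abbreviating $\mathcal B^2:=\left\|y_{n+1}^{\frac{1-2s}{2}}\nabla\nabla'w\right\|_{L^2(\R^{n+1}_+)}^2+\left\|y_{n+1}^{\frac{1-2s}{2}}w\right\|_{L^2(\R^{n+1}_+)}^2$, this gives $\int_{\R^n}|\xi|^{2+2s}|\hat{w}(\xi,0)|^2\,d\xi\lesssim\mathcal B^2$; note that exactly here the bulk term $\|y_{n+1}^{\frac{1-2s}{2}}w\|$ appearing in the statement enters.

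It then remains to interpolate this bound against the trace norm $\left\|w\right\|_{L^2(\R^n)}^2=\int_{\R^n}|\hat{w}(\xi,0)|^2\,d\xi$ by a high--low frequency splitting at a level $\Lambda>0$: on $\{|\xi|\le\Lambda\}$ I estimate $|\xi|^2\le\Lambda^2$, and on $\{|\xi|>\Lambda\}$ I estimate $|\xi|^2=|\xi|^{-2s}|\xi|^{2+2s}\le\Lambda^{-2s}|\xi|^{2+2s}$, obtaining $\left\|\nabla'w\right\|_{L^2(\R^n)}^2\lesssim\Lambda^2\left\|w\right\|_{L^2(\R^n)}^2+\Lambda^{-2s}\mathcal B^2$. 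Choosing $\Lambda=\eta^{-(1+s)/s}$, so that $\Lambda^2=\eta^{-2(1+s)/s}$ and $\Lambda^{-2s}=\eta^{2(1+s)}$, and taking square roots (using $\sqrt{a+b}\le\sqrt a+\sqrt b$ together with $\mathcal B\le\left\|y_{n+1}^{\frac{1-2s}{2}}\nabla\nabla'w\right\|_{L^2(\R^{n+1}_+)}+\left\|y_{n+1}^{\frac{1-2s}{2}}w\right\|_{L^2(\R^{n+1}_+)}$) produces precisely the asserted inequality. The only step that genuinely requires care is the weighted trace inequality invoked in the first paragraph — in particular, that it holds with exactly these weights and exponents on the relevant weighted Sobolev class, including the existence of the trace and the density of smooth functions — but this is by now a standard part of the Caffarelli--Silvestre toolbox and is the flat analogue of (\ref{eq:interpol}), already used in this paper; the frequency-splitting and Young-type manipulations in the remaining steps are routine.
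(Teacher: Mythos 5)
Your proof is correct, but it takes a genuinely different route from the paper's. The paper's argument is a physical-space decomposition: it first establishes a ball-scaled interpolation via a Hardy-trace inequality, then proves a one-dimensional \emph{multiplicative} interpolation $\|\nabla' w\|_{L^2} \lesssim \mathcal{B}^{1/(1+s)}\|w\|_{L^2}^{s/(1+s)}$ via a stopping-time argument over nested intervals (distinguishing, interval by interval, whether the bulk or the boundary term dominates), and finally passes to dimension $n$ by Fubini and then to the $\eta$-form by Young's inequality. You instead work on the Fourier side throughout: you apply the flat $\dot H^s$-trace inequality to $U=\partial_j w$ frequency-by-frequency, absorb the resulting $\|y_{n+1}^{(1-2s)/2}\nabla' w\|$ term by the elementary bound $|\xi|^2\le|\xi|^4+1$, and then perform a high--low frequency split at level $\Lambda=\eta^{-(1+s)/s}$. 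The two proofs land on identical exponents, and each step of yours checks out: the trace inequality you invoke holds (by the standard rescaling $y\mapsto y/|\xi|$ plus a one-dimensional trace bound, or by energy-minimality of the extension), the absorption step is a valid consequence of $w\in H^2(y_{n+1}^{1-2s}dy)$, and the frequency split with $\Lambda^2=\eta^{-2(1+s)/s}$, $\Lambda^{-2s}=\eta^{2(1+s)}$ reproduces the claimed powers of $\eta$ after taking square roots. Your route is shorter and more transparent in the flat Euclidean setting that the lemma is stated in; the paper's physical-space argument, which follows Bellova--Lin, has the advantage of not relying on the global Fourier structure and is therefore more readily portable to non-flat or localized settings, but here either works.
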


Using these two results, we can prove the boundary doubling estimate along the lines of \cite{BL14}.

\begin{proof}[Proof of Proposition \ref{prop:bdoubl}]
For simplicity we assume that $r_0\geq 1$, that there are local coordinates for the manifold on patches of the size $r=1$ and prove the estimate in this setting only. For arbitrary radii $0 < r\lesssim 1$ the argument follows by scaling.  
As in the argument of Bellova and Lin we proceed in three steps:\\
\emph{Step 1: Lower bounds on boundary contributions.} Normalizing $w$ such that $\left\| y_{n+1}^{\frac{1-2s}{2}} w\right\|_{L^2(B_{1}^+)}= 1$, we claim that there exists a constant $C>0$ such that we have a lower bound on the boundary contributions of the following form:
\begin{align}
\label{eq:lowb}
\left\| w \right\|_{H^1(B_{\frac{3}{8}})} + \left\| \lim\limits_{y_{n+1} \rightarrow 0} y_{n+1}^{1-2s} \p_{n+1} w\right\|_{L^2(B_{\frac{3}{8}})} \geq e^{- C( \left\| V \right\|_{C^1}^{\frac{1}{2s}} +1 )}.
\end{align}
Indeed, this follows from the result of Lemma \ref{lem:small}: If the claim were not true, then for any $\tilde{C}>0$ Lemma \ref{lem:small} would imply
\begin{align*}
\left\| y_{n+1}^{\frac{1-2s}{2}} w \right\|_{L^2(B_{\frac{3}{8}}^+)} \leq C e^{- \beta \tilde{C}(\left\| V \right\|_{C^1}^{\frac{1}{2s}} +1)}.
\end{align*}
Recalling the (bulk) normalization of $w$ in $B_{1}^+$ and the bulk doubling inequality
\begin{align*}
e^{-C(\left\| V\right\|_{C^1} + 1)} \lesssim e^{-C(\left\| V\right\|_{C^1} + 1)} \left\| y_{n+1}^{\frac{1-2s}{2}} w \right\|_{L^2(B_{1}^+)}  \lesssim \left\| y_{n+1}^{\frac{1-2s}{2}} w \right\|_{L^2(B_{\frac{3}{8}}^+)},
\end{align*}
this would however yield a contradiction if $\tilde{C}$ is chosen sufficiently large in dependence of $\beta$ and $\left\| V \right\|_{C^1(M)}$.\\
\emph{Step 2: Lower bounds on the $L^2$ boundary contributions.}  We claim that there exists a constant $0<C< \infty$ depending only on $M$ such that
\begin{align*}
\left\| w \right\|_{L^2(B_{\frac{3}{8}})} \geq (1+\left\| V \right\|_{C^1})^{-1}e^{-C(\left\| V \right\|_{C^1}^{\frac{1}{2s}} + 1)}.
\end{align*}
The argument for this claim will on the one hand make use of the fact that $w$ is a solution of $\Lambda w = V w$ and of maximal regularity estimates for the generalized Caffarelli-Silvestre extension operator on the other hand. As $w$ is a solution of $\Lambda w = V w$, (\ref{eq:lowb}) immediately turns into
\begin{align*}
\left\| w \right\|_{\dot{H}^1(B_{\frac{3}{8}})} + \left\| V \right\|_{C^1} \left\|  w\right\|_{L^2(B_{\frac{3}{8}})} \geq e^{- C( \left\| V \right\|_{C^1}^{\frac{1}{2s}} +1 )},
\end{align*}
which leads to
\begin{align}
\label{eq:lowb1}
\left\| w \right\|_{\dot{H}^1(B_{\frac{3}{8}})} +  \left\|  w\right\|_{L^2(B_{\frac{3}{8}})} \geq (1+\left\| V \right\|_{C^1})^{ -1} e^{- C( \left\| V \right\|_{C^1}^{\frac{1}{2s}} +1 )}.
\end{align}
Thus, in order to obtain the desired lower $L^2$ bound, it suffices to control the homogeneous $H^1$ norm of $w$. This is achieved via the interpolation estimate, Lemma \ref{lem:interp}. We define $\tilde{w}:= w\varphi$, where $\varphi$ is a radial cut-off function which is equal to one on $B_{\frac{7}{8}}^+$ and vanishes outside $B_{1}^+$.
Employing the interpolation inequality of Lemma \ref{lem:interp}, then yields
\begin{align*}
\left\| \nabla' \tilde{w} \right\|_{L^2(\R^n)}
\leq & \ c \left(\eta^{1+s} \left(\left\| y_{n+1}^{\frac{1-2s}{2}} \nabla \nabla' \tilde{ w} \right\|_{L^2(\R^{n+1}_+)} + \left\| y_{n+1}^{\frac{1-2s}{2}} \tilde{ w} \right\|_{L^2(\R^{n+1}_+)} \right) \right.\\
&\left. + \eta^{- \frac{s+1}{s}} \left\| \tilde{w} \right\|_{L^2(\R^{n})}\right)\\
\leq & \ c (\eta^{1+s}  \left\| V \right\|_{L^{\infty}} \left\| \nabla' \tilde{w} \right\|_{L^2(\R^n)} + \eta^{1+s} \left\|\nabla' V \right\|_{L^{\infty}} \left\| \tilde{w} \right\|_{L^2(\R^n)} \\
&\quad  \quad  + \eta^{1+s}\left\| y_{n+1}^{\frac{1-2s}{2}} \tilde{w} \right\|_{L^2(\R^{n+1}_+)} 
+ \eta^{- \frac{s+1}{s}} \left\| \tilde{w} \right\|_{L^2(\R^{n})}).
\end{align*}
Here we made use of Lemma \ref{lem:boundary1} in order to bound the second order derivatives.
For $0<\eta = \eta(\left\| V\right\|_{L^{\infty}}) \ll 1$, which is to be fixed later, this implies
\begin{align*}
\left\| \nabla' \tilde{w} \right\|_{L^2(\R^n)}
&\leq c (\eta^{1+s}  \left\| \nabla' V \right\|_{L^{\infty}} \left\| \tilde{w} \right\|_{L^2(\R^n)}\\
& \quad \quad + \eta^{1+s}\left\| y_{n+1}^{\frac{1-2s}{2}} \tilde{w} \right\|_{L^2(\R^{n+1}_+)} + \eta^{- \frac{s+1}{s}} \left\| \tilde{w} \right\|_{L^2(\R^{n})}),
\end{align*}
Hence,
\begin{align*}
\left\| \nabla' w \right\|_{L^2(B_{\frac{3}{8}})} &\leq c \left\| \nabla' \tilde{w} \right\|_{L^2(\R^n)}\\
&\leq c (\eta^{1+s} \left\| \nabla' V\right\|_{L^{\infty}(\R^n)} \left\| \tilde{w} \right\|_{L^2(\R^n)} + \eta^{1+s}\left\| y_{n+1}^{\frac{1-2s}{2}} \tilde{w} \right\|_{L^2(\R^{n+1}_+)} \\ 
& \quad \quad+ \eta^{- \frac{s+1}{s}} \left\| \tilde{w} \right\|_{L^2(\R^{n})})\\
&\leq c( \eta^{1+s}  \left\| y_{n+1}^{\frac{1-2s}{2}} w \right\|_{L^2(B_{1}^+)}  + \eta^{- \frac{s+1}{s}}( \left\| \nabla' V \right\|_{L^{\infty}} +1) \left\| w \right\|_{L^2(B_{\frac{7}{16}})} )\\
&\leq c( \eta^{1+s}   + \eta^{- \frac{s+1}{s}}( \left\| \nabla' V \right\|_{L^{\infty}} +1) \left\| w \right\|_{L^2(B_{\frac{7}{16}})} ).
\end{align*} 
Thus, choosing $\eta$ so small that $c \eta^{1+s} = \frac{1}{2} (1+\left\| V \right\|_{C^1})^{-1}e^{-C(\left\| V \right\|_{C^1}^{\frac{1}{2s}}+1)}$ and using (\ref{eq:lowb1}), then yields
\begin{align*}
(1+\left\| V \right\|_{C^1})^{-1} e^{-C(\left\| V \right\|_{C^1}^{\frac{1}{2s}} +1)} \leq & \  e^{C_s (\left\| V \right\|_{C^1}^{\frac{1}{2s}}+1) }\left\| w \right\|_{L^2(B_{\frac{7}{16}})} \\
&+  \frac{1}{2}(1+ \left\| V \right\|_{C^1})^{-1} e^{-C(\left\| V \right\|_{C^1}^{\frac{1}{2s}} +1)} .
\end{align*}
Absorbing the factors which are only polynomial in $\left\| V \right\|_{C^1}$ into the contributions which are exponential in $\left\| V \right\|_{C^1}$ proves the claim.\\
\emph{Step 3: Boundary Doubling.}
Let $\tilde{\varphi}$ be a cut-off function which is one for $0<|y|\leq \frac{7}{8}$ and which is zero for $|y|\geq \frac{15}{16}$. Due to the Hardy-trace inequality and the elliptic estimates for $w$ (c.f. equation (\ref{eq:ellreg1})), we infer
\begin{align*}
\left\| w \right\|_{L^2(B_{\frac{7}{8}})} &\lesssim \left\| \tilde{\varphi}w \right\|_{L^2(B_{\frac{7}{8}})} 
\lesssim \left\| y_{n+1}^{1-2s} \nabla (\tilde{\varphi}w) \right\|_{L^2(\R^{n+1}_+)} \\
& \lesssim  \left\| y_{n+1}^{\frac{1-2s}{2}} \nabla w\right\|_{L^2(B_{\frac{15}{16}}^+)} + \left\| y_{n+1}^{\frac{1-2s}{2}}  w\right\|_{L^2(B_{\frac{15}{16}}^+)} \\
& \lesssim  C(\left\| V \right\|_{C^1},s) \left\| y_{n+1}^{1-2s}  w \right\|_{L^2(B_1^+)} \lesssim C(\left\| V \right\|_{C^1},s). 
\end{align*}
Again absorbing the polynomial contributions in $\left\| V \right\|_{C^1}$ into the exponential factor, results in the doubling inequality:
\begin{align*}
\left\| w \right\|_{L^2(B_{\frac{7}{8}})} \lesssim e^{C (\left\| V \right\|_{C^1}^{\frac{1}{2s}}+1)} \left\| w \right\|_{L^2(B_{\frac{7}{16}})}.
\end{align*}
\end{proof}

Thus, it remains to provide the proofs of the Lemma \ref{lem:small} and Lemma \ref{lem:interp}. We begin with the smallness lemma. It can be deduced as a consequence of a boundary Carleman inequality (c.f. \cite{LZ98}, \cite{LRob}) similar to our main estimate. Instead of using a weight which explodes at the origin, we however now use a weight which attains a finite value at the origin (in our case $\phi(0)=0$) and decreases in the radial variable.

\begin{proof}[Proof of Lemma \ref{lem:small}]
It suffices to prove estimate (\ref{eq:quant}). This can be deduced via a Carleman estimate. \\
\emph{Step 1: Carleman estimate.} Consider $w: \R^{n+1}_+ \rightarrow \R$ with $w\in C^{\infty}_{0}(\overline{B_{1}^+})$. Let $\phi: \R^{n+1}_+ \rightarrow \R$ be a pseudoconvex weight function with respect to the operator
\begin{align*}
L_s := \p_{n+1}y_{n+1}^{1-2s}\p_{n+1} + y_{n+1}^{1-2s}\D_{g}.
\end{align*}
Then we claim that the following Carleman estimate holds:
\begin{align*}
&\tau^{\frac{3}{2}} \left\| e^{\tau \phi} (y\cdot \nabla \phi)(y \cdot \nabla (y\cdot \nabla \phi)) |y|^{-1} y_{n+1}^{\frac{1-2s}{2}} w \right\|_{L^2(\R^{n+1}_+)} \\
&+ 
\tau^{\frac{1}{2}} \left\| e^{\tau \phi} (y\cdot \nabla \phi)(y \cdot \nabla (y\cdot \nabla \phi)) y_{n+1}^{\frac{1-2s}{2}} \nabla w \right\|_{L^2(\R^{n+1}_+)}\\
 \lesssim & \ \left\| e^{\tau \phi} |y| y_{n+1}^{\frac{2s-1}{2}} (\p_{n+1}  y_{n+1}^{1-2s} \p_{n+1} + y_{n+1}^{1-2s} \D_g) w \right\|_{L^2(\R^{n+1}_+)}\\
&+ \tau^{\frac{1}{2}} \left\| e^{\tau \phi} |y \cdot \nabla' \phi |^{\frac{1}{2}} w   \right\|_{L^2(\R^n)}
 +  \tau^{\frac{1}{2}} \left\| e^{\tau \phi} |y \cdot \nabla' \phi |^{\frac{1}{2}} |y| \nabla' w   \right\|_{L^2(\R^n)}\\
&+ \tau^{\frac{1}{2}} \left\| e^{\tau \phi} |y \cdot \nabla' \phi |^{\frac{1}{2}} \lim\limits_{y_{n+1}\rightarrow 0} y_{n+1}^{1-2s} \p_{n+1} w   \right\|_{L^2(\R^n)}.
\end{align*}
For this purpose we carry out the same computations as those in the proof of Proposition \ref{prop:Carl}. However, instead of using the equation to estimate the boundary errors, we keep them on the right hand side of the Carleman estimate. In Cartesian coordinates these boundary contributions amount to terms of the form
\begin{align*}
& \tau \int\limits_{\R^{n}} (y\cdot \nabla' \phi) w \lim\limits_{y_{n+1}\rightarrow 0} y_{n+1}^{1-2s} \p_{n+1}w dy, \\
& \tau \int\limits_{\R^{n}} (y\cdot \nabla' \phi) (y \cdot \nabla' w) \lim\limits_{y_{n+1}\rightarrow 0} y_{n+1}^{1-2s} \p_{n+1}w dy, \\
& \tau \int\limits_{\R^{n}} (y\cdot \nabla' (y\cdot \nabla' \phi)) w \lim\limits_{y_{n+1}\rightarrow 0} y_{n+1}^{1-2s}\p_{n+1} w dy.
\end{align*}
Thus, they can be estimated by the following contributions
\begin{align*}
&\tau \left\| |y \cdot \nabla' \phi|^{\frac{1}{2}} w \right\|_{L^2(\R^n)}^2, \  \tau \left\| |y \cdot \nabla' \phi|^{\frac{1}{2}} y\cdot \nabla' w \right\|_{L^2(\R^n)}^2, \\ 
&\tau \left\| |y \cdot \nabla' \phi|^{\frac{1}{2}} \lim\limits_{y_{n+1} \rightarrow 0} y_{n+1}^{1-2s}\p_{n+1}  w \right\|_{L^2(\R^n)}^2.
\end{align*}
This yields the desired estimate. We remark that it remains true for operators with lower order contributions as long as they can be absorbed in the left hand side.\\
\emph{Step 2: Proving the interpolation estimate (\ref{eq:interpol}).}
In order to prove the interpolation estimate, we consider a weight function which decays in the radial variable and vanishes at $y=0$, e.g. $\phi(y) = e^{- \beta |y|} - 1$ for a sufficiently large fixed $\beta$. Defining $\tilde{w}:= w \varphi $, where $\varphi$ is a radial cut-off function which equals one for $|y|\leq \frac{2}{3}$ and vanishes for $|y|\geq \frac{3}{4}$ and using Caccioppoli's estimate leads to the bound
\begin{align*}
e^{\tau \phi(\frac{1}{2})} \left\| w \right\|_{L^2(B_{\frac{1}{2}}^+)} 
\lesssim & \ e^{\tau \phi(\frac{2}{3}) }\left\| w \right\|_{L^2(B_{1}^+)} \\
&+  \left\| w   \right\|_{L^2(B_{\frac{3}{4}})}+   \left\| \nabla' w   \right\|_{L^2(B_{\frac{3}{4}})}
+ \left\|  \lim\limits_{y_{n+1}\rightarrow 0} y_{n+1}^{1-2s} \p_{n+1} w   \right\|_{L^2(B_{\frac{3}{4}})}.
\end{align*}
Multiplying with $e^{-\tau \phi(\frac{1}{2})}$ and optimizing the right hand side in $\tau$ yields (\ref{eq:quant}).
\end{proof}

\begin{proof}[Proof of Lemma \ref{lem:interp}]
We prove the interpolation estimate by first applying the Hardy-trace estimate for functions with support in the unit ball, then rescaling to arbitrary domains and finally ``optimizing'' in the resulting parameter.\\
\emph{Step 1: Hardy-trace inequality.} We claim that for all $w\in H^{2}(y_{n+1}^{1-2s}dy, B_{\mu}^+)$ the following interpolation inequality holds:
\begin{equation}
\begin{split}
\label{eq:addint}
\left\| \nabla' w \right\|_{L^2(B_{\mu})} \lesssim & \ \mu^s \left( \left\| y_{n+1}^{\frac{1-2s}{2}}\nabla \nabla' w \right\|_{L^2(B_{2\mu}^+)} +\left\| y_{n+1}^{\frac{1-2s}{2}}w \right\|_{L^2(B_{2\mu}^+)}\right)\\
& + \mu^{-1}\left\| w \right\|_{L^2(B_{2\mu})}.
\end{split}
\end{equation}
Indeed, first consider
$w \in  H^{2}(y_{n+1}^{1-2s}dy, B_{1}^+)\cap C^{\infty}_{0}(B_{1}^+)$. Then the Hardy-trace inequality implies
\begin{align*}
\left\| \nabla' w \right\|_{L^2(B_{1})} &\leq  \left\| |y|^{-s} \nabla' w  \right\|_{L^2(B_{1})} \lesssim \left\| y_{n+1}^{\frac{1-2s}{2}} \nabla \nabla' w \right\|_{L^2(B_{1}^+)}\\
& \lesssim \left\| y_{n+1}^{\frac{1-2s}{2}} \nabla \nabla' w \right\|_{L^2(B_{1}^+)}  + \left\| w\right\|_{L^2(B_{1})}. 
\end{align*}
Thus, by rescaling, (\ref{eq:addint}) holds for any $w\in H^{2}(y_{n+1}^{1-2s}dy, \R^{n+1}_+)\cap C^{\infty}_{0}(B_{\mu}^+)$. In order to dispose of the compact support condition, we make use of a cut-off argument which then yields
\begin{align*}
\left\| \nabla' w \right\|_{L^2(B_{\mu})} \lesssim & \ \mu^s \left( \left\| y_{n+1}^{\frac{1-2s}{2}}\nabla \nabla' w \right\|_{L^2(B_{2\mu}^+)} + \left\| y_{n+1}^{\frac{1-2s}{2}}w \right\|_{L^2(B_{2\mu}^+)} \right) \\
&+ \mu^{-1}\left\| w \right\|_{L^2(B_{2\mu})}.
\end{align*}
Thus, the full statement of (\ref{eq:addint}) follows.\\
\emph{Step 2: The one-dimensional interpolation inequality.} We claim that for all $w \in H^2(y_{2}^{1-2s}dy, \R^{2}_+)\cap C^{\infty}_0(\overline{\R^{2}_+})$ we have the following multiplicative interpolation inequality
\begin{align}
\label{eq:int1D}
\left\| \nabla w' \right\|_{L^2(\R)} \lesssim \left(  \left\| y_{2}^{\frac{2s-1}{2}} \nabla \nabla' w \right\|_{L^2(\R^2_+)} +  \left\| y_{2}^{\frac{2s-1}{2}} w \right\|_{L^2(\R^2_+)}\right)^{\frac{1}{1+s}}  \left\| w \right\|_{L^2(\R)}^{\frac{s}{s+1}}.
\end{align}
We prove this one-dimensional interpolation inequality by distinguishing two cases and by remarking that (\ref{eq:addint}) remains valid for intervals: More precisely, for any $a\in \R$ it holds:
\begin{equation}
\label{eq:addint1}
\tag{\ref{eq:addint}'}
\begin{split}
\left\| \nabla' w \right\|_{L^2([a,a+\mu])} \lesssim & \ \mu^s \left( \left\| y_{2}^{\frac{1-2s}{2}}\nabla \nabla' w \right\|_{L^2([a-\mu,a+2\mu]\times [0,2\mu])} \right. \\
& \left. + \left\| y_{2}^{\frac{1-2s}{2}} w \right\|_{L^2([a-\mu,a+2\mu]\times [0,2\mu])} \right)+ \mu^{-1}\left\| w \right\|_{L^2([a-\mu,a+2\mu])}.
\end{split}
\end{equation}
Let $I=[a,b]$ be an arbitrary but fixed interval and let $w\in  H^2(y_{2}^{1-2s}dy, \R^{2}_+)\cap C^{\infty}_0(\overline{\R^{2}_+})$. We now distinguish two cases:
\begin{itemize}
\item[Case 1:] Setting $I_1:= \left( a, a+ \frac{|I|}{N} \right)$, $2I_{1}:=\left( a - 2\frac{|I|}{N}, a+ 2\frac{|I|}{N} \right)$ and $I_1^+:= 2I_{1}\times [0,2|I_1|]$, for some $N\in \N$ we assume first that
\begin{align}
\label{eq:case1}
|2I_1|^s \left( \left\| y_{2}^{\frac{1-2s}{2}}\nabla \nabla' w \right\|_{L^2(2I_1^+)} + \left\| y_{2}^{\frac{1-2s}{2}} w \right\|_{L^2(2I_1^+)} \right) \geq |2I_1|^{-1}\left\| w \right\|_{L^2(2I_1)}.
\end{align}
Therefore, in this case we obtain the estimate
\begin{align*}
\left\| \nabla' w \right\|_{L^2(I_1)} \lesssim \left( \frac{|I|}{N} \right)^{s} \left( \left\| y_{2}^{\frac{1-2s}{2}}\nabla \nabla' w \right\|_{L^2(2I_1^+)} + \left\| y_{2}^{\frac{1-2s}{2}}w \right\|_{L^2(2I_1^+)} \right).
\end{align*}
This case will become negligible in the limit $N \rightarrow \infty$.
\item[Case 2:] Now assume that (\ref{eq:case1}) is false on $I_1:=\left(a,a + \frac{|I|}{N}\right)$. Due to the support assumption on $w$ and the monotonicity of the summands of (\ref{eq:addint1}) in $\mu$, there exists $c\in \R$ with $a+ \frac{|I|}{N}<c<\infty$ such that for $U_1:= (a,c)$, $2J_1:= (a - 2\frac{|I|}{N}\ ,c)$, $2J_1^+:=(a - 2\frac{|I|}{N}\,c)\times (0,(c-a))$ the following equality is satisfied:
\begin{align*}
|2J_1|^s \left( \left\| y_{2}^{\frac{1-2s}{2}}\nabla \nabla' w \right\|_{L^2(2J_1^+)} + \left\| y_{2}^{\frac{1-2s}{2}}w \right\|_{L^2(2J_1^+)}  \right)= |2J_1|^{-1}\left\| w \right\|_{L^2(2J_1)}.
\end{align*} 
Hence, in this case the interpolation inequality (\ref{eq:addint1}) can be written as
\begin{align*}
\left\| \nabla' w \right\|_{L^2(J_1)} \lesssim \left( \left\| y_{2}^{\frac{1-2s}{2}} \nabla \nabla' w \right\|_{L^2(2J^+_1)}+ \left\| y_{2}^{\frac{1-2s}{2}}w \right\|_{L^2(2J_1^+)}  \right)^{\frac{1}{1+s}} \left\| w \right\|_{L^2(2J_1)}^{\frac{s}{s+1}}.
\end{align*}
\end{itemize}
We now iterate the described estimation procedure. After at most $N$ steps we are then left with (almost) disjoint intervals $I_i, J_i$ (which are possibly empty) and
\begin{align*}
\left\| \nabla' w \right\|_{L^2(I)} \lesssim & \  \sum\limits_{i=1}^{N} \left( \frac{2|I|}{N} \right)^s \left( \left\| y_{2}^{\frac{1-2s}{2}} \nabla \nabla' w \right\|_{L^2(2I_{i}^+)} + \left\| y_{2}^{\frac{1-2s}{2}}w \right\|_{L^2(2I_i^+)}  \right)\\
&+
\sum\limits_{i=1}^{N} \left( \left\| y_{2}^{\frac{1-2s}{2}}\nabla \nabla' w \right\|_{L^2(2J_i)} + \left\| y_{2}^{\frac{1-2s}{2}}w \right\|_{L^2(2J_i^+)}  \right)^{\frac{1}{1+s}} \left\| w \right\|_{L^2(2J_i)}^{\frac{s}{1+s}}\\
 \lesssim & \ \left( \frac{|I|}{N} \right)^s \left( \left\| y_{2}^{\frac{1-2s}{2}} \nabla \nabla' w \right\|_{L^2(\R^2_+)} + \left\| y_{2}^{\frac{1-2s}{2}}  w \right\|_{L^2(\R^2_+)} \right) \\
&+ \left( \left\| y_{2}^{\frac{1-2s}{2}} \nabla \nabla' w\right\|_{L^2(\R^{2}_+)} + \left\| y_{2}^{\frac{1-2s}{2}}  w \right\|_{L^2(\R^2_+)}\right)^{\frac{1}{1+s}} \left\| w \right\|_{L^2(\R)}^{\frac{s}{s+1}},
\end{align*}
where we used the disjointness of the intervals $I_i$ and $J_i$ for $i\in \{1,...,N\}$ in combination with Hölder's inequality. 
Passing to the limit in $N\rightarrow \infty$ finally yields the desired result.\\
\emph{Step 3: Extension to arbitrary dimensions.} The $n$-dimensional result follows from the one-dimensional result by applying Fubini's theorem and the one-dimensional estimate in one variable while keeping the other variables fixed. Combining this with Hölder's inequality implies the claim. Using the density of $C_{0}^{\infty}$, we finally pass from functions in $w\in  H^2(y_{n+1}^{1-2s}dy, \R^{n+1}_+)\cap C^{\infty}_0(\overline{\R^{n+1}_+}))$ to arbitrary $H^2(y_{n+1}^{1-2s}dy, \R^{n+1}_+)\cap L^{2}(\R^n)$ functions.
\end{proof}

\section[Nodal Domains]{Consequences of the Boundary Doubling Property: Estimates on Nodal Domains.}

In this section we give a sketch of the consequences of the boundary doubling property by providing upper bounds on the $(n-1)$-dimensional Hausdorff measure of the nodal domains of eigenfunctions of the generalized Dirichlet-to-Neumann map on a real analytic manifold $(M,g)$ without boundary. As in \cite{DoF} and \cite{BL14}, this is a consequence of a complexification of the problem. Since this complexification can be achieved on complex balls of the size $\sim 1$, it is then possible to conclude via an integral geometry estimate \cite{HL10}. As these results have been well established since the work of \cite{DoF}, we do not go into detail in most of the arguments.

\begin{prop}[Analytic extension] 
\label{prop:analyt}
Let $(M,g)$ be a real analytic Riemannian manifold without boundary. Let $s\in(0,1)$ and let $w$ be a (generalized) eigenfunction of 
(\ref{eq:CSE}), (\ref{eq:GDN}) with analytic coefficients $b,c$.
Then $w$ has an analytic extension into the complex plane and for $0<r_2 < r_1 < r_0(g) < \infty $ it holds
\begin{align}
\label{eq:ana2}
\sup\limits_{z\in B_{r_2}\subset \C^n}|w(z)| \leq e^{C(\lambda^{\frac{1}{2s}} +1 )} \sup\limits_{x \in B_{r_1}\subset \R^n}|w(x)|.
\end{align}
\end{prop}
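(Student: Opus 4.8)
The plan is to combine the boundary doubling estimate of Proposition~\ref{prop:bdoubl} with the classical machinery for analytic continuation of solutions to elliptic equations with analytic coefficients, exactly along the lines of \cite{DoF} and \cite{BL14}. The starting point is that $w$ is the trace on $M \times \{0\}$ of the solution $\bar w$ of the degenerate elliptic extension \eqref{eq:CSE}; away from the boundary $\{y_{n+1}=0\}$ the equation \eqref{eq:CSE} is uniformly elliptic with analytic coefficients, so interior analytic-elliptic regularity (Morrey--Nirenberg) provides a holomorphic extension of $\bar w$ into a complex neighbourhood whose width is comparable to the distance from the boundary. The degeneracy of the weight $y_{n+1}^{1-2s}$ at $\{y_{n+1}=0\}$ is handled as in \cite{CaS}: since $w$ solves $(-\Delta_g)^s w = Vw = \lambda w$ with $s\in(0,1)$ and analytic $V$, one can either invoke the analytic hypoellipticity results available for the fractional Laplacian with analytic data, or — more in the spirit of the present paper — use that for $s\in(0,1)$ the relevant quantity $-c_s\lim y_{n+1}^{1-2s}\partial_{n+1}\bar w$ is analytic along $\{y_{n+1}=0\}$, which forces $w$ itself to be real analytic on $M$ and hence to admit a holomorphic extension to a complex ball $B_{\rho_0}\subset\C^n$ with $\rho_0 = \rho_0(g)>0$ independent of $\lambda$. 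I would spell this out by first establishing an a priori bound $\|\bar w\|_{H^1(y_{n+1}^{1-2s},B_R^+)} \lesssim e^{C(\lambda^{1/2s}+1)}\|w\|_{L^2(B_R)}$ (which follows from the trace inequality \eqref{eq:interpol}, the elliptic estimate \eqref{eq:ellreg1}, and the doubling Proposition~\ref{prop:doubl}), and then propagating this into an $L^\infty$ bound on the complexification.

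The key quantitative step is to track the constant through the Cauchy-estimate iteration. One fixes a scale $\sim 1$ and, using the bulk doubling estimate together with elliptic $L^2\to L^\infty$ estimates, obtains
$\sup_{B_{r_1}\subset\R^n}|w| \lesssim e^{C(\lambda^{1/2s}+1)}\|w\|_{L^2(B_{2r_1})}$ and, in the reverse direction, a lower bound of the same exponential type for $\|w\|_{L^2}$ on a slightly smaller ball. The analytic continuation radius $\rho_0$ is controlled by the ellipticity constants and the analyticity radius of the coefficients $g,b,c$, all of which are $\lambda$-independent; the only place $\lambda$ enters the Cauchy estimates is through the right-hand side $\lambda w$ of the equation, and since $\lambda^k$ is dominated by $e^{C(\lambda^{1/2s}+1)}$ for the finitely many derivatives needed to reach the majorant radius, the bound \eqref{eq:ana2} follows. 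More precisely, I would prove $|\partial^\alpha w(x)| \le C^{|\alpha|+1}|\alpha|!\, \rho_0^{-|\alpha|} e^{C(\lambda^{1/2s}+1)}\sup_{B_{r_1}}|w|$ by a standard induction on $|\alpha|$ using the equation to trade one order of differentiation for analytic-elliptic constants, and then sum the resulting Taylor series on $B_{r_2}\subset\C^n$ once $r_2 < \rho_0$.

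The main obstacle I anticipate is \emph{not} the quantitative bookkeeping but the qualitative analytic continuation through the degenerate boundary $\{y_{n+1}=0\}$: the Caffarelli--Silvestre operator is not uniformly elliptic there, so one cannot directly apply Morrey--Nirenberg to $\bar w$ up to the boundary. The clean way around this is to argue entirely on $M$ rather than on $M\times\R_+$: represent $(-\Delta_g)^s$ as an analytic pseudodifferential operator (for $s\in(0,1)$, on a compact analytic manifold, it has an analytic symbol), so the equation $(-\Delta_g)^s w = \lambda w$ becomes an equation to which analytic-elliptic regularity and the corresponding a~priori estimates in analytic-function norms apply directly, giving a uniform analyticity radius $\rho_0(g)$. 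Alternatively, one reduces as in \cite{BL14} to the even reflection of $\bar w$ across $\{y_{n+1}=0\}$ (available precisely because $\lim y_{n+1}^{1-2s}\partial_{n+1}\bar w$ is a prescribed analytic multiple of $w$), and then applies the analytic extension theory for the resulting uniformly elliptic equation with an analytic coefficient having an integrable singularity. Since these analytic-continuation statements are classical, as the paper itself notes, I would state the uniform analyticity radius as a lemma with a reference to \cite{Morrey}-type results, and devote the written proof to the quantitative passage from the $L^2$-doubling bound to \eqref{eq:ana2}.
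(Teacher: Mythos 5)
Your overall strategy --- deduce quantitative analyticity for the trace from the extension problem and track the eigenvalue through Cauchy estimates --- is the right one, and the derivative bound $|\partial^\alpha w|\le C^{|\alpha|+1}|\alpha|!\,\rho_0^{-|\alpha|}e^{C(\lambda^{1/2s}+1)}\sup_{B_{r_1}}|w|$ that you target is indeed what the proposition amounts to. However, the mechanism you propose for proving that bound is not the one in the paper, and as stated it has a gap.

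The paper's proof first \emph{rescales} to normalize the eigenvalue: $\tilde w(y):=w(\lambda^{-1/(2s)}y)$ solves the extension problem with $\lambda=1$ (and shrunken lower-order coefficients), and Corollary~\ref{cor:analytic} then yields $|\nabla'^\alpha\tilde w(p)|\le C\,C_s^k k!$ for $|\alpha|=k$. Scaling back, this gives an analytic extension of $w$ only into a complex ball of radius $\sim\lambda^{-1/(2s)}$ --- the paper explicitly stresses that the convergence radius obtained at this step depends on $\lambda$. The exponential $e^{C\lambda^{1/(2s)}}$ in \eqref{eq:ana2} is then produced by \emph{iterating} this small-ball extension roughly $\lambda^{1/(2s)}$ times (via translation) to reach a ball of $\lambda$-independent size. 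Your proposal skips the rescaling entirely and instead asserts a $\lambda$-independent analyticity radius $\rho_0(g)$ from the start, with $\lambda$ entering only as an exponential prefactor. That final package is correct, but your justification --- ``the only place $\lambda$ enters the Cauchy estimates is through the right-hand side $\lambda w$, and since $\lambda^k$ is dominated by $e^{C(\lambda^{1/(2s)}+1)}$ for the finitely many derivatives needed to reach the majorant radius'' --- does not hold up: summing a power series requires control of \emph{all} orders $k$, and the dominating regime is $k\sim\lambda^{1/(2s)}$, precisely where $\lambda^{k/(2s)}$ and $k!\,\rho_0^{-k}$ compete. Running a single induction on $|\alpha|$ that carries both the factorial and the exponential-in-$\lambda$ constant at a fixed radius is delicate, and the proposal does not set it up; the rescaling-then-iterate device is exactly what makes this clean. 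If you do want to present a direct induction, you must show, for every $k$, that $\lambda^{k/(2s)}\le C^{k}\,k!\,\rho_0^{-k}e^{C\lambda^{1/(2s)}}$, and organize the Morrey--Nirenberg recursion so that each invocation of the boundary condition costs exactly one factor of $\lambda$ and each spatial commutator costs one factor of $k/\rho_0$.

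Two further remarks on your alternatives for the degenerate boundary. The even-reflection route does not apply directly here: the Neumann data is $-c_s\lim_{y_{n+1}\to0}y_{n+1}^{1-2s}\partial_{n+1}\bar w = \lambda w\neq 0$, so an even reflection of $\bar w$ across $\{y_{n+1}=0\}$ produces a distributional source on the reflecting hyperplane rather than a clean interior equation; one would first have to subtract a particular solution (e.g.\ of the form $\lambda c_s^{-1}(2s)^{-1}y_{n+1}^{2s}w(y')+\dots$) to homogenize the Neumann condition, and this correction again brings $\lambda$ into the coefficients. The analytic-$\Psi$DO route for $(-\Delta_g)^s$ on an analytic manifold is a legitimate alternative approach, but it is genuinely different from the paper's, which stays entirely within the extension picture and relies on the $L^2$ regularity estimates of Lemma~\ref{lem:reg1}, Lemma~\ref{lem:boundary} and the Kato-trick induction of Corollary~\ref{cor:analytic}. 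In short: your high-level plan points in the right direction, but the crucial device (rescaling to $\lambda=1$ and iterating the small-ball extension) is missing, and without it the quantitative step is not justified.
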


\begin{proof}
In the case $s= \frac{1}{2}$ the analyticity can be obtained as a consequence of the theorem of elliptic iterates \cite{LM}. In the case of general $s\in (0,1)$ we employ Corollary \ref{cor:analytic}. In both cases the radius of convergence of the associated power series depends on $\lambda$:
Rescaling implies that $\tilde{w}(y):= w(\lambda^{- \frac{1}{2s}} y)$ satisfies
\begin{align*}
\p_{n+1} y_{n+1}^{1-2s} \p_{n+1} \tilde{w} + y_{n+1}^{1-2s}\D_g \tilde{w} + y_{n+1}^{1-2s} \lambda^{-\frac{1}{2s}}  b \cdot \nabla \tilde{w} +   y_{n+1}^{1-2s} \lambda^{-\frac{1}{s}} c \tilde{w} & = 0 \mbox{ on } B_{1}^+,\\
\lim\limits_{y_{n+1}\rightarrow 0}y_{n+1}^{1-2s} \p_{n+1} \tilde{w} &= \tilde{w} \mbox{ on } B_1. 
\end{align*}
Therefore, for a multi-index $\alpha$ of length $|\alpha|=k$, we have
\begin{align}
\label{eq:analyt}
|\nabla'^\alpha\tilde{w} (p)|\leq C C_s^k k!.
\end{align}
This allows to deduce the statement of the proposition: Indeed, scaling back yields that $w$ can be analytically extended into a complex neighbourhood of the size $B_{r \lambda^{-\frac{1}{2s}}}$ (where $r$ is the radius of convergence of the power series for $\lambda=1$) with an estimate of the form
\begin{align*}
\sup\limits_{z\in B_{\lambda^{-\frac{1}{2s}}r}\subset \C^n}|w(z)| \leq C \sup\limits_{x \in B_{C\lambda^{-\frac{1}{2s}}r}\subset \R^n}|w(x)|,
\end{align*}
for some $C>1$.
Applying this estimate to a translated version of $w$ and iterating this procedure $\lambda^{\frac{1}{2s}}$ times in the complex directions leads to the desired claim.
\end{proof}

Via the doubling property we then obtain the following estimate for the complex extension.

\begin{prop}[Doubling] 
\label{prop:doubl2}
Let $(M,g)$ be a real analytic Riemannian manifold without boundary. Let $s\in (0,1)$ and let $w$ be a (generalized) eigenfunction of 
(\ref{eq:CSE}), (\ref{eq:GDN}) with analytic coefficients $b,c$.
Then $w$ has an analytic extension into the complex plane and for $0<2 r < r_0(g) < \infty $ it holds
\begin{align*}
\sup\limits_{z\in B_{2 r}\subset \C^n}|w(z)| \leq e^{C(\lambda^{\frac{1}{2s}} +1 )} \sup\limits_{x \in B_{ r}\subset \R^n}|w(x)|.
\end{align*}
The constant $C$ only depends on the manifold $(M,g)$ and the constants $s$, $\left\| b \right\|_{L^{\infty}}$, $\left\| c \right\|_{L^{\infty}}$.
\end{prop}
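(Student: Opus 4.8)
The plan is to read Proposition~\ref{prop:doubl2} as the combination of the analytic extension estimate (\ref{eq:ana2}) of Proposition~\ref{prop:analyt} with the boundary doubling inequality of Proposition~\ref{prop:bdoubl}, the latter specialized to the constant potential $V=\lambda$ (so that $\left\|V\right\|_{C^1(M)}=\lambda$ and $\left\|V\right\|_{C^1}^{1/2s}=\lambda^{1/2s}$). First, by (\ref{eq:ana2}) applied with complex radius $2r$ and real radius $cr$ for a fixed $c>2$ (after possibly shrinking $r_0$ so that $cr<r_0$, cf.\ the freedom of ratios in Remark~\ref{rmk:ratios}),
\[
\sup_{B_{2r}\subset\C^n}|w|\;\leq\;e^{C(\lambda^{1/2s}+1)}\sup_{B_{cr}\subset\R^n}|w|.
\]
Thus everything reduces to an $L^\infty$ version of the boundary doubling, $\sup_{B_{cr}\subset\R^n}|w|\lesssim e^{C(\lambda^{1/2s}+1)}\sup_{B_{r}\subset\R^n}|w|$, which follows from Proposition~\ref{prop:bdoubl} by $\lceil\log_2c\rceil$ successive doublings once the $L^2$ statement has been upgraded to the sup-norm.

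This upgrade is the only point requiring care, and the key is to carry it out after rescaling $B_r$ to the unit ball. Writing $v(y):=w(ry)$ and $\bar v(y):=\bar w(ry)$ for the Caffarelli--Silvestre extension $\bar w$, the bulk equation is scale invariant and the boundary condition acquires the effective eigenvalue $\mu=\lambda r^{2s}$, with $\mu^{1/2s}=\lambda^{1/2s}r\leq\lambda^{1/2s}r_0$. At this fixed scale one may then chain: the interior and boundary Moser-type estimates recalled in the appendix (in the spirit of Lemma~\ref{lem:boundary1}) bound $\sup_{B_2\subset\R^n}|v|\leq\sup_{B_2^+}|\bar v|$ by $\left\|y_{n+1}^{\frac{1-2s}{2}}\bar v\right\|_{L^2(B_{5/2}^+)}$; the local boundedness of the extension by its trace, inequality (\ref{eq:ext}) from the proof of Proposition~\ref{prop:eigen} (which for eigenfunctions holds with a $w$-independent, $\lambda$-dependent constant via the finite multiplicity argument given there), bounds this by $\left\|v\right\|_{L^2(B_{5/2})}$; and finitely many applications of the $L^2$ boundary doubling of Proposition~\ref{prop:bdoubl} on the rescaled patch reduce it further to $\left\|v\right\|_{L^2(B_{1/2})}\leq\sup_{B_{1/2}}|v|\leq\sup_{B_1\subset\R^n}|v|$. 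All the constants appearing here depend only on $n$, $s$, $(M,g)$, $\left\|b\right\|_{L^\infty}$, $\left\|c\right\|_{L^\infty}$ and on $\mu$ in an at most exponential way through $\mu^{1/2s}=\lambda^{1/2s}r\leq\lambda^{1/2s}r_0$, hence each is $\lesssim e^{C(\lambda^{1/2s}+1)}$; and since the whole chain lives at one fixed scale there are no powers of $r$ to keep track of. Undoing the rescaling yields $\sup_{B_{2r}\subset\R^n}|w|\leq e^{C(\lambda^{1/2s}+1)}\sup_{B_r\subset\R^n}|w|$, which is the required $L^\infty$ doubling.

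I expect the main (and essentially only) obstacle to be exactly this $L^2\to L^\infty$ passage: if one tries to run the chain directly at scale $r$ instead of rescaling, the mismatch between the scaling of the weighted bulk norm $\left\|y_{n+1}^{\frac{1-2s}{2}}\bar w\right\|_{L^2(B_r^+)}$ and that of the trace norm $\left\|w\right\|_{L^2(B_r)}$ produces a spurious factor $r^{-(1-s)}$ that would destroy the scale invariance of the doubling constant. Rescaling to the unit ball removes it at once, since then the effective potential $\mu=\lambda r^{2s}$ stays controlled and all elliptic, trace and doubling constants remain of the admissible form. Every other ingredient --- the analytic extension (\ref{eq:ana2}), the bulk and boundary $L^2$ doubling, the Caccioppoli and trace interpolation estimates of the earlier sections --- is already at our disposal, so the remaining steps are routine, exactly as in \cite{DoF} and \cite{BL14}.
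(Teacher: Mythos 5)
Your overall architecture matches the paper's: normalize to unit scale, invoke the analytic extension estimate (\ref{eq:ana2}) to pass from complex to real sup, and then reduce to an $L^\infty$ boundary doubling obtained by chaining an $L^2\to L^\infty$ elliptic bound, a bulk-to-boundary passage, and the $L^2$ boundary doubling of Proposition~\ref{prop:bdoubl}. The final chain the paper runs is almost exactly the one you sketch. But there is a genuine gap in the middle link.

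You appeal to inequality (\ref{eq:ext}), $\|y_{n+1}^{(1-2s)/2}w\|_{L^2(B_r^+)}\leq C\|w\|_{L^2(B_r)}$, and claim it holds ``with a $w$-independent, $\lambda$-dependent constant'' of the admissible exponential size. This is not what the paper proves. Estimate (\ref{eq:ext}) is obtained in Section~\ref{sec:van} by a contradiction/blow-up argument: for a fixed $w$ one deduces the existence of \emph{some} constant $C$ and \emph{some} small radius $\bar r$, both depending on $w$, such that the inequality holds for $r\leq\bar r$. The finite-multiplicity remark removes the $w$-dependence within a fixed eigenspace, but neither step yields any quantitative control of $C$ in terms of $\lambda$; the paper itself flags this explicitly at the start of Section~4, calling the argument ``relatively weak and not very quantitative'' and announcing that the following sections replace it. Rescaling to the unit ball so that $\mu=\lambda r^{2s}\lesssim 1$ does not repair this, because (i) the constant $C$ in (\ref{eq:ext}) is still only known to be finite, not $\lesssim e^{C\mu^{1/2s}}$, and (ii) you need the inequality at the fixed radius $5/2$, which need not be below the $w$-dependent $\bar r$ produced by the compactness argument.

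The correct ingredient, and the one the paper actually uses, is the quantitative smallness propagation of Lemma~\ref{lem:small} (a boundary Carleman estimate) together with the bulk doubling. These give the bound
\[
\left\| y_{n+1}^{\frac{1-2s}{2}} w \right\|_{L^2(B_{1/2}^+)} \lesssim e^{C(\lambda^{1/2s}+1)}\left(\left\| w \right\|_{H^1(\Gamma)} + \left\| \lim_{y_{n+1}\to 0} y_{n+1}^{1-2s}\partial_{n+1}w \right\|_{L^2(\Gamma)}\right),
\]
and, after running the interpolation argument of Step~2 of the proof of Proposition~\ref{prop:bdoubl} (so that the $H^1$ and Neumann traces are absorbed using the equation $\Lambda w=\lambda w$ and Lemma~\ref{lem:interp}), the quantitative replacement (\ref{eq:boundbulk}) for your (\ref{eq:ext}):
\[
\left\| y_{n+1}^{\frac{1-2s}{2}} w \right\|_{L^2(B_{1/2}^+)} \lesssim e^{C(\lambda^{1/2s}+1)}\left\| w \right\|_{L^2(\Gamma)}.
\]
Substituting this for your use of (\ref{eq:ext}), and using Corollary~\ref{cor:variablefull} with Morrey's inequality for the $L^\infty$ elliptic bound (rather than a generic ``Moser-type estimate''), your chain closes exactly as in the paper. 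Everything else in your proposal — the reduction via (\ref{eq:ana2}), the rescaling, the iterated $L^2$ boundary doubling, the sandwiching of $L^2$ between two $L^\infty$ norms at unit scale — is fine.
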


\begin{proof}
We have to transfer the $L^2$ boundary doubling properties of the previous section to $L^{\infty}$ doubling properties. This is achieved by using the elliptic regularity estimates from Section \ref{sec:analytic} in combination with Lemma \ref{lem:small} and the $L^2$ (bulk) doubling properties. In order to obtain the claim of the proposition it suffices to estimate the right hand side (\ref{eq:ana2}) by $ e^{C(\lambda^{\frac{1}{2s}} +1 )} \sup\limits_{x \in B_{ \frac{r_2}{2}}\subset \R^n}|w(x)|$. Moreover, by scaling, we may assume $r_2= \frac{1}{2}$ and $r_1 = C_1r_2$ with $C_1>1$.\\
Using Lemma \ref{lem:small} and in particular estimate (\ref{eq:quant}) in combination with the bulk doubling properties of $w$, we infer
\begin{align*}
\left\| y_{n+1}^{\frac{1-2s}{2}} w \right\|_{L^2(B_{\frac{1}{2}}^+)} \lesssim e^{C( \lambda^{\frac{1}{2s}}+1)} \left( \left\| w \right\|_{H^1(\Gamma)} + \left\| \lim\limits_{y_{n+1}\rightarrow 0} y_{n+1}^{1-2s} \p_{n+1}w \right\|_{L^2(\Gamma)} \right).
\end{align*}
Here we used the notation of Lemma \ref{lem:small}. We point out that the constant $C$ depends on $\alpha$ (which however is a fixed constant). Relying on similar arguments as in step 2 of the proof of Proposition \ref{prop:bdoubl} and once more making use of the bulk doubling property, this estimate can be further improved to read
\begin{align}
\label{eq:boundbulk}
\left\| y_{n+1}^{\frac{1-2s}{2}} w \right\|_{L^2(B_{\frac{1}{2}}^+)} \lesssim e^{C( \lambda^{\frac{1}{2s}}+1)}  \left\| w \right\|_{L^2(\Gamma)} .
\end{align}
As a result, we can then conclude:
\begin{align*}
\left\| w \right\|_{L^{\infty}(B_{1/2}(z))} 
& \leq e^{C(\lambda^{\frac{1}{2s}}+1)}\left\| w \right\|_{L^{\infty}(B_{C_1/2}(x))} 
 \leq e^{C(\lambda^{\frac{1}{2s}}+1)}\left\| y_{n+1}^{\frac{1-2s}{2}} w \right\|_{L^{2}(B_{2 C_1}^+(x))} \\
& \leq e^{C(\lambda^{\frac{1}{2s}}+1)}\left\| y_{n+1}^{\frac{1-2s}{2}} w \right\|_{L^{2}(B_{1/8}^+(x))} 
 \leq e^{C(\lambda^{\frac{1}{2s}}+1)}\left\|  w \right\|_{L^{2}(B_{1/4}(x))} \\
& \leq e^{C(\lambda^{\frac{1}{2s}}+1)}\left\|  w \right\|_{L^{\infty}(B_{1/4}(x))}.
\end{align*}
Here we first used Proposition \ref{prop:analyt}, followed by a variant of Corollary \ref{cor:variablefull} (which in combination with Morrey's inequality allows to control the boundary $L^{\infty}$ norm by the bulk $L^2$ norm), the bulk doubling property and last but not least estimate (\ref{eq:boundbulk}).
\end{proof}

In order to achieve the estimate on the Hausdorff measure of the nodal domains we use the following lemma relating the growth of a complex analytic function with the number of its zeros. This can for instance be found in \cite{BL14}:

\begin{lem}
\label{lem:compl}
Let $w: B_{1}\subset \C \rightarrow \C$ be an analytic function with
\begin{align*}
|w(0)|= 1 \mbox{ and } \left\| w \right\|_{L^{\infty}(B_1)} \leq 2^n,
\end{align*}
for $n\geq 0$.
Then
\begin{align*}
\#\{ z\in B_{\frac{1}{2}} \big| w(z)=0 \} \leq n.
\end{align*}
\end{lem}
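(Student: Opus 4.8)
The plan is to deduce this purely one-variable complex-analytic fact from Jensen's formula, which converts the growth bound $\|w\|_{L^\infty(B_1)}\le 2^n$ directly into a bound on the number of zeros. The only structural observation needed is that $w$ is analytic on the \emph{open} disc $B_1$, hence analytic on $\overline{B_r}$ for every $r<1$, so Jensen's formula is available on each such $B_r$ and we may let $r\to 1^-$ at the end.

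First I would note that since $|w(0)|=1$ the function $w$ is not identically zero, so its zero set has no accumulation point in $B_1$; in particular $w$ has only finitely many zeros in the compact set $\overline{B_{1/2}}\subset B_1$. Let $a_1,\dots,a_N$ be the zeros of $w$ lying in $B_{1/2}$, listed with multiplicity (it suffices to bound this count, since the number of zeros without multiplicity is no larger). Fix $r\in(1/2,1)$. Since $w(0)\ne 0$ and $w$ is analytic on $\overline{B_r}$, Jensen's formula gives
\[
0=\log|w(0)|=\sum_{w(a)=0,\ |a|<r}\log\frac{|a|}{r}+\frac{1}{2\pi}\int_0^{2\pi}\log|w(re^{i\theta})|\,d\theta,
\]
where the sum runs over all zeros of $w$ in $B_r$, counted with multiplicity.

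Next I would estimate the two terms on the right. Every summand $\log(|a|/r)$ is negative because $|a|<r$, so discarding the zeros in $B_r\setminus B_{1/2}$ only increases the sum; for the remaining zeros $a_1,\dots,a_N\in B_{1/2}$ one has $|a_j|/r< 1/(2r)$, whence $\sum_j\log(|a_j|/r)\le -N\log(2r)$. The boundary integral is at most $\log\|w\|_{L^\infty(B_r)}\le\log\|w\|_{L^\infty(B_1)}\le n\log 2$. Combining these with the left-hand side $0$ yields $N\log(2r)\le n\log 2$, and since $2r>1$ we get $N\le n\log 2/\log(2r)$. Letting $r\to 1^-$ with $N$ fixed gives $N\le n$, which is the assertion; when $n=0$ one already concludes $N\le 0$ for any $r$ close to $1$.

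The argument is entirely routine, so I do not anticipate a genuine obstacle; the only minor points to check are the hypotheses for Jensen's formula ($w(0)\ne 0$, which is immediate, and analyticity slightly beyond $\overline{B_r}$, which holds for every $r<1$) together with the harmless passage to the limit $r\to 1^-$.
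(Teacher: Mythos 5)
Your Jensen's formula argument is correct; the paper itself does not prove Lemma~\ref{lem:compl} but simply cites \cite{BL14}, where the same Jensen-formula computation is the underlying proof. The only unstated point is that Jensen's formula requires no zeros on $|z|=r$, but this is harmless since the zero set is discrete and you take $r\to 1^-$ anyway.
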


With this, we can finally prove the claim of Proposition \ref{prop:nodaldom}. We use the same strategy as Bellova and Lin \cite{BL14}.

\begin{proof}[Proof of Proposition \ref{prop:nodaldom}]
In order to prove the Proposition we combine Lemma \ref{lem:compl} with an integral geometry estimate as in \cite{BL14}. However, due to our strong Carleman estimates we can argue on scales of order one. Let $x_0\in B_{\frac{1}{2}}\subset \R^n$ be the point at which that supremum of $|w|$ in $B_{\frac{1}{2}}\subset \R^n$ is attained. Setting $\tilde{w}_{\theta}(z):= w(x_0 + z\theta)$, where $\theta \in S^{n-1}$, $z\in B_{1}(0)\subset \C$, Proposition \ref{prop:doubl2} and Lemma \ref{lem:compl} imply
\begin{align*}
&\#\{x\in B_{\frac{1}{4}}(0) \subset \R^{n} \big| x- x_0 \mbox{ is parallel to } \theta \mbox{ and } w(x)=0  \} \\
& \leq  \#\{z\in B_{\frac{1}{2}}(0) \subset \C \big| \tilde{w}_{\theta}(z)=0  \} \leq C(\lambda^{\frac{1}{2s}} +1).
\end{align*}
Thus, via an integral geometry estimate \cite{HL10}, we infer
\begin{align*}
\mathcal{H}^{n-1}(\{x \in B_{\frac{1}{2}}(x_0) \big| w(x)=0\}) \lesssim \int\limits_{S^{n-1}} C(\lambda^{\frac{1}{2s}} +1) d\theta \lesssim C(\lambda^{\frac{1}{2s}} +1).
\end{align*}

\end{proof}

\section{Appendix}
In this final section we provide (sketches of) proofs of the required regularity statements. We begin by proving inhomogeneous bulk and boundary estimates and then using these to conclude the necessary analyticity estimates which are needed for the results on the $\mathcal{H}^{n-1}$ measure of the nodal domains of generalized eigenfunctions of the fractional Laplacian.

\subsection{Interior Regularity Estimates}
 
\begin{lem}[Global inhomogeneous estimate]
\label{lem:reg1}
Let $s\in (0,1)$,  $f\in \mathcal{S}\cap L^{2}(y_{n+1}^{\frac{2s-1}{2}}, \R^{n+1}_+)$ and $u$ be a solution of 
\begin{align*}
\nabla \cdot y_{n+1}^{1-2s}\nabla u & = f \mbox{ in } \R^{n+1}_{+},\\
\lim\limits_{y_{n+1}\rightarrow 0} y_{n+1}^{1-2s} \p_{n+1} u & = 0 \mbox{ on } \R^n \times \{0\}.
\end{align*}
Then we have
\begin{align*}
\left\| y_{n+1}^{\frac{1-2s}{2}} D^{2} u \right\|_{L^2(\R^{n+1}_{+})}^2 \lesssim \left\| y_{n+1}^{\frac{2s-1}{2}} f \right\|_{L^2(\R^{n+1}_{+})}^2.
\end{align*}
\end{lem}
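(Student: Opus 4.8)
The plan is to diagonalize the operator by a partial Fourier transform in the tangential variables $x\in\R^{n}$, reducing the problem to a family of ordinary differential equations in $t:=y_{n+1}$ parametrized by the frequency $\xi$, and then to prove the corresponding weighted one-dimensional estimate uniformly in $\xi$. Writing $\hat u(\xi,t)$ for the tangential Fourier transform and $\rho:=|\xi|$, the equation becomes
\begin{equation*}
\p_t\bigl(t^{1-2s}\p_t\hat u\bigr)-\rho^2 t^{1-2s}\hat u=\hat f(\xi,t),\qquad \lim_{t\to 0}t^{1-2s}\p_t\hat u=0,
\end{equation*}
and by Plancherel it suffices to bound $\int_0^\infty t^{1-2s}\bigl(\rho^4|\hat u|^2+\rho^2|\p_t\hat u|^2+|\p_t^2\hat u|^2\bigr)\,dt$ by $C\int_0^\infty t^{2s-1}|\hat f|^2\,dt$ with $C$ independent of $\xi$, and then to integrate in $\xi$; the factors $\rho^2\hat u$ and $\rho\,\p_t\hat u$ are precisely the Fourier symbols of $\nabla'\nabla' u$ and $\nabla'\p_{n+1}u$. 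As usual, by a density argument — legitimate because $f\in\mathcal S$ — we may assume $u$ smooth and decaying (the relevant $u$ is the one in the natural energy space), so that all integrations by parts are justified and boundary terms at infinity vanish; the measure-zero frequency set $\{\xi=0\}$, where $\hat u$ need not be $t$-integrable, is discarded.

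\textbf{The three ingredients.} First, pairing the ODE with $\bar{\hat u}$ and integrating by parts gives
\begin{equation*}
\int_0^\infty t^{1-2s}|\p_t\hat u|^2\,dt+\rho^2\int_0^\infty t^{1-2s}|\hat u|^2\,dt=-\int_0^\infty\hat f\,\bar{\hat u}\,dt\le\Bigl\|t^{\frac{2s-1}{2}}\hat f\Bigr\|_{L^2}\Bigl\|t^{\frac{1-2s}{2}}\hat u\Bigr\|_{L^2},
\end{equation*}
the boundary term at $t=0$ vanishing by the Neumann condition; this yields at once $\rho^2\|t^{\frac{1-2s}{2}}\hat u\|_{L^2}\le\|t^{\frac{2s-1}{2}}\hat f\|_{L^2}$ and $\rho^2\|t^{\frac{1-2s}{2}}\p_t\hat u\|_{L^2}^2\le\|t^{\frac{2s-1}{2}}\hat f\|_{L^2}^2$, i.e. the tangential--tangential and mixed second-derivative bounds. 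Second, solving for $\p_t\hat u$ with the integrating factor $t^{1-2s}$ and using the Neumann condition gives $t^{1-2s}\p_t\hat u(t)=\int_0^t\bigl(\hat f(\tau)+\rho^2\tau^{1-2s}\hat u(\tau)\bigr)\,d\tau$, whence
\begin{equation*}
\int_0^\infty t^{-1-2s}|\p_t\hat u|^2\,dt=\int_0^\infty t^{2s-3}\Bigl(\int_0^t\bigl(\hat f+\rho^2\tau^{1-2s}\hat u\bigr)\,d\tau\Bigr)^2 dt\lesssim\int_0^\infty\tau^{2s-1}\bigl(|\hat f|^2+\rho^4\tau^{2-4s}|\hat u|^2\bigr)\,d\tau
\end{equation*}
by the one-dimensional Hardy inequality with weight exponent $3-2s>1$; the last term equals $\rho^4\int_0^\infty\tau^{1-2s}|\hat u|^2\,d\tau$, controlled by the first ingredient. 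Finally, rewriting the equation as $t^{\frac{1-2s}{2}}\p_t^2\hat u=t^{\frac{2s-1}{2}}\hat f+\rho^2 t^{\frac{1-2s}{2}}\hat u-(1-2s)t^{\frac{-1-2s}{2}}\p_t\hat u$ and combining the three bounds gives $\|t^{\frac{1-2s}{2}}\p_t^2\hat u\|_{L^2}\lesssim\|t^{\frac{2s-1}{2}}\hat f\|_{L^2}$. Integrating the three estimates in $\xi$ and invoking Plancherel recovers $\|y_{n+1}^{\frac{1-2s}{2}}D^2u\|_{L^2(\R^{n+1}_+)}\lesssim\|y_{n+1}^{\frac{2s-1}{2}}f\|_{L^2(\R^{n+1}_+)}$.

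\textbf{Main obstacle.} The genuinely delicate point is the normal--normal derivative $\p_{n+1}^2 u$: solving the equation for it produces the more singular term $y_{n+1}^{-2s}\p_{n+1}u$, which cannot be absorbed naively. It is tamed by the Hardy inequality above, which crucially uses the Neumann boundary condition (to fix the constant of integration and kill the boundary term) together with the range $s\in(0,1)$, which is exactly what makes the relevant Hardy exponent $3-2s$ exceed $1$. Everything else — the energy identity, the density/decay reductions, and discarding $\{\xi=0\}$ — is routine.
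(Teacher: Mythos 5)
Your proof is correct and follows the same structural route as the paper: tangential Fourier transform, energy estimate pairing with $\bar{\hat u}$ to get the tangential--tangential and mixed second-derivative bounds, and then solving for $\p_{n+1}\hat u$ via the integrating factor with the Neumann condition to control the singular term $y_{n+1}^{-2s}\p_{n+1}\hat u$, which is indeed the genuine obstruction. The only substantive difference is in the last one-dimensional step: where you apply the classical Hardy inequality (valid because $3-2s>1$, i.e.\ $s<1$) directly to the Volterra-type integral representation, the paper first rescales $z=|\xi|y_{n+1}$ to remove the parameter from the ODE, writes down the fundamental solution $\tilde k(z,y)=y^{\frac{1-2s}{2}}z^{2s-1}\mathbf 1_{y<z}$ for the first-order equation in $\tilde w'$, establishes weighted $L^\infty$--$L^\infty$ and $L^1$--$L^1$ bounds by Schur's test, and interpolates to get the weighted $L^2$ bound. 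The two are essentially the same estimate on the same kernel; your Hardy argument is the more economical route, avoiding both the rescaling and the interpolation step, while the paper's rescaling makes the ODE parameter-free and thus makes the eventual $\xi$-uniformity manifest. Both are correct and complete.
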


\begin{proof}
\emph{Step 1: Tangential transformation and conversion into an ODE.}
We carry out a tangential Fourier transform and interpret the PDE as an ODE in $z= |\xi|y_{n+1}$. Setting $\tilde{w}(z,\xi)=|\xi|^{2s+1} u(z,\xi)$ it evolves according to
\begin{align*}
z^{1-2s} \tilde{w}'' + (1-2s)z^{-2s}\tilde{w}' - z^{1-2s}\tilde{w} & = \hat{f} \mbox{ on } (0,\infty),\\
\lim\limits_{z\rightarrow 0}z^{1-2s} \tilde{w}' & = 0.
\end{align*}

\emph{Step 2: Energy estimates.} Formally testing the resulting ODE with $\tilde{w}$ yields
\begin{align*}
\int\limits_{0}^{\infty} z^{1-2s} \tilde{w}'^2 dz + \int\limits_{0}^{\infty} z^{1-2s} \tilde{w}^2 dz
= - \int\limits_{0}^{\infty} f \tilde{w} dz.
\end{align*}
Applying Cauchy-Schwarz and Young's inequality thus implies
\begin{align*}
\left\| z^{\frac{1-2s}{2}} \tilde{w}' \right\|_{L^2(0,\infty)} + \left\| z^{\frac{1-2s}{2}} \tilde{w}  \right\|_{L^2(0,\infty)}
\lesssim \left\| z^{\frac{2s-1}{2}} f \right\|_{L^2(0,\infty)}.
\end{align*}

\emph{Step 3: Reduction of the ODE.}
As a consequence of the energy estimates, we may interpret our original second order ODE as a first order ODE for $\tilde{w}'$:
\begin{align*}
z^{\frac{1-2s}{2}}\tilde{w}'' + (1-2s)z^{- \frac{1+2s}{2}}\tilde{w}' = z^{\frac{2s-1}{2}} \hat{f} + z^{\frac{1-2s}{2}}\tilde{w} =: \tilde{f}.
\end{align*}
Setting $v=\tilde{w}'$, the resulting first order ode can be solved via its fundamental solution:
\begin{align*}
\tilde{k}(z,y) = \left\{ 
\begin{array}{ll}
y^{\frac{1-2s}{2}}z^{2s-1} & z>y>0,\\
0 & \mbox{else}.
\end{array} \right.
\end{align*}
From this it is possible to obtain weighted $L^{\infty}-L^{\infty}$ and $L^{1}-L^{1}$ estimates:
Using Schur's Lemma, we observe
\begin{align*}
\sup\limits_{z\in (0,\infty)} \int\limits_{(0,\infty)}\left( \frac{z}{y} \right)^{-\delta} z^{- \frac{2s+1}{2}} |\tilde{k}(z,y)|dy = \sup\limits_{z} z^{-1 + \frac{2s-1}{2}-\delta} \int\limits_{0}^{z}y^{ \frac{1-2s}{2}+\delta} dy = const,
\end{align*}
if $s\in (0,1)$ and $\delta > \frac{2s-1}{2} -1$. Therefore, $\left\| z^{-\frac{ 2s+1}{2}-\delta} v \right\|_{L^{\infty}} \lesssim \left\| z^{\delta}\tilde{f} \right\|_{L^{\infty}} $. Similarly,
\begin{align*}
\sup\limits_{y\in(0,\infty)} \int\limits_{(0,\infty)}\left( \frac{z}{y} \right)^{\delta}  z^{- \frac{1+2s}{2}} |\tilde{k}(z,y)|d z = \sup\limits_{y}y^{-\delta} y^{\frac{1-2s}{2}} \int\limits_{y}^{\infty} z^{-1 + \frac{2s-1}{2}+\delta}dz  = const,
\end{align*}
if $s\in(0,1)$ and $\delta < \frac{1-2s}{2}$. This implies $\left\| z^{- \frac{1+2s}{2}+\delta} v\right\|_{L^{1}} \lesssim \left\| z^{-\delta} \tilde{f} \right\|_{L^{1}}$. An admissible choice would, for example, be $\delta= \frac{1-2s}{2}-\epsilon$ with $0<\epsilon=\epsilon(s)\ll 1$. Then interpolation yields the desired weighted $L^{2}$ estimate:
\begin{align}
\label{eq:grad2}
\left\| z^{- \frac{1+2s}{2}}v \right\|_{L^2}^2 \lesssim \left\| \tilde{f}\right\|_{L^2}^2.
\end{align}
Recalling $v= \hat{w}'$, this allows to control the second non-tangential derivatives:
\begin{align*}
z^{\frac{1-2s}{2}}\tilde{w}'' =  z^{\frac{2s-1}{2}}\hat{f} + z^{\frac{1-2s}{2}}\tilde{w} - (1-2s)z^{- \frac{1+2s}{2}}\tilde{w}'.
\end{align*}
Therefore,
\begin{align*}
\left\| z^{\frac{1-2s}{2}} \tilde{w}'' \right\|_{L^2}^2 + \left\| z^{\frac{1-2s}{2}} \tilde{w} \right\|_{L^2}^2 \lesssim \left\| z^{\frac{2s-1}{2}} \hat{f} \right\|_{L^2}^2.
\end{align*}
Finally, rescaling yields the desired estimate
\begin{align*}
\left\| y_{n+1}^{\frac{1-2s}{2}} \p_{n+1}^2 \hat{u} \right\|_{L^2}^2 + \left\| y_{n+1}^{\frac{1-2s}{2}}|\xi|^2 \hat{u} \right\|_{L^2}^2 \lesssim \left\| y_{n+1}^\frac{2s-1}{2} \hat{f} \right\|_{L^2}^2.
\end{align*}
\end{proof}

\begin{lem}[Localization of the inhomogeneous estimate]
\label{lem:reg2}
Let $s\in (0,1)$,  $f\in \mathcal{S}\cap L^{2}(y_{n+1}^{\frac{2s-1}{2}}, B_{1}^+)$ and $u$ be a solution of 
\begin{align*}
\nabla \cdot y_{n+1}^{1-2s}\nabla u & = f \mbox{ in } B_{1}^{+},\\
\lim\limits_{y_{n+1}\rightarrow 0} y_{n+1}^{1-2s} \p_{n+1} u & = 0 \mbox{ on } B_{1} \times \{0\}.
\end{align*}
Then we have
\begin{align*}
\left\| y_{n+1}^{\frac{1-2s}{2}} D^{2} u \right\|_{L^2(B_{\frac{1}{2}}^{+})}^2 \lesssim \left\| y_{n+1}^{\frac{2s-1}{2}} f \right\|_{L^2(B_{\frac{1}{2}}^{+})}^2 + \left\| y_{n+1}^{\frac{1-2s}{2}} u \right\|_{L^2(B_{1}^{+})}^2.
\end{align*}
\end{lem}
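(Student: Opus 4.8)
The plan is to run the classical localization scheme for degenerate-elliptic interior regularity: combine a weighted Caccioppoli estimate for $u$ with the global estimate of Lemma \ref{lem:reg1}, applied to a cut-off version of $u$. First I would pick a smooth \emph{radial} cut-off $\eta$ with $\eta\equiv 1$ on $B_{1/2}^+$ and $\supp(\eta)\subset B_{3/4}^+$, and set $v:=\eta u$, so that
\[
\nabla\cdot y_{n+1}^{1-2s}\nabla v = \eta f + 2y_{n+1}^{1-2s}\nabla\eta\cdot\nabla u + u\,\nabla\cdot\bigl(y_{n+1}^{1-2s}\nabla\eta\bigr) =: \tilde f \quad\text{on } \R^{n+1}_+ .
\]
Since $\eta$ is radial, $\p_{n+1}\eta$ vanishes to first order at $\{y_{n+1}=0\}$, whence $\lim_{y_{n+1}\to 0}y_{n+1}^{1-2s}\p_{n+1}v=\lim_{y_{n+1}\to 0}y_{n+1}^{1-2s}\eta\,\p_{n+1}u=0$; thus $v$ inherits the homogeneous Neumann condition and Lemma \ref{lem:reg1} gives $\|y_{n+1}^{\frac{1-2s}{2}}D^2 v\|_{L^2(\R^{n+1}_+)}^2\lesssim\|y_{n+1}^{\frac{2s-1}{2}}\tilde f\|_{L^2(\R^{n+1}_+)}^2$. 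Restricting the left-hand side to $B_{1/2}^+$, where $v\equiv u$, produces the Hessian of $u$.

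It then remains to estimate $\tilde f$. The contribution of $\eta f$ is controlled by $\|y_{n+1}^{\frac{2s-1}{2}}f\|_{L^2(B_{3/4}^+)}$. The two commutator terms are supported in the annulus $\{1/2<|y|<3/4\}$; the gradient term is $\lesssim\|y_{n+1}^{\frac{1-2s}{2}}\nabla u\|_{L^2(B_{3/4}^+)}$, while, after expanding $\nabla\cdot(y_{n+1}^{1-2s}\nabla\eta)=y_{n+1}^{1-2s}\Delta\eta+(1-2s)y_{n+1}^{-2s}\p_{n+1}\eta$, the radial choice of $\eta$ forces $|y_{n+1}^{-2s}\p_{n+1}\eta|\lesssim y_{n+1}^{1-2s}$ on the support, so the last term is $\lesssim\|y_{n+1}^{\frac{1-2s}{2}}u\|_{L^2(B_{3/4}^+)}$. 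Finally I would absorb the weighted gradient through a Caccioppoli inequality: testing $\nabla\cdot y_{n+1}^{1-2s}\nabla u=f$ against $\eta_1^2 u$ with a radial cut-off $\eta_1\equiv 1$ on $B_{3/4}^+$, $\supp(\eta_1)\subset B_1^+$, the boundary term $\int_{B_1}\eta_1^2 u\lim_{y_{n+1}\to 0}y_{n+1}^{1-2s}\p_{n+1}u\,dx$ drops out by the Neumann condition, and Young's inequality (pairing $f$ with the weight $y_{n+1}^{\frac{2s-1}{2}}$ and $u$ with $y_{n+1}^{\frac{1-2s}{2}}$) yields $\|y_{n+1}^{\frac{1-2s}{2}}\nabla u\|_{L^2(B_{3/4}^+)}^2\lesssim\|y_{n+1}^{\frac{2s-1}{2}}f\|_{L^2(B_1^+)}^2+\|y_{n+1}^{\frac{1-2s}{2}}u\|_{L^2(B_1^+)}^2$. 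Chaining everything gives the assertion, with the $f$-norm naturally appearing on a ball slightly larger than $B_{1/2}^+$; this (and the exact intermediate radii) only affects the implicit constant and is immaterial in the sequel.

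Before invoking Lemma \ref{lem:reg1} one should note that $u$ is a priori only of $H^1$-type, so $\tilde f$ need not belong to $\mathcal S$; as usual I would first prove the estimate as an a priori bound for smooth solutions — or, equivalently, establish interior $y_{n+1}^{\frac{1-2s}{2}}$-weighted $H^2$ regularity via tangential difference quotients — and then pass to the general case by density. I expect the only real obstacle to be the interplay between the cut-off and the degenerate weight: the cut-off must be radial so that the homogeneous Neumann condition is preserved, and it is precisely this radial choice that simultaneously tames the borderline term $y_{n+1}^{-2s}\p_{n+1}\eta$ so that it carries the weight $y_{n+1}^{1-2s}$ instead of a more singular power. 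Everything beyond that is the classical Caccioppoli-plus-global-estimate argument.
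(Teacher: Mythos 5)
Your proposal is correct and is exactly the argument the paper has in mind: the paper's proof of this lemma is a single sentence stating that it follows from Lemma \ref{lem:reg1} after localizing with a radial cut-off and invoking Caccioppoli's inequality, which is precisely what you carry out. The technical observations you highlight (that a radial cut-off is needed both to preserve the homogeneous Neumann condition and to tame the borderline commutator term $y_{n+1}^{-2s}\p_{n+1}\eta$, which becomes $\sim y_{n+1}^{1-2s}\eta'(|y|)/|y|$ on the annulus where $|y|$ is bounded below) are the genuinely important points, and you get them right; so is the remark that one should argue by density since $\tilde f$ is not a priori Schwartz. Your parenthetical observation that the $f$-norm on the right naturally appears on a ball strictly larger than $B_{1/2}^+$ is also accurate: the localization plus Caccioppoli argument delivers $\|y_{n+1}^{\frac{2s-1}{2}}f\|_{L^2(B_1^+)}$ rather than $\|y_{n+1}^{\frac{2s-1}{2}}f\|_{L^2(B_{1/2}^+)}$, so the stated radius in the lemma appears to be a minor slip; this does not affect any subsequent use of the result in the paper.
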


\begin{proof}
This follows from Lemma \ref{lem:reg1} after localizing with a radial cut-off function and using Caccioppoli's inequality. 
\end{proof}

\subsection{Boundary Regularity Estimates}

In this section we prove a boundary estimate which allows to make efficient use of the interpolation inequality in the proof of Proposition \ref{prop:bdoubl}

\begin{lem}[Boundary estimate]
\label{lem:boundary}
Let $w: \R^{n+1}_+ \rightarrow \R$ be a solution of
\begin{equation}
\label{eq:pde}
\begin{split}
\p_{n+1}y_{n+1}^{1-2s}\p_{n+1}w + y_{n+1}^{1-2s} \D w &= 0 \mbox{ on } \R^{n+1}_+,\\
\lim\limits_{y_{n+1}\rightarrow 0} y_{n+1}^{1-2s} \p_{n+1} w &= g \mbox{ on } \R^n.
\end{split}
\end{equation}
Then we have:
\begin{align*}
\left\| y_{n+1}^{\frac{1-2s}{2}} \nabla \nabla' w \right\|_{L^2(\R^{n+1}_+)} \lesssim \left\| g \right\|_{\dot{H}^{1-s}(\R^{n})}
\end{align*}
\end{lem}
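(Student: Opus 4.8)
The plan is to use that the Caffarelli--Silvestre operator $L_s := \nabla\cdot(y_{n+1}^{1-2s}\nabla\,\cdot\,)$ has constant coefficients in the tangential variables, so that each tangential derivative of a solution is again a solution, and then to run a weighted energy identity. By density it suffices to treat $g\in\mathcal{S}(\R^n)$, for which the associated extension $w$ and all its derivatives decay rapidly at infinity; this legitimizes the integrations by parts below, and all constants are independent of $g$. Here $\|g\|_{\dot H^{1-s}(\R^n)}^2 = \int_{\R^n}|\xi|^{2(1-s)}|\hat g(\xi)|^2\,d\xi$.

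First I would put $v:=\nabla' w$, an $\R^n$-valued function. Differentiating the interior equation and the flux condition $\lim_{y_{n+1}\to0}y_{n+1}^{1-2s}\p_{n+1}w=g$ in the tangential variables --- which is licit since, by the interior regularity of Lemmas \ref{lem:reg1} and \ref{lem:reg2}, the flux commutes with $\nabla'$ --- one gets
\begin{align*}
\nabla\cdot(y_{n+1}^{1-2s}\nabla v)=0 \text{ in } \R^{n+1}_+, \qquad \lim_{y_{n+1}\to0}y_{n+1}^{1-2s}\p_{n+1}v=\nabla'g \text{ on } \R^n.
\end{align*}
Pairing the first equation with $v$ and integrating by parts over $\R^{n+1}_+$ (the boundary term at infinity vanishes by decay; the one on $\{y_{n+1}=0\}$ produces the Neumann flux, the outward normal there being $-e_{n+1}$) yields the identity
\begin{align*}
\left\| y_{n+1}^{\frac{1-2s}{2}}\nabla\nabla' w\right\|_{L^2(\R^{n+1}_+)}^2 = \int_{\R^{n+1}_+}y_{n+1}^{1-2s}|\nabla v|^2\,dy = -\int_{\R^n}\nabla'g(x)\cdot\nabla'w(x,0)\,dx .
\end{align*}

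It remains to identify the Dirichlet trace $w(\cdot,0)$. Taking the tangential Fourier transform reduces the extension problem to the Bessel-type ODE $\hat w''+\tfrac{1-2s}{y_{n+1}}\hat w'-|\xi|^2\hat w=0$, exactly as in the proof of Lemma \ref{lem:reg1}; its solution decaying as $y_{n+1}\to\infty$ is $\hat w(\xi,y_{n+1})=\widehat{w(\xi,0)}\,\varphi(|\xi|y_{n+1})$ with $\varphi(z)=c_{n,s}z^sK_s(z)$, $\varphi(0)=1$ and $\lim_{z\to0}z^{1-2s}\varphi'(z)=-c_s^{-1}$, whence the classical Neumann-to-Dirichlet relation $\widehat{w(\cdot,0)}(\xi)=-c_s|\xi|^{-2s}\hat g(\xi)$, with $c_s>0$ the Caffarelli--Silvestre constant. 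Plancherel's identity then turns the right-hand side above into
\begin{align*}
-\int_{\R^n}\nabla'g\cdot\nabla'w(\cdot,0)\,dx = c_s\int_{\R^n}|\xi|^{2-2s}|\hat g(\xi)|^2\,d\xi = c_s\,\|g\|_{\dot H^{1-s}(\R^n)}^2 ,
\end{align*}
which gives the claim (in fact with equality up to the constant $c_s$).

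The one point needing care is this inverse Dirichlet-to-Neumann identity, i.e.\ that the Neumann-to-Dirichlet map of $L_s$ is the Fourier multiplier $c_s|\xi|^{-2s}$; this is standard, but one should spell out the normalization of $\varphi$ near $z=0$. If one prefers to bypass the energy identity, the same bound follows by inserting the representation $\hat w(\xi,y_{n+1})=-c_s|\xi|^{-2s}\hat g(\xi)\,\varphi(|\xi|y_{n+1})$ directly into $\|y_{n+1}^{(1-2s)/2}\nabla\nabla' w\|_{L^2}^2$, substituting $z=|\xi|y_{n+1}$, and observing that $\int_0^\infty z^{1-2s}\big(|\varphi(z)|^2+|\varphi'(z)|^2\big)\,dz<\infty$ for $s\in(0,1)$ --- near $z=0$ one has $\varphi\sim 1$ and $\varphi'\sim z^{2s-1}$, both square-integrable against $z^{1-2s}$ precisely because $0<s<1$, while $\varphi,\varphi'$ decay exponentially at infinity --- so that only the factor $\int_{\R^n}|\xi|^{2(1-s)}|\hat g|^2\,d\xi$ remains.
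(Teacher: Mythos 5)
Your proof is correct, and your primary argument takes a genuinely different route from the paper. The paper works entirely on the Fourier side: it writes $\hat w(\xi, y_{n+1}) = C|\xi|^{-2s}\hat g(\xi)\,\varphi(|\xi|y_{n+1})$ with $\varphi(z) = z^s K_s(z)$, then substitutes directly into $\|y_{n+1}^{(1-2s)/2}\nabla\nabla'w\|_{L^2}^2$ and checks integrability of $z^{1-2s}(|\varphi|^2 + |\varphi'|^2)$ --- this is exactly what you sketch in your closing paragraph as the ``bypass.'' Your main line instead differentiates the equation tangentially, runs an energy identity for $v=\nabla'w$, and reduces the left-hand side to the boundary pairing $-\int_{\R^n}\nabla'g\cdot\nabla'w(\cdot,0)\,dx$; the only Bessel input is then the Neumann-to-Dirichlet relation $\widehat{w(\cdot,0)}=-c_s|\xi|^{-2s}\hat g$. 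The energy route is cleaner in that it replaces the two Bessel integrability computations (for $\varphi$ and $\varphi'$) by a single scalar normalization of $\varphi$ at the origin, and it delivers the bound as an actual equality up to $c_s$; what it buys you is more structure, what it costs is the justification of the integration by parts and the trace $\nabla'w(\cdot,0)\in L^2$, which you correctly handle by density in $\mathcal S$. One small point worth spelling out when you write this up: the constant $c>0$ and the sign in $\hat w(\cdot,0)=-c_s|\xi|^{-2s}\hat g$ are tied together through $\Gamma(-s)<0$; the minus sign is essential for the right-hand side of your energy identity to come out nonnegative, so it deserves a line of verification rather than a parenthetical remark.
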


\begin{proof}

Carrying out a Fourier transform in the tangential direction, transforms the PDE (\ref{eq:pde}) into the following ODE in $y_{n+1}$:
\begin{align*}
y_{n+1}^{1-2s}\hat{w}'' + (1-2s)y_{n+1}^{-2s} \hat{w}' - y_{n+1}^{1-2s}|\xi|^2 \hat{w} &= 0,\\
-\lim\limits_{y_{n+1}\rightarrow 0}y_{n+1}^{1-2s}\hat{w}' & = \hat{g}. 
\end{align*}
Rescaling ($\hat{w}(y_{n+1},\xi) = u(|\xi|y_{n+1},\xi)$, $v = y_{n+1}^{-s}u$, $z=y_{n+1}|\xi|$) leads to a modified Bessel equation
\begin{align*}
z^2v'' + zv' - (z^2 + s^2)v = 0.
\end{align*}
Its two linear independent solutions are given by the modified Bessel functions $I_{s}$ and $K_{s}$ which have the following asymptotics in $0$ and $\infty$ (c.f. \cite{OLBC}):
\begin{align*}
&I_{s}(z) \sim z^{s} \mbox{ as } z \rightarrow 0, \ \ I_{s}(z) \sim z^{-\frac{1}{2}}e^{z} \mbox{ as } z\rightarrow \infty,\\
&K_{s}(z) \sim z^{-s} \mbox{ as } z \rightarrow 0, \ \ K_{s}(z) \sim z^{-\frac{1}{2}}e^{-z} \mbox{ as } z\rightarrow \infty.
\end{align*}
In order to obtain integrability at infinity, we set 
\begin{align*}
\hat{u}(z,\xi)= C(\xi)z^{s}K_{s}(z).
\end{align*}
Taking the boundary conditions and the asymptotics $(z^{s}K_{s}(z))' \sim z^{2s-1}$ as $z\rightarrow 0$ into account, implies
\begin{align*}
\hat{u}(z,\xi) = C|\xi|^{-2s}\hat{g}(\xi)z^{s}K_{s}(z).
\end{align*}
For notational convenience we define $\va(z) = z^{s}K_{s}(z)$. With this we can prove the desired estimate:
\begin{align*}
\left\| y_{n+1}^{\frac{1-2s}{2}}  \nabla \nabla' w \right\|_{L^2(\R^{n+1}_+)}
&\leq  \left\| y_{n+1}^{\frac{1-2s}{2}}  \nabla' \p_{n+1} w \right\|_{L^2(\R^{n+1}_+)} + \left\| y_{n+1}^{\frac{1-2s}{2}}  \nabla'^2 w \right\|_{L^2(\R^{n+1}_+)}\\
& \leq  \left\| z^{\frac{1-2s}{2}} |\xi|^{s+1} \p_z \hat{u} \right\|_{L^2(\R^{n+1}_+)}\\
& \quad \quad + \left\| z^{\frac{1-2s}{2}} |\xi|^{s+1}  \hat{u} \right\|_{L^2(\R^{n+1}_+)}\\
& \leq  \left\|  |\xi|^{-s+1} \hat{g}  \right\|_{L^2(\R^{n})}\left( \left\| z^{\frac{1-2s}{2}} \va' \right\|_{L^2(\R_+)} + \left\| z^{\frac{1-2s}{2}} \va \right\|_{L^2(\R_+)}  \right)\\
& \lesssim \left\| g\right\|_{\dot{H}^{1-s}(\R^n)},
\end{align*}
as the asymptotics of $\va$ allow to integrate the $\R_+$ integral.
\end{proof}

We point out that it is possible to localize this estimate and via perturbation arguments introduce variable coefficients in the tangential direction as well as to add lower order contributions:

\begin{lem}[Localization of the boundary estimate]
\label{lem:boundary1}
Let $a^{ij}: \R^{n} \rightarrow \R^{n\times n}$ be a uniformly elliptic, smooth tensor field depending only on the tangential variables and suppose that $b,c$ are smooth. Let $w: \R^{n+1}_+ \rightarrow \R$ be a solution of
\begin{align*}
\p_{n+1}y_{n+1}^{1-2s}\p_{n+1} + y_{n+1}^{1-2s} \p_i a^{ij}\p_j w + y_{n+1}^{1-2s} b \cdot \nabla w + y_{n+1}^{1-2s}c w &= 0 \mbox{ on } \R^{n+1}_+,\\
\lim\limits_{y_{n+1}\rightarrow 0} y_{n+1}^{1-2s} \p_{n+1} w &= g \mbox{ on } \R^n.
\end{align*}
Then we have:
\begin{align*}
\left\| y_{n+1}^{\frac{1-2s}{2}} \nabla \nabla' w \right\|_{L^2(B^+_{\frac{1}{2}})} \lesssim \epsilon^{\frac{1}{1-s}} \left\| g \right\|_{\dot{H}^{1}(B_1)} +  \epsilon^{- \frac{1}{s}} \left\| g \right\|_{L^2(B_1)} + \left\| y_{n+1}^{1-2s} w \right\|_{L^{2}(B_{1}^+)}
\end{align*}
\end{lem}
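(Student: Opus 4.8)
The plan is to deduce the estimate from the global constant-coefficient boundary estimate of Lemma~\ref{lem:boundary}, combined with the interior regularity estimates of Lemmas~\ref{lem:reg1}--\ref{lem:reg2}, by a localization-and-perturbation argument, and then to split the $\dot{H}^{1-s}$ norm of the Neumann datum into its $\dot{H}^1$ and $L^2$ contributions by interpolation.

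First I would localize and freeze the coefficients. I pick a cut-off $\eta=\eta(|y|)$ which equals one on $B_{1/2}^+$ and vanishes outside $B_{3/4}^+$; since $\eta$ depends on $|y|$ only, one has $y_{n+1}^{1-2s}\p_{n+1}\eta=O(y_{n+1}^{2-2s})\to 0$ as $y_{n+1}\to 0$, so the Neumann trace of $\eta w$ is exactly $\eta g$. After an affine change of the tangential variables normalizing $a^{ij}(0)$ to the identity --- which leaves the factor $\p_{n+1}y_{n+1}^{1-2s}\p_{n+1}$ untouched --- the function $\eta w$ solves $\p_{n+1}y_{n+1}^{1-2s}\p_{n+1}(\eta w)+y_{n+1}^{1-2s}\Delta'(\eta w)=f$ in $\R^{n+1}_+$ with Neumann datum $\eta g$ on $\R^n$, where $f$ collects the commutator of the leading operator with $\eta$ (lower order in $w$ and supported in $B_{3/4}^+\setminus B_{1/2}^+$), the frozen-coefficient error $y_{n+1}^{1-2s}\p_i(a^{ij}-a^{ij}(0))\p_j(\eta w)$, and the lower order terms $y_{n+1}^{1-2s}b\cdot\nabla(\eta w)+y_{n+1}^{1-2s}c\,\eta w$. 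Here it is essential that $a^{ij}$ depends only on the tangential variables, so that the structure in $y_{n+1}$ is undisturbed and Lemma~\ref{lem:boundary} applies; combining it with the interior estimates of Lemmas~\ref{lem:reg1}--\ref{lem:reg2} by superposition gives
\begin{align*}
\left\| y_{n+1}^{\frac{1-2s}{2}}\nabla\nabla'(\eta w)\right\|_{L^2(\R^{n+1}_+)}\lesssim \left\|\eta g\right\|_{\dot{H}^{1-s}(\R^n)}+\left\| y_{n+1}^{\frac{2s-1}{2}}f\right\|_{L^2(\R^{n+1}_+)}.
\end{align*}

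Next I would absorb the errors. In $\|y_{n+1}^{(2s-1)/2}f\|_{L^2}$ the only contribution carrying two tangential derivatives of $\eta w$ is bounded by $\|a^{ij}-a^{ij}(0)\|_{L^\infty(B_{3/4})}\,\|y_{n+1}^{(1-2s)/2}\nabla\nabla'(\eta w)\|_{L^2}$, which, after covering $B_{1/2}$ by finitely many balls of radius so small that this $L^\infty$ norm is small and then rescaling, can be absorbed into the left-hand side. Every remaining term in $f$ is at most first order in $w$, so by Caccioppoli's inequality and Lemma~\ref{lem:reg2} it is controlled by $\|y_{n+1}^{1-2s}w\|_{L^2(B_1^+)}$ together with a trace contribution $\|w\|_{L^2(B_1)}\|g\|_{L^2(B_1)}$ (the trace being controlled by the Hardy--trace inequality); this leaves the estimate with $\|g\|_{\dot{H}^{1-s}(B_1)}$ in place of the two boundary norms of the statement. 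Finally, since $\dot{H}^{1-s}(\R^n)=[\dot{H}^1(\R^n),L^2(\R^n)]_s$, Hölder's inequality on the Fourier side gives $\|g\|_{\dot{H}^{1-s}}\le\|g\|_{\dot{H}^1}^{1-s}\|g\|_{L^2}^s$; writing the right-hand side as $(\epsilon^{\frac{1}{1-s}}\|g\|_{\dot{H}^1})^{1-s}(\epsilon^{-\frac1s}\|g\|_{L^2})^s$ and applying Young's inequality with exponents $\tfrac{1}{1-s}$ and $\tfrac1s$ yields $\|g\|_{\dot{H}^{1-s}}\lesssim\epsilon^{\frac{1}{1-s}}\|g\|_{\dot{H}^1(B_1)}+\epsilon^{-\frac1s}\|g\|_{L^2(B_1)}$ for every $0<\epsilon\le 1$, which is the claimed bound (the cross term $\epsilon^{1/(1-s)}\|g\|_{L^2}$ produced by passing the cut-off through the $\dot{H}^1$ norm is absorbed into $\epsilon^{-1/s}\|g\|_{L^2}$ since $\epsilon\le 1$).

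I expect the main obstacle to be the perturbation step: one has to make sure that passing from the constant-coefficient operator of Lemma~\ref{lem:boundary} to the variable tensor $a^{ij}$ and the lower order terms produces only errors that are either absorbable on the left or estimable through the interior regularity of Lemmas~\ref{lem:reg1}--\ref{lem:reg2}. This is precisely why $a^{ij}$ is required to depend on the tangential variables alone: any dependence on $y_{n+1}$ would destroy the explicit modified-Bessel representation that underlies Lemma~\ref{lem:boundary}, and with it the whole reduction.
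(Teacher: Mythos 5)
Your proposal is correct and follows exactly the route the paper intends: localize with a radial cut-off, freeze the tangential coefficients $a^{ij}$ and apply the global estimates of Lemma~\ref{lem:boundary} and Lemma~\ref{lem:reg1} by superposition, absorb the frozen-coefficient error by covering with small balls, and then interpolate $\|g\|_{\dot H^{1-s}}\lesssim\epsilon^{1/(1-s)}\|g\|_{\dot H^1}+\epsilon^{-1/s}\|g\|_{L^2}$ via Young's inequality. The paper in fact compresses this entire argument into a single remark (``Here we made use of an interpolation inequality for the $\dot H^{1-s}$ semi-norm'') preceded by the sentence announcing the localization--perturbation step, so your write-up supplies more detail than the source, and the details (scale-invariance of the weighted estimates under the covering argument, the vanishing of $y_{n+1}^{1-2s}\p_{n+1}\eta$ at the boundary, the exponent bookkeeping in Young's inequality) all check out.
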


Here we made use of an interpolation inequality for the $\dot{H}^{1-s}$ semi-norm.\\

\subsection{Analyticity}
\label{sec:analytic}
Finally, we specialize the previous estimates to the situation of our equation. Assuming smoothness/analyticity of the coefficients then allows to deduce smoothness/analyticity of the solution with respect to the tangential directions.

\begin{prop}[Variable coefficient $L^2$ regularity estimate]
\label{prop:variablefull}
Let $w: M \times \R_+ \rightarrow \R$ be a (generalized) eigenfunction of (\ref{eq:CSE}), (\ref{eq:GDN}) with $\lambda=1$. Then we have
\begin{equation}
\begin{split}
\label{eq:combest}
\left\| \nabla' w \right\|_{L^2(B_{1})} + \left\| y_{n+1}^{\frac{1-2s}{2}} \nabla' \nabla w \right\|_{L^2(B_{1}^+)}
\lesssim \left\| y_{n+1}^{\frac{1-2s}{2}} w \right\|_{L^2(B_{2}^+)} + \left\| w \right\|_{L^2(B_{2})}.
\end{split}
\end{equation}
\end{prop}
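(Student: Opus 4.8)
The plan is to reduce the variable-coefficient estimate to the constant-coefficient (flat Laplacian) estimate of Lemma \ref{lem:boundary} by a perturbation/absorption argument, treating the lower-order terms $b\cdot\nabla w$, $cw$ and the deviation of the metric from the Euclidean one as right-hand sides. The key observation is that $w$ solves $\Lambda w = w$ (since $\lambda = 1$), so on $M\times\{0\}$ the Neumann trace $g := -\lim_{y_{n+1}\to 0} y_{n+1}^{1-2s}\p_{n+1}w$ equals (up to the constant $c_s$) the function $w$ itself; in particular $\|g\|_{\dot H^1(B_1)}$ is controlled by $\|\nabla' w\|_{L^2(B_1)}$, which is precisely one of the two quantities we want to bound. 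This is the mechanism through which the interpolation exponents $\epsilon^{1/(1-s)}$ and $\epsilon^{-1/s}$ in Lemma \ref{lem:boundary1} become usable: choosing $\epsilon$ small allows the $\dot H^1$-term on the right to be absorbed into the left-hand side.

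First I would localize: introduce a radial cutoff $\psi$ with $\psi\equiv 1$ on $B_1^+$ and $\supp\psi\subset B_2^+$, set $\tilde w := \psi w$, and compute the equation satisfied by $\tilde w$. Since the principal part is $\p_{n+1}y_{n+1}^{1-2s}\p_{n+1} + y_{n+1}^{1-2s}\D_g$, the commutator with $\psi$ produces terms supported in $B_2^+\setminus B_1^+$ that are of the schematic form $y_{n+1}^{1-2s}(\nabla\psi\cdot\nabla w + (\D\psi) w)$; these, together with $y_{n+1}^{1-2s}b\cdot\nabla\tilde w$ and $y_{n+1}^{1-2s}c\tilde w$, are absorbed into the right-hand side $f$ of an inhomogeneous problem. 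Applying Lemma \ref{lem:reg2} (the localized second-order estimate) controls $\|y_{n+1}^{\frac{1-2s}{2}}D^2\tilde w\|_{L^2(B_{1/2}^+)}$, and hence $\|y_{n+1}^{\frac{1-2s}{2}}\nabla'\nabla w\|_{L^2(B_{1/2}^+)}$, by $\|y_{n+1}^{\frac{2s-1}{2}}f\|_{L^2} + \|y_{n+1}^{\frac{1-2s}{2}}w\|_{L^2(B_2^+)}$; a Caccioppoli/elliptic estimate in the form of (\ref{eq:ellreg1}) upgrades the $\nabla w$ appearing in $f$ back to $w$ (using once more the boundary condition to absorb the resulting boundary integral), so the $\nabla'\nabla w$ half of (\ref{eq:combest}) follows after a standard covering argument to pass from $B_{1/2}^+$ to $B_1^+$ with enlarged right-hand side ball.

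Next I would handle $\|\nabla' w\|_{L^2(B_1)}$. Here I apply the localized boundary estimate Lemma \ref{lem:boundary1} to $\tilde w$, with $g$ replaced (via $\Lambda w = w$ and the cutoff) by $c_s^{-1}\tilde w|_{y_{n+1}=0}$ plus commutator terms; this gives
\[
\|y_{n+1}^{\frac{1-2s}{2}}\nabla\nabla'\tilde w\|_{L^2(B_{1/2}^+)} \lesssim \epsilon^{\frac{1}{1-s}}\|\nabla' w\|_{L^2(B_1)} + \epsilon^{-\frac{1}{s}}\|w\|_{L^2(B_1)} + \|y_{n+1}^{1-2s}w\|_{L^2(B_1^+)}.
\]
Then the Hardy-trace inequality (as used in Step 3 of the proof of Proposition \ref{prop:bdoubl}) bounds $\|\nabla' w\|_{L^2(B_{1/2})} \lesssim \|y_{n+1}^{\frac{1-2s}{2}}\nabla\nabla' w\|_{L^2(B_{1/2}^+)} + \|y_{n+1}^{\frac{1-2s}{2}}\nabla' w\|_{L^2(B_{1/2}^+)}$, the last term being controlled by the already-established $\nabla'\nabla w$ bound together with a Poincaré-type estimate. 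Choosing $\epsilon$ small enough to absorb the $\epsilon^{\frac{1}{1-s}}\|\nabla' w\|_{L^2}$ term into the left side closes the estimate, again after a finite covering to pass to $B_1$. The main obstacle I anticipate is bookkeeping the boundary contributions carefully: the commutator terms generated by localizing the weighted operator are not themselves compactly supported away from $\{y_{n+1}=0\}$, so one must check that the trace of $\nabla\psi\cdot\nabla w$ and of $(\D\psi)w$ on the boundary can be controlled — this is where the radial choice of $\psi$ is essential (normal derivatives falling on $\psi$ vanish, exactly as exploited in (\ref{eq:ellreg})), and where one has to be slightly careful that the absorption constant $\epsilon$ can be chosen uniformly, independently of $w$.
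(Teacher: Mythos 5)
Your plan follows the paper's strategy quite closely in spirit -- localize with a radial cutoff, invoke the bulk and boundary $H^2$-type estimates, use a Hardy-trace/Herbst-type inequality to control $\nabla' w$ on $\{y_{n+1}=0\}$ by the weighted bulk Hessian, and then interpolate and absorb -- but there is a concrete gap in your Phase~1. You propose to apply Lemma~\ref{lem:reg2} to $\tilde w=\psi w$, treating the cutoff commutators and lower-order terms as the source $f$. However, Lemma~\ref{lem:reg2} is stated only for the \emph{homogeneous} Neumann condition $\lim_{y_{n+1}\to 0}y_{n+1}^{1-2s}\p_{n+1}u=0$, and $\tilde w$ does not satisfy this: its Neumann trace is $-c_s^{-1}\psi w$, precisely the quantity you later need to control. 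The paper sidesteps this by first ``separating the equation into a boundary and a bulk problem'' -- writing $\eta w = w_{1}+w_{2}$ where $w_{1}$ carries the inhomogeneous Neumann data with zero interior source (to which Lemma~\ref{lem:boundary} applies) and $w_{2}$ carries the interior source with zero Neumann data (to which Lemma~\ref{lem:reg1} applies) -- and only then adding up the two estimates. Without this decomposition your first phase does not control $\|y_{n+1}^{(1-2s)/2}\nabla'\nabla w\|$ at all, since the Neumann data contributes a term of the same order.

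A second, more technical issue is the absorption at the end of your Phase~2: after plugging $g=c_s^{-1}w$ into Lemma~\ref{lem:boundary1} and combining with the Hardy-trace, you arrive at $\|\nabla'w\|_{L^2(B_{1/2})}\lesssim \epsilon^{1/(1-s)}\|\nabla'w\|_{L^2(B_{1})}+\ldots$, where the ball on the left is strictly smaller than the one on the right; choosing $\epsilon$ small does not permit direct absorption, and a ``finite covering'' does not repair this either, since the constant would blow up after iteration. The paper avoids this by working throughout with the compactly supported function $\eta w$ and the \emph{global} estimates on $\R^{n+1}_+$, so both sides of the interpolation step involve the same quantity $\|\eta w\|_{H^{1+s}(\R^n)}$ and absorption is legitimate. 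Finally, you should make explicit (the paper does) that the cutoff must be taken with sufficiently small support so that $\|\delta^{ij}-\tilde g^{ij}\|_{L^\infty(\supp\eta)}$ is small and the variable-coefficient perturbation $\p_i(\delta^{ij}-\tilde g^{ij})\p_j(\eta w)$ can be absorbed into the bulk Hessian on the left; this is where the argument genuinely uses that one can normalize $g_{ij}(0)=\delta^{ij}$ and work in a small geodesic neighbourhood.
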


\begin{proof}
For notational convenience we assume that $g_{ij}(0)=\delta^{ij}$. Using a radial cut-off function $\eta$ we rewrite our problem as
\begin{align*}
\p_{n+1} y_{n+1}^{1-2s} \p_{n+1} (\eta w) + y_{n+1}^{1-2s} \D' (\eta w) &= y_{n+1}^{1-2s} \p_i (\delta^{ij} - \tilde{g}^{ij}(y)) \p_j (\eta w) + l.o.t.\\
\lim\limits_{y_{n+1} \rightarrow  0 } y_{n+1}^{1-2s} \p_{n+1} (w\eta) &= \eta w,
\end{align*}
for $i\in \{1,...,n\}$.
Here $l.o.t.$ stands for terms of lower order, i.e. terms which involve at most a gradient. Separating the equation into a boundary and a bulk problem we infer from the global bulk and boundary estimates that
\begin{equation}
\label{eq:ana1}
\begin{split}
\left\| y_{n+1}^{\frac{1-2s}{2}} \nabla \nabla' (w\eta) \right\|_{L^2(\R^{n+1}_+)} \lesssim & \ \left\| y_{n+1}^{\frac{1-2s}{2}} \p_i (\delta^{ij} - \tilde{g}^{ij}(y))\p_j (\eta w) \right\|_{L^2(\R^{n+1}_+)} \\
&+ 
\left\| \eta w \right\|_{H^{1-s}(\R^n)} +
l.o.t.
\end{split}
\end{equation}
Using the Hardy-trace as well as Herbst's inequality (\cite{He}), we infer that the left hand side controls the $H^{s+1}$ norm of the boundary data:
\begin{align*}
\left\| \eta w \right\|_{H^{1+s}(\R^n)} \lesssim \left\| y_{n+1}^{\frac{1-2s}{2}} \nabla \nabla' (w\eta) \right\|_{L^2(\R^{n+1}_+)}.
\end{align*}
Thus, interpolation allows to estimate the boundary term on the right hand side by
\begin{align*}
\left\| \eta w \right\|_{H^{1-s}(\R^n)}  \leq \epsilon \left\| \eta w \right\|_{H^{1+s}(\R^n)} + C_{\epsilon} \left\| \eta w \right\|_{L^2(\R^n)}.
\end{align*}
This permits to absorb the higher order contribution into the left hand side of (\ref{eq:ana1}). Choosing the cut-off function with a sufficiently small support, it is moreover possible to also absorb the $ \left\| y_{n+1}^{\frac{1-2s}{2}} \p_i (\delta^{ij} - \tilde{g}^{ij}(y))\p_j (\eta w) \right\|_{L^2(\R^{n+1}_+)} $ contribution in the left hand side due to the continuity of $\tilde{g}^{ij}$.
This yields the desired estimate.
\end{proof}

Using the elliptic estimates (\ref{eq:ellreg1}) we can also formulate Proposition \ref{prop:variablefull} as
\begin{cor}[Variable coefficient $L^2$ regularity estimate]
\label{cor:variablefull}
Let $w: M \times \R_+ \rightarrow \R$ be a solution of (\ref{eq:CSE}), (\ref{eq:GDN}) with $\lambda=1$. Then we have
\begin{equation}
\begin{split}
\label{eq:combest1}
\left\| y_{n+1}^{\frac{1-2s}{2}} \nabla' \nabla w \right\|_{L^2(B_{1}^+)}
\lesssim \left\| y_{n+1}^{\frac{1-2s}{2}} w \right\|_{L^2(B_{3}^+)}.
\end{split}
\end{equation}
\end{cor}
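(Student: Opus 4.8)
The plan is to derive \eqref{eq:combest1} from the interior regularity bound of Proposition \ref{prop:variablefull} by replacing the boundary term $\left\| w \right\|_{L^2(B_{2})}$ appearing on the right-hand side of \eqref{eq:combest} with the weighted bulk quantity $\left\| y_{n+1}^{\frac{1-2s}{2}} w \right\|_{L^2(B_{3}^+)}$. The key point is that for an eigenfunction with $\lambda = 1$ the Caccioppoli-type estimate \eqref{eq:ellreg1} holds with all $\left\| V \right\|_{C^1}$-dependent prefactors collapsing into absolute constants (depending only on $s$, $\left\| b\right\|_{L^\infty}$, $\left\| c \right\|_{L^\infty}$), so one may freely trade bulk gradient norms for bulk $L^2$ norms on slightly larger balls.

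First I would observe that Proposition \ref{prop:variablefull}, applied after a standard covering and rescaling argument (cover $\overline{B_1^+}$ by finitely many geodesic half-balls on which the rescaled form of \eqref{eq:combest} is available), yields
\[
\left\| y_{n+1}^{\frac{1-2s}{2}} \nabla' \nabla w \right\|_{L^2(B_{1}^+)} \lesssim \left\| y_{n+1}^{\frac{1-2s}{2}} w \right\|_{L^2(B_{3/2}^+)} + \left\| w \right\|_{L^2(B_{3/2})}.
\]
It therefore suffices to bound $\left\| w \right\|_{L^2(B_{3/2})}$ by $\left\| y_{n+1}^{\frac{1-2s}{2}} w \right\|_{L^2(B_{3}^+)}$. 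To this end I would fix a radial cut-off $\varphi$ equal to one on $B_{3/2}^+$ and vanishing outside $B_2^+$, and apply the Hardy--trace inequality $\left\| w \right\|_{L^2(B_{3/2})} \lesssim \left\| y_{n+1}^{\frac{1-2s}{2}} \nabla(\varphi w) \right\|_{L^2(\R^{n+1}_+)}$ exactly as in Step 3 of the proof of Proposition \ref{prop:bdoubl}. The right-hand side is then controlled by the Caccioppoli estimate \eqref{eq:ellreg}, \eqref{eq:ellreg1}: since $w$ solves \eqref{eq:CSE}, \eqref{eq:GDN} with $V = \lambda = 1$, the degenerate normal-derivative boundary contribution in \eqref{eq:ellreg} is absorbed via the trace/interpolation inequality \eqref{eq:interpol} (with $\tau$ an absolute constant), leaving
\[
\left\| y_{n+1}^{\frac{1-2s}{2}} \nabla(\varphi w) \right\|_{L^2(B_2^+)} \lesssim \left\| y_{n+1}^{\frac{1-2s}{2}} w \right\|_{L^2(B_3^+)}.
\]
Chaining the three displays gives \eqref{eq:combest1}.

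The main obstacle is purely the bookkeeping of the interplay between boundary and bulk norms: one cannot estimate $\left\| w \right\|_{L^2(B_{3/2})}$ directly by a weighted bulk $L^2$ norm, but must first pass to the bulk gradient through the Hardy--trace inequality and then re-absorb that gradient through Caccioppoli — and Caccioppoli in turn generates its own boundary term through the weighted Neumann condition, which must again be handled by \eqref{eq:interpol}. With $\lambda$ normalized to $1$ each of these steps is routine and loses nothing, but one has to be careful that the nested enlargement of balls $B_{3/2}^+ \subset B_2^+ \subset B_3^+$ produced by the covering argument stays within the fixed radius $r_0(g)$ on which all the auxiliary estimates are valid.
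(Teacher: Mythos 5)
Your argument reproduces the paper's own proof: apply Proposition \ref{prop:variablefull}, convert the boundary term $\left\| w \right\|_{L^2}$ to a weighted bulk gradient via the Hardy--trace inequality (with a cut-off), and then absorb that gradient into a weighted bulk $L^2$ norm on a slightly larger half-ball through the Caccioppoli estimate \eqref{eq:ellreg1}, whose boundary contribution is handled by \eqref{eq:interpol}. The extra covering/rescaling preamble you add to shrink the radius of Proposition \ref{prop:variablefull} from $2$ to $3/2$ is unnecessary but harmless; otherwise the chain of estimates is essentially identical.
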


\begin{rmk}
\begin{itemize}
\item We remark that similar estimates hold in the case $\lambda \neq 1$. In this setting the respective constants depend polynomially on $\lambda$.
\item Also in the presence of inhomogeneities similar estimates as in Corollary \ref{cor:variablefull} remain true: If $w: \R^{n+1}_+ \rightarrow \R$ is compactly supported and satisfies (\ref{eq:CSE}), (\ref{eq:GDN}) with an inhomogeneity $f$, we have
\begin{align*}
\left\| y_{n+1}^{\frac{1-2s}{2}} \nabla' \nabla w \right\|_{L^2(\R^{n+1}_+)}
\lesssim \left\| y_{n+1}^{\frac{1-2s}{2}} w \right\|_{L^2(\R^{n+1}_+)} +\left\| y_{n+1}^{\frac{2s-1}{2}} f \right\|_{L^2(\R^{n+1}_+)}.
\end{align*}
\end{itemize}
\end{rmk}

\begin{proof}
We estimate $\left\| w \right\|_{L^2(B_{2})}$ by $\left\| y_{n+1}^{1-2s} \nabla w \right\|_{L^2(B_{5/2}^+)} + \left\| y_{n+1}^{1-2s} w \right\|_{L^2(B_{5/2}^+)}  $ via the Hardy-trace inequality and then use (\ref{eq:ellreg1}) to bound the gradient term via Cacciopolli's inequality.
\end{proof}

Due to the assumed smoothness of the metric, it follows from Proposition \ref{prop:variablefull} and a bootstrap argument that our solutions are smooth. Assuming the analyticity of all involved coefficients, it is possible to deduce analyticity of the restriction of $w$ to the manifold $(M,g)$.

\begin{cor}[Analyticity]
\label{cor:analytic}
Let $(M,g)$ be a real analytic manifold without boundary, let $w:M \times \R_+ \rightarrow \R$ be a (generalized) eigenfunction of (\ref{eq:CSE}), (\ref{eq:GDN}) with $\lambda=1$. Assume that all coefficient function involved in (\ref{eq:CSE}) are analytic. Then $w(y',0):M \rightarrow \R$ is real analytic and for any $p\in M$ we have the estimate
\begin{equation}
\label{eq:analytic}
|\nabla'^{\alpha} w(p,0) | \leq C C_s^k k!
\end{equation}
where $\alpha= (\alpha_1,...,\alpha_{n},0)$ is a (tangential) multi-index of length $k$.
\end{cor}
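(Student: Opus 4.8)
The plan is to run a standard analytic bootstrap of Morrey--Nirenberg type, using the variable coefficient $L^2$ regularity estimate of Proposition~\ref{prop:variablefull} (together with its localized companions, Lemma~\ref{lem:reg2} and Lemma~\ref{lem:boundary1}) as the fixed elliptic building block, and exploiting that all coefficients are real analytic, so that on a fixed neighbourhood of $p$ their tangential derivatives obey bounds of the form $|\nabla'^\beta a^{ij}(y)| \le C A^{|\beta|}\,|\beta|!$, and similarly for $b$ and $c$. First I would fix real analytic coordinates around $p$ in which the extension equation reads $\partial_{n+1}y_{n+1}^{1-2s}\partial_{n+1}w + y_{n+1}^{1-2s}\partial_i a^{ij}\partial_j w + y_{n+1}^{1-2s} b\cdot\nabla w + y_{n+1}^{1-2s}c\,w = 0$, with the boundary condition $\lim_{y_{n+1}\to0} y_{n+1}^{1-2s}\partial_{n+1}w = c_s^{-1} w$ on $\R^n\times\{0\}$. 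Smoothness of $w$ (which legitimizes all the manipulations below) follows by first bootstrapping Proposition~\ref{prop:variablefull}.

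Next, for a tangential multi-index $\alpha$ with $|\alpha|=k$, I would differentiate the equation $\alpha$ times in the tangential variables: $w_\alpha := \nabla'^\alpha w$ solves the same type of degenerate problem, now with an inhomogeneity $f_\alpha$ given by the Leibniz expansion, i.e.\ a finite sum over $0<\beta\le\alpha$ of products of $\beta$-th tangential derivatives of the (analytic) coefficients with tangential derivatives of $\nabla w$ of order at most $k-1$, together with an analogous lower-order boundary contribution. Applying the localized estimates~(\ref{eq:combest}) and~(\ref{eq:combest1}) to $w_\alpha$ on nested half-balls $B_{\rho_j}^+$, with radii $\rho_j$ spaced by $\sim 1/k$, and using the interpolation of the $\dot H^{1-s}$ boundary seminorm between $\dot H^{1}$ and $L^2$ indicated after Lemma~\ref{lem:boundary1}, I would reduce the control of the $k$-th tangential derivative to control of derivatives of order $\le k-1$, multiplied by the analytic coefficient bounds and by factors produced by the cut-off functions.

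The quantitative induction hypothesis I propose to carry is the ansatz
\[
\left\| y_{n+1}^{\frac{1-2s}{2}}\nabla'^\alpha \nabla w\right\|_{L^2(B_\rho^+)} + \left\|\nabla'^\alpha w\right\|_{L^2(B_\rho)} \le C\,C_s^{k}\,k!\,(1-\rho)^{-k}
\]
for all $\tfrac12\le\rho<1$ and all tangential $\alpha$ with $|\alpha|=k$, where $C$ and $C_s$ depend only on $(M,g)$, $s$, $\|b\|_{L^\infty}$, $\|c\|_{L^\infty}$ and the analyticity radius $A$ of the coefficients. The inductive step is the familiar combinatorial estimate: one distributes the $k$ tangential derivatives through the Leibniz sums, uses factorial identities such as $\binom{k}{j}j!(k-j)! = k!$, chooses the radius loss $1-\rho_j \sim 1/k$ so that the polynomial factors coming from derivatives of the cut-off functions are absorbed, and fixes $C_s$ large relative to $A$ and to the constants in Proposition~\ref{prop:variablefull}. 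Finally, to pass from this $L^2$ statement to the pointwise bound~(\ref{eq:analytic}), I would apply the Hardy--trace inequality together with the Sobolev/Morrey embedding on the boundary slice to control $\|\nabla'^\alpha w(\cdot,0)\|_{L^\infty}$ near $p$ by a fixed number of further tangential derivatives of the already-estimated weighted bulk quantities, and then evaluate at $p$; the extra derivatives only change $C$ and $C_s$ by fixed factors.

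I expect the main obstacle to be the bookkeeping of constants in the \emph{degenerate} setting. One must verify that the constants in Lemmas~\ref{lem:reg1}--\ref{lem:boundary1} and in Proposition~\ref{prop:variablefull} do not deteriorate under iteration on shrinking half-balls, and in particular that the fractional boundary norm $\dot H^{1-s}$ interpolates between $\dot H^1$ and $L^2$ with constants harmless for the factorial ansatz. Moreover, the weight $y_{n+1}^{1-2s}$ forces us to keep the normal direction only at the level of the weighted $H^2$ space, so the analytic estimates are genuinely tangential: only tangential derivatives ever fall on the coefficients, while exactly one (weighted) normal derivative is carried along throughout, and threading this asymmetry through the Leibniz expansion is where the argument requires care. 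Since this circle of ideas is classical --- it is the analogue of the elliptic-iterates argument used for $s=\tfrac12$ in Proposition~\ref{prop:analyt} --- I would only indicate these points and refer to the standard references for the purely combinatorial details.
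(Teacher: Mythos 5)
Your proposal is sound and lands on the same endpoint, but it uses a genuinely different bookkeeping device than the paper. You run the classical Morrey--Nirenberg scheme: an $L^2$ induction hypothesis of the form $\bigl\| y_{n+1}^{\frac{1-2s}{2}}\nabla'^\alpha\nabla w\bigr\|_{L^2(B_\rho^+)} + \|\nabla'^\alpha w\|_{L^2(B_\rho)} \le C C_s^k k!\,(1-\rho)^{-k}$ over nested half-balls with radius losses $\sim 1/k$, and then you add a few extra tangential derivatives at the end for the Morrey embedding. The paper instead uses Kato's trick: a \emph{single} fixed radial cut-off $\varphi$, raised to the power $k$, and an induction hypothesis $\bigl\|\varphi^k y_{n+1}^{\frac{1-2s}{2}}\nabla\nabla'^\alpha w\bigr\|_{H^m_{\tan}(B_1^+(p))}\le C C_s^k k!$ with $m=\lfloor n/2\rfloor+1$, so that the inductive step amounts to estimating the commutators $[\nabla'^\beta\nabla,\varphi^{k+1}]$ and $[L_s,\varphi^{k+1}]$, whose coefficients are polynomial in $k$ and hence absorbed by the factorial gain. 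The paper's variant sidesteps the issue you flag yourself, namely that Proposition~\ref{prop:variablefull} is stated at a fixed pair of scales and must be re-derived (or carefully scaled) on each shrinking annulus, and it bakes the Sobolev index $m$ into the ansatz from the start so that Morrey's inequality applies directly. Your route works if the scale-dependence of the constants in Lemmas~\ref{lem:reg1}--\ref{lem:boundary1}, and in particular of the interpolation of $\dot H^{1-s}$ between $\dot H^1$ and $L^2$, is tracked explicitly through the annuli; the Kato-trick formulation makes that tracking trivial by never changing the domain. Both approaches are correct and, modulo this combinatorial packaging, equivalent.
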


\begin{proof}
We prove the corollary by applying Proposition \ref{prop:variablefull} inductively. Combined with Morrey's inequality, this allows to deduce the strong pointwise estimates which we claimed in the corollary. We begin with an iteration of the $L^2$-based bounds. Using Kato's trick \cite{Kato96}, choosing a multi-index $\alpha$ of length $|\alpha|=k$ and setting $m = \floor*{\frac{n}{2}}+1$ , we claim that
\begin{equation}
\label{eq:anaind}
\begin{split}
\left\| \varphi^{k} y_{n+1}^{\frac{1-2s}{2}}\nabla \nabla'^{\alpha} w \right\|_{H^m_{\tan}(B_{1}^+(p))}
\leq C C_s^k k!,
\end{split}
\end{equation}
where $\varphi$ is a radial cut-off function which is supported in $B_{1}^+(p)$ satisfies $0<\varphi(y)<1$ as well as $\varphi(y)=1$ in a neighbourhood of $B_{\frac{1}{2}}^+(p)$. Here $H^m_{\tan}$ denotes the usual Sobolev space with respect to the tangential variables.\\
We prove (\ref{eq:anaind}) inductively. As for a sufficiently large constant $C$ the beginning of the induction is given by (an iteration of) Proposition \ref{prop:variablefull}, it remains to provide an argument for the induction step. 
Using the abbreviation 
\begin{align*}
L_s w = \p_{n+1}y_{n+1}^{1-2s}\p_{n+1}w + y_{n+1}^{1-2s} \D_g w + y_{n+1}^{1-2s} b\cdot \nabla w + y_{n+1}^{1-2s} c w,
\end{align*}
and choosing (tangential) multi-indeces $\alpha$ with $|\alpha|=k$, $\beta$ with $|\beta|=1$, we estimate
\begin{equation}
\label{eq:comm}
\begin{split}
&\left\| \varphi^{k+1} y_{n+1}^{\frac{1-2s}{2}}  \nabla (\nabla')^{\alpha+\beta} w \right\|_{H^m_{\tan}(\R^{n+1}_+)} 
 = \left\| \varphi^{k+1} y_{n+1}^{\frac{1-2s}{2}}  \nabla \nabla'^{\beta} (\nabla')^{\alpha} w \right\|_{H^m_{\tan}(\R^{n+1}_+)}  \\
&\leq  \left\|  y_{n+1}^{\frac{1-2s}{2}} \nabla'^{\beta} \nabla \varphi^{k+1} (\nabla')^{\alpha} w \right\|_{H^m_{\tan}(\R^{n+1}_+)} 
+ \left\|  y_{n+1}^{\frac{1-2s}{2}}[ \nabla'^{\beta} \nabla, \varphi^{k+1}] (\nabla')^{\alpha} w \right\|_{H^m_{\tan}(\R^{n+1}_+)} \\
&\leq \left\|  y_{n+1}^{\frac{2s-1}{2}} L_s \varphi^{k+1} (\nabla')^{\alpha} w \right\|_{H^m_{\tan}(\R^{n+1}_+)} 
 + \left\|  y_{n+1}^{\frac{1-2s}{2}}[ \nabla'^{\beta} \nabla, \varphi^{k+1}] (\nabla')^{\alpha} w \right\|_{H^m_{\tan}(\R^{n+1}_+)} \\
&\leq \left\|  y_{n+1}^{\frac{2s-1}{2}} \varphi^{k+1}  L_s (\nabla')^{\alpha} w \right\|_{H^m_{\tan}(\R^{n+1}_+)} +
\left\|  y_{n+1}^{\frac{2s-1}{2}} [L_s, \varphi^{k+1}] (\nabla')^{\alpha} w \right\|_{H^m_{\tan}(\R^{n+1}_+)} \\
& \quad+ \left\|  y_{n+1}^{\frac{1-2s}{2}}[ \nabla'^{\beta} \nabla, \varphi^{k+1}] (\nabla')^{\alpha} w \right\|_{H^m_{\tan}(\R^{n+1}_+)}.
\end{split}
\end{equation}
Computing the commutator contributions and taking into account the support properties of $\varphi$ and $\varphi'$, we may apply the inductive hypothesis to the last two contributions which yields control over these. Indeed, the first commutator amounts to
\begin{align*}
[\nabla'^{\beta} \nabla ,\varphi^{k+1} ] = & \ (k+1)[\varphi^k(\nabla \varphi)\nabla'^{\beta} + \varphi^k(\nabla'^{\beta} \varphi)\nabla + k\varphi^{k-1}(\nabla'^{\beta}\varphi )(\nabla \varphi) \\
&+ (k+1)\varphi^{k-1} \varphi(\nabla \nabla'^{\beta}\varphi)]
\end{align*}
Thus,
\begin{align*}
&\left\|  y_{n+1}^{\frac{1-2s}{2}}[ \nabla'^{\beta} \nabla, \varphi^{k+1}] (\nabla')^{\alpha} w \right\|_{H^m_{\tan}(\R^{n+1}_+)}\\
&\leq 2(k+1) \left\| \nabla \varphi \right\|_{L^{\infty}} \left\| \varphi^{k}  y_{n+1}^{\frac{1-2s}{2}}  \nabla (\nabla')^{\alpha} w \right\|_{H^m_{\tan}(\R^{n+1}_+)}\\
& \quad + (k+1)k\left\| \nabla \varphi \right\|_{L^{\infty}}^2 \left\|  \varphi^{k-1} y_{n+1}^{\frac{1-2s}{2}} \nabla (\nabla')^{\alpha-\beta} w \right\|_{H^m_{\tan}(\R^{n+1}_+)}\\
& \quad + (k+1)\left\| \varphi \nabla^2 \varphi \right\|_{L^{\infty}}^2 \left\| \varphi^{k-1} y_{n+1}^{\frac{1-2s}{2}} \nabla (\nabla')^{\alpha-\beta} w \right\|_{H^m_{\tan}(\R^{n+1}_+)}\\
&\leq C(k+1)! C_{s}^k.
\end{align*}
We argue similarly for the other commutator contribution. Here we take into account that, due to the radial dependence of $\varphi$,
\begin{align*}
|y_{n+1}^{-2s} \p_{n+1}\varphi| \lesssim |y_{n+1}^{1-2s}\varphi' |y|^{-1}| \lesssim |y_{n+1}^{1-2s}\varphi'| .
\end{align*}
Therefore it remains to deal with the first contribution in (\ref{eq:comm}). We note that $\nabla'^{\alpha }w$ satisfies 
\begin{align*}
\p_{n+1} y_{n+1}^{1-2s} \p_{n+1} \nabla'^{\alpha} w + y_{n+1}^{1-2s} \D_g \nabla'^{\alpha} w &= \sum\limits_{\beta \leq \alpha} {\alpha \choose \beta} y_{n+1}^{1-2s} \nabla'^{\beta} f(g) \nabla'^{\alpha- \beta} u \\
&+ l.o.t.,\\
-\lim\limits_{y_{n+1}\rightarrow 0} y_{n+1}^{1-2s} \p_{n+1} \nabla'^{\alpha} w &= \nabla'^{\alpha} w.
\end{align*}
where $\beta$, $\alpha - \beta$  are a (tangential) multi-indeces and $f$ denotes an analytic function of the metric $g^{ij}$. Here we only explicitly deal with the lower order terms originating from the metric, the terms which originate from the lower order contributions in (\ref{eq:CSE}) can be dealt with analogously.\\
Due to the analyticity of $g$, we have
\begin{align*}
|\varphi^\beta \nabla'^{\beta} f(g)| \leq C |\beta|! R^{|\beta|}
\end{align*}
for some $R\in \R$.
Thus, using Proposition \ref{prop:variablefull} and the explicit form of the right hand side, we deduce 
\begin{align*}
\left\| y_{n+1}^{\frac{2s-1}{2}} \varphi^{k+1} L_s(\nabla')^{\alpha}w \right\|_{H^m_{\tan}(\R^{n+1}_+)}  
&\leq  C\sum\limits_{\beta \leq \alpha}{\alpha \choose \beta } |\beta|!R^{|\beta|}  \left\|y_{n+1}^{1-2s} \varphi^{|\alpha - \beta|} \nabla'^{\alpha-\beta}w\right\|_{H^m_{\tan}(\R^{n+1}_+)} \\
&\leq  C\sum\limits_{\beta \leq \alpha}{\alpha \choose \beta } |\beta|!R^{|\beta|} C|\alpha- \beta|! C_s^{|\alpha- \beta|}\\
&\leq C C_s^{|\alpha|} |\alpha|!.
\end{align*}
Here we first used the equation satisfied by $\nabla'^{\alpha} w$ and then applied the inductive hypothesis assuming that the constants $C_s, C$ in (\ref{eq:anaind}) are chosen sufficiently large. This implies the claim.\\
Using (\ref{eq:anaind}) in combination with Morrey's inequality then permits to deduce pointwise analytic bounds.
\end{proof}

\subsection*{Acknowledgments}
I would like to thank Herbert Koch, Yannick Sire, Stefan Steinerberger and Christian Zillinger for stimulating discussions related to this article. Moreover, I gratefully acknowledge funding from the European Research Council under the European Union’s Seventh Framework Programme (FP7/2007-2013) / ERC grant agreement no. 291053.

\bibliography{citations}
\bibliographystyle{alpha}
\end{document}